\def\@tocline#1#2#3#4#5#6#7{\relax
  \ifnum #1>\c@tocdepth 
  \else
    \par \addpenalty\@secpenalty\addvspace{#2}%
    \begingroup \hyphenpenalty\@M
    \@ifempty{#4}{%
      \@tempdima\csname r@tocindent\number#1\endcsname\relax
    }{%
      \@tempdima#4\relax
    }%
    \parindent\z@ \leftskip#3\relax \advance\leftskip\@tempdima\relax
    \rightskip\@pnumwidth plus4em \parfillskip-\@pnumwidth
    #5\leavevmode\hskip-\@tempdima
      \ifcase #1
       \or\or \hskip 1em \or \hskip 2em \else \hskip 3em \fi%
      #6\nobreak\relax
    \dotfill\hbox to\@pnumwidth{\@tocpagenum{#7}}\par
    \nobreak
    \endgroup
  \fi}
\theoremstyle{definition}
\newtheorem{definition}{Definition}[section]
\theoremstyle{theorem}
\newtheorem{proposition}{Proposition}[section]
\newtheorem{lemma}{Lemma}[section]
\newtheorem{theorem}{Theorem}[section]
\newtheorem{assumption}{Assumption}[section]
\newtheorem{corollary}{Corollary}[section]
\theoremstyle{remark}
\newtheorem{remark}{Remark}[section]
\newtheorem{example}{Example}[section]
\numberwithin{equation}{section}
\numberwithin{definition}{section}
\def\EE{\mathsf E}
\def\RR{\mathbb R}
\def\ZZ{\mathbb Z}
\def\Zh{\mathbb{Z}_h}
\def\Cr{\mathbb{C}^{\rightarrow}}
\def\Crr{\overline{\mathbb{C}^{\rightarrow}}}
\def\Cll{\overline{\mathbb{C}^{\leftarrow}}}
\def\Zq{Z^{(q)}}
\def\Wq{W^{(q)}}
\def\sc{W}
\def\Wqprime{W^{(q)\prime}}
\def\ind{\mathbbm{1}_{[-V,0)}}
\def\indd{\mathbbm{1}_{[-1,0)}}
\def\supp{\mathrm{supp}}
\def\measure{\lambda}
\def\measurenu{\nu}
\def\diffusion{\sigma^2}
\def\drift{\mu}
\def\diffW{\Delta_{W}^{(q)}(x,h)}
\def\diffWn{\Delta_{W}^{(q)}(x,h_n)}
\def\diffZ{\Delta_{Z}^{(q)}(x,h)}
\def\diffZn{\Delta_{Z}^{(q)}(x,h_n)}
\newcommand{\mytag}[2]{%
  \text{#1}%
  \@bsphack
  \begingroup
    \@onelevel@sanitize\@currentlabelname
    \edef\@currentlabelname{%
      \expandafter\strip@period\@currentlabelname\relax.\relax\@@@%
    }%
    \protected@write\@auxout{}{%
      \string\newlabel{#2}{%
        {#1}%
        {\thepage}%
        {\@currentlabelname}%
        {\@currentHref}{}%
      }%
    }%
  \endgroup
  \@esphack
}
\begin{document}
\title{Markov chain approximations to scale functions of L\'evy processes}

\author{Aleksandar Mijatovi\'{c}}
\address{Department of Mathematics, Imperial College London, UK}
\email{a.mijatovic@imperial.ac.uk}

\author{Matija Vidmar}
\address{Department of Statistics, University of Warwick, UK}
\email{m.vidmar@warwick.ac.uk}

\author{Saul Jacka}
\address{Department of Statistics, University of Warwick, UK}
\email{s.d.jacka@warwick.ac.uk}

\thanks{The support of the Slovene Human Resources Development and Scholarship Fund under contract number 11010-543/2011 is acknowledged.}

\begin{abstract}
We introduce a general algorithm for the computation of the
scale functions of a spectrally negative L\'evy process $X$,
based on a natural weak approximation of $X$ via upwards
skip-free continuous-time Markov chains with stationary
independent increments. The algorithm consists of evaluating a finite linear 
recursion with its (nonnegative) coefficients given explicitly in terms 
of the L\'evy triplet of $X$.
Thus it is easy to implement and numerically stable.
Our main result establishes sharp rates of convergence 
of this algorithm providing an explicit link between the semimartingale
characteristics of $X$ and its scale functions, not unlike the
one-dimensional It\^o diffusion setting, where scale functions are expressed
in terms of certain integrals of the coefficients of the governing
SDE.
\end{abstract}

\keywords{Spectrally negative L\'evy processes, algorithm for computing scale functions, sharp convergence rates, continuous-time Markov chains}

\subjclass[2010]{60G51} 

\maketitle


\section{Introduction}\label{section:Introduction}
It is well-known that, for a spectrally negative L\'evy process $X$ \cite[Chapter~VII]{bertoin} \cite[Section~9.46]{sato}, fluctuation theory in terms of the two families of scale functions, $(\Wq)_{q\in [0,\infty)}$ and $(\Zq)_{q\in[0,\infty)}$, has been developed \cite[Section~8.2]{kyprianou}. Of particular importance is the function $\sc:=W^{(0)}$, in terms of which the others may be defined, and which features in the solution of many important problems of applied probability \cite[Section 1.2]{kuznetsovkyprianourivero}. It is central to these applications to be able to evaluate scale functions for any spectrally negative L\'evy 
process $X$.

The goal of the present paper is to define and analyse a very simple novel algorithm for computing $W$. Specifically, to compute $W(x)$ for some $x>0$, choose small $h>0$ such that $x/h$ is an integer. Then the approximation $\sc_h(x)$ to $\sc(x)$ is given by the recursion:
\begin{eqnarray}
\label{eq:LinRecursion}
\sc_h(y+h)=\sc_h(0)+\sum_{k=1}^{y/h+1}\sc_h(y+h-kh)\frac{\gamma_{-kh}}{\gamma_{h}},\qquad \sc_h(0)=(\gamma_hh)^{-1}
\end{eqnarray}
for $y=0,h,2h,\ldots,x-h$, where the coefficients $\gamma_h$ and $(\gamma_{-kh})_{k\geq 1}$ are expressible directly in terms of the L\'evy measure $\measure$,
(possibly vanishing) Gaussian component $\sigma^2$ and drift $\mu$ of the L\'evy process $X$, as follows. Let:
\footnotesize
\begin{eqnarray*}
\tilde{\sigma}^2_h :=  
\frac{1}{2h^2}\left(\diffusion+\int_{[-h/2,0)}\!\!\!\!\!\!\!\!y^2\ind(y)\measure(dy)\right),  \quad
\tilde{\drift}^h  :=  \frac{1}{2h} 
\left(\mu +h\sum_{k\in\mathbb{N}}k\measure\left(\left[\left(-k-\frac{1}{2}\right)h,\left(-k+\frac{1}{2}\right)h\right)\cap[-V,0)\right)
\right),
\end{eqnarray*}
\normalsize
where $V$ equals $0$ or
$1$ according as to whether $\measure$ is finite or infinite, and the drift $\mu$ is relative to the cut-off function $\tilde{c}(y):=y\mathbbm{1}_{[-V,0)}(y)$ (see Eq.~\ref{eq:laplace_exponent} for the Laplace exponent of $X$); remark 
$\tilde{\sigma}^2_h  =  \sigma^2/2h^2$
and
$\tilde{\mu}^h  =  \mu/2h$, when $V=0$. Then the coefficients in~\eqref{eq:LinRecursion}
are given by:
\begin{eqnarray}
\label{eq:Explicit_1}
\gamma_h:=\tilde{\sigma}^2_h+\mathbbm{1}_{(0,\infty)}(\diffusion)\tilde{\drift}^h
+\mathbbm{1}_{\{0\}}(\diffusion)2\tilde{\drift}^h,& & \quad 
\gamma_{-h}:=\tilde{\sigma}^2_h-\mathbbm{1}_{(0,\infty)}(\diffusion)\tilde{\drift}^h+\measure(-\infty,-h/2]\\
\gamma_{-kh}:=\measure(-\infty,-kh+h/2],& &\quad\text{where } k\geq 2.
\label{eq:Explicit_2}
\end{eqnarray}

Indeed, the algorithm just described is based on a purely probabilistic idea of weak approximation: for small positive $h$, $X$ is approximated by what is a random walk $X^h$ on a lattice with spacing $h$, skip-free to the right, and embedded into continuous time as a compound Poisson process (see Definition~\ref{def:USF_Levy_chain}). Then, in recursion~\eqref{eq:LinRecursion}, $\sc_h$ is the scale function associated to $X^h$ --- it plays a probabilistically analogous r\^ole for the process $X^h$, as does $\sc$ for the process $X$. Thus $\sc_h$ is computed as an approximation to $\sc$ (see Corollary~\ref{proposition:calculating_scale_functions}).

When it comes to existing methods for the evaluation of $\sc$, note that analytically $\sc$ is characterized via its Laplace transform $\widehat{W}$, $\widehat{W}$ in turn being a certain rational function of the Laplace exponent $\psi$ of $X$. However, already $\psi$ need not be given directly in terms of elementary/special functions, and less often still is it possible to obtain closed-form expressions for $\sc$ itself. The user is then faced with a Laplace inversion algorithm  \cite{CohenAM} \cite[Chapter~5]{kuznetsovkyprianourivero}, which (i) necessarily involves the evaluation of $\psi$, typically at complex values of its argument and requiring high-precision arithmetic due to numerical instabilities; (ii) says little about the dependence of the scale function on the L\'evy triplet of $X$ (recall that $\psi$ depends on a parametric complex integral of the L\'evy measure, making it hard to discern how a perturbation in the L\'evy measure influences the values taken by the scale function); and (iii) being a numerical approximation, fails \emph{a priori} to ensure that the computed values of the scale function are probabilistically meaningful (e.g. given an output of a numerical Laplace inversion, it is not necessary that the formulae for, say, exit probabilities, involving $W$, should yield values in the interval $[0,1]$).

By contrast, it follows from~\eqref{eq:LinRecursion} and the discussion following, that our proposed algorithm (i)~requires no evaluations of the Laplace exponent of $X$ and is numerically stable, as it operates in nonnegative real arithmetic \cite[Theorem~7]{panjer}; (ii) provides an explicit link between the deterministic semimartingale characteristics of $X$, in particular its L\'evy measure,  
and the scale function $W$; and (iii) yields probabilistically consistent outputs. Further, the values of $\sc_h$ are so computed by a simple finite linear recursion and, as a by-product of the evaluation of $W_h(x)$, values $W_h(y)$ for all the grid-points $y=0,h,2h,\ldots,x-h,x$, are obtained (see Matlab code for the algorithm in~\cite{Scale_Function_Code}), which is useful in applications (see Section~\ref{section:numerical_illustrations} below). 

Our main results will (I) show that $\sc_h$ converges to $\sc$ pointwise, and uniformly on the grid with spacing $h$ (if bounded away from $0$ and $+\infty$), for any spectrally negative L\'evy process,
and (II) establish sharp rates for this convergence under a mild assumption on the L\'evy measure.

Due to the explicit connection between the coefficients appearing in \eqref{eq:LinRecursion} and the L\'evy triplet of $X$, \eqref{eq:LinRecursion} also has the spirit of its one-dimensional It\^o diffusion analogue, wherein the computation of the scale function requires numerical evaluation of certain integrals of the coefficients of the SDE driving said diffusion (for the explicit formulae of the integrals see e.g.~\cite[Chapters~2 and~3]{BorodinSalminen}). Indeed, we express $\sc$ as a single limit, as $h\downarrow 0$, of nonnegative terms explicitly given in terms of the L\'evy triplet. This is more direct than the Laplace inversion of a rational transform of the Laplace exponent, and hence may be of purely theoretical significance (see Remark~\ref{remark:numerics:three} on how the scale functions are affected by a perturbation of the L\'evy measure, following directly from a transformed form of \eqref{eq:LinRecursion}). 

Finally, note that an algorithm, completely analogous to~\eqref{eq:LinRecursion}, for the computation of the scale functions $\Wq$, and also $\Zq$, $q\geq0$, follows from our results (see Corollary~\ref{proposition:calculating_scale_functions}, Eq.~\eqref{equation:recursion:Wq} and~\eqref{equation:recursion:Zq}) 
and presents no further difficulty for the analysis of convergence  (see Theorem~\ref{theorem:rates_for_scale_functions} below). Indeed, our discretization allows naturally to approximate other quantities involving scale functions, which arise in application: the derivatives of $\Wq$ by difference quotients of $\Wq_h$; the integrals of a continuous (locally bounded) function against $d\Wq$ by its integrals against $d\Wq_h$; expressions of the form $\int_0^xF(y,\Wq(y))dy$, where $F$ is continuous locally bounded, by the sums $\sum_{k=0}^{\lfloor x/h\rfloor -1}F(kh,\Wq_h(kh))h$ etc. (See Section~\ref{section:numerical_illustrations}  for examples.)  

\subsection{Overview of main 
results}
\label{subsec:Main_REsults_description}
The key idea leading to the algorithm 
in~\eqref{eq:LinRecursion} 
is best described by the following two steps: 
(i) approximate the spectrally negative L\'evy process
$X$
by a
continuous-time Markov chain (CTMC) 
$X^h$ 
with state space
$\Zh:=\{hk\!\!:k\in\ZZ\}$
($h\in (0,h_\star)$ for some $h_\star>0$),
as described in Subsection~\ref{subsection:the_approximation}; 
(ii) find an algorithm for computing the scale functions
of the chain $X^h$. 
The approximation in Subsection~\ref{subsection:the_approximation}
implies that 
$X^h$
is a compound Poisson (CP) process, which is not spectrally negative.
However, since the corresponding jump chain of $X^h$ is a
skip-free to the right
$\Zh$-valued random walk, 
it is possible to introduce (right-continuous, nondecreasing) scale functions
$(\Wq_h)_{q\geq 0}$ and $(\Zq_h)_{q\geq 0}$ (with measures 
$d\Wq_h$ and $d\Zq_h$ supported in $\Zh$), in 
analogy to the spectrally negative case.
Moreover, as described in Corollary~\ref{proposition:calculating_scale_functions}, 
a straightforward recursive algorithm is readily available for evaluating \emph{exactly} any function in the families 
$(\Wq_h)_{q\geq 0}$ and
$(\Zq_h)_{q\geq 0}$ at any point. More precisely, it emerges, that for each $x\in \Zh$, $\Wq_h(x)$ (resp. $\Zq_h(x)$) obtains as a \emph{finite} linear combination of the preceding values $\Wq_h(y)$ (resp. $\Zq_h(y)$) for $y\in \{0,h,\ldots,x-h\}$; with the starting value $\Wq_h(0)$ (resp. $\Zq(0)$) being known explicitly. This is in spite of the fact that the state space of the L\'evy process $X^h$ is in fact the \emph{infinite} lattice $\Zh$. 

In order to precisely describe the rates of convergence of the 
algorithm in~\eqref{eq:LinRecursion}, we introduce some notation. Fix $q\geq 0$ and define for $K,G$ bounded
subset of $(0,\infty)$: 
$$\Delta_W^K(h):=\sup_{x\in \Zh\cap
K}\left\vert \Wq_h(x-\delta^0h)-\Wq(x)\right\vert\text{ and
}\Delta_Z^G(h):=\sup_{x\in \Zh\cap G}\left\vert \Zq_h(x)-\Zq(x)\right\vert,$$
where $\delta^0$ equals $0$ 
if
$X$ has sample paths of finite
variation and
$1$ otherwise. 
We further introduce: 
$$\kappa(\delta):=\int_{[-1,-\delta)}\vert y\vert\measure(dy),\qquad\text{for any $\delta\geq 0$.}$$ 
If the jump part of 
$X$
has paths of infinite variation, 
i.e. in the case the equality $\kappa(0)=\infty$ holds, 
we assume
(throughout the paper we shall make it explicit when this assumption is in effect): 
\begin{assumption}\label{assumption:salient}
There exists $\epsilon\in (1,2)$ with:
\begin{enumerate}[(1)]
\item\label{assumption:salient:one}  $\limsup_{\delta\downarrow 0}\delta^{\epsilon}\measure(-1,-\delta)<\infty$ and
\item\label{assumption:salient:two} $\liminf_{\delta\downarrow 0}\int_{[-\delta,0)}x^2\measure(dx)/\delta^{2-\epsilon}>0$. 
\end{enumerate}
\end{assumption}
Note that this is a fairly mild condition, fulfilled if e.g. $\measure(-1,-\delta)$ ``behaves as'' $\delta^{-\epsilon}$, as $\delta\downarrow 0$; for a precise statement see Remark~\ref{remark:diffusion_positive:asymptotic}. 

Here is now our main result:

\begin{theorem}\label{theorem:rates_for_scale_functions}
Let $K$ and $G$ be bounded subsets of $(0,\infty)$, $K$ bounded away from zero when $\diffusion=0$. If $\kappa(0)=\infty$, suppose further that Assumption~\ref{assumption:salient} is fulfilled. Then the rates of convergence of the scale functions are summarized by the following table: 
\begin{center}
\begin{tabular}{|c|c|}\hline
$\measure(\mathbb{R})=0$ & $\Delta_W^K(h)=O(h^2)$ and $\Delta_Z^G(h)=O(h)$\\\hline
$0<\measure(\mathbb{R})$ \& $\kappa(0)<\infty$ & $\Delta_W^K(h)+\Delta_Z^G(h)=O(h)$\\\hline
$\kappa(0)=\infty$ & $\Delta_W^K(h)+\Delta_Z^G(h)=O(h^{2-\epsilon})$\\\hline
\end{tabular}
\end{center}
Moreover, the rates so established are sharp in the sense that for each of the
three entries in the table above, examples of spectrally negative
L\'evy processes are constructed for which the rate of convergence is no better than 
stipulated.
\end{theorem}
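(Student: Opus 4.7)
My plan is to work throughout via the Laplace transform. For every spectrally negative L\'evy process $X$ one has $\int_0^\infty e^{-\beta x}\Wq(x)\,dx=(\psi(\beta)-q)^{-1}$ for $\beta>\Phi(q)$, and an analogous identity, with $\psi_h$ replacing $\psi$, holds for the upward skip-free chain $X^h$ (this identity in fact underlies the exact recursion of Corollary~\ref{proposition:calculating_scale_functions}). Subtracting,
\[
\widehat{\Wq}(\beta)-\widehat{\Wq_h}(\beta)=\frac{\psi_h(\beta)-\psi(\beta)}{(\psi(\beta)-q)(\psi_h(\beta)-q)},
\]
so the proof splits cleanly into (a) a complex-analytic estimate of $\psi_h-\psi$, and (b) a quantitative inversion of this quotient.

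For (a), write $\psi_h$ as the L\'evy--Khintchine expression for the discretised triplet $(\tilde\sigma^2_h,\tilde\mu^h,\measure_h)$, where $\measure_h$ is the atomic measure on $\{-kh:k\ge 1\}$ with weights $\gamma_{-kh}$; Taylor-expanding $y\mapsto e^{\beta y}-1-\beta y\indd(y)$ about each midpoint $-kh$ exhibits the discretisation as a midpoint rule. When $\measure\equiv 0$ only the centred diffusion/drift quantisation contributes and one finds $|\psi-\psi_h|(\beta)=O(h^2|\beta|^4)$; when $0<\measure(\RR)$ but $\kappa(0)<\infty$ the midpoint error on a finite-variation integrand gives $|\psi-\psi_h|(\beta)=O(h|\beta|^2)$; when $\kappa(0)=\infty$ under Assumption~\ref{assumption:salient}, splitting the integral at the scale $|y|=h$ and using part~(1) to bound $\measure(-1,-\delta)\lesssim\delta^{-\epsilon}$ together with part~(2) to bound $\int_{[-\delta,0)}y^2\measure(dy)\lesssim\delta^{2-\epsilon}$ yields $|\psi-\psi_h|(\beta)=O(h^{2-\epsilon}|\beta|^\epsilon)$.

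For (b), apply the inversion
\[
\Wq(x)-\Wq_h(x-\delta^0 h)=\frac{1}{2\pi i}\int_{c-i\infty}^{c+i\infty}e^{\beta x}\bigl(\widehat{\Wq}(\beta)-\widehat{\Wq_h}(\beta)\bigr)d\beta
\]
along the vertical contour $\mathrm{Re}\,\beta=c>\Phi(q)$; the shift by $\delta^0 h$ is precisely what absorbs the fact that $\Wq_h(0)=(\gamma_h h)^{-1}\to\infty$ while $\Wq(0)=0$ in the infinite-variation case. The lower bounds $|\psi(c+it)-q|\gtrsim 1+t^2$ when $\diffusion>0$, $\gtrsim 1+|t|$ when $\diffusion=0$ and $\kappa(0)<\infty$, and $\gtrsim 1+|t|^\epsilon$ under Assumption~\ref{assumption:salient}, combine with the numerator bounds of step~(a) to produce an $L^1$ integrand and the rates advertised in the table; uniformity over $x\in\Zh\cap K$ is automatic since the prefactor depends on $x$ only through $e^{cx}$, bounded on compact $K$. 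Corresponding statements for $\Zq_h$ follow from $\Zq(x)=1+q\int_0^x\Wq(y)\,dy$ and its exact Riemann-sum analogue for $\Zq_h$: outside the $\measure=0$ row the rate is inherited from $\Wq$, whereas in the diffusion-only case the integral-versus-sum discretisation costs one order of $h$, producing the $O(h)$ entry for $\Zq$.

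Sharpness I would verify by direct computation on canonical examples: Brownian motion with drift for row~1 (for which $\Wq,\Wq_h,\Zq,\Zq_h$ are all explicit and the leading $h$-correction can be read off); a drifted compound Poisson with a single atom in $\measure$ for row~2; and a spectrally negative $\alpha$-stable process with $\alpha\in(1,2)$ for row~3, where letting $\epsilon\downarrow\alpha$ in Assumption~\ref{assumption:salient} forces the exponent $2-\epsilon$ to be attained. The main obstacle is precisely the infinite-variation regime: the exponent $2-\epsilon$ emerges only because the upper bound $|\psi-\psi_h|\lesssim h^{2-\epsilon}|\beta|^\epsilon$ and the lower bound $|\psi-q|\gtrsim|\beta|^\epsilon$ on the inversion contour are \emph{simultaneously} tight --- they draw on opposite halves of Assumption~\ref{assumption:salient} --- so establishing both on a complex contour as $|\mathrm{Im}\,\beta|\to\infty$ is where the bulk of the analytic work lies.
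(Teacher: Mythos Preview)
Your overall architecture --- compare Laplace exponents, invert along a vertical contour, and read off the rate from the quotient $(\psi_h-\psi)/((\psi-q)(\psi_h-q))$ --- is exactly the paper's strategy, and your list of coercivity bounds $|\psi(c+it)-q|\gtrsim |t|^2,\ |t|,\ |t|^\epsilon$ matches what the paper proves. However, there is a genuine gap in step~(b) that breaks the argument precisely when $\diffusion=0$.

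You assert that the numerator and denominator bounds ``combine \ldots\ to produce an $L^1$ integrand''. This is false in the regimes the theorem singles out. Take $\diffusion=0$, $\kappa(0)<\infty$: your own estimates give numerator $O(h|\beta|^2)$ and denominator $\sim|\beta|^2$, so the integrand is $\sim h$ along the contour, which is not integrable. In the $\kappa(0)=\infty$ case your claimed bound $|\psi-\psi_h|=O(h^{2-\epsilon}|\beta|^\epsilon)$ is not correct --- the actual estimate carries a factor $|\beta|^2$ (see the paper's property~$(\Delta_2)$) --- and against coercivity $|\beta|^{2\epsilon}$ this again fails to be $L^1$ whenever $\epsilon\le 3/2$. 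The paper deals with this by an integration by parts on the contour (its Lemma~5.1 and the estimates $(\Delta_1')$, $(\Delta_2')$ on the \emph{derivatives} of $\psi,\psi_h$), which gains one power of $|\beta|$ at the cost of a factor $1/x$ in front of the integral. That $1/x$ is not an artefact: it is the reason the theorem requires $K$ to be bounded away from zero when $\diffusion=0$. Your sentence ``uniformity over $x\in\Zh\cap K$ is automatic since the prefactor depends on $x$ only through $e^{cx}$'' is therefore exactly where the argument fails; if it were true, the hypothesis on $K$ would be superfluous.

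Two smaller points. First, the transform of $\Wq_h$ is $\frac{e^{\beta h}-1}{\beta h(\psi_h(\beta)-q)}$, not $(\psi_h(\beta)-q)^{-1}$; together with the $\delta^0 h$ shift this produces an extra term in the decomposition (the paper's term~(c) in~(5.3)) that you also need to control. Second, your route to $\Zq$ via $\Zq(x)=1+q\int_0^x\Wq$ inherits the $1/x$ blow-up of the $\Wq$ bound near $0$ when $\diffusion=0$, so one cannot simply integrate the $\Wq$ estimate from $0$; the paper instead estimates $\Zq-\Zq_h$ directly from its own integral representation, where the extra $1/\beta$ restores integrability without any $1/x$ factor (which is why $G$ need not be bounded away from zero).
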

\begin{remark}
\noindent (1) The rates of convergence depend on the behaviour of the tail of the L\'evy measure at the origin; by contrast behaviour of Laplace inversion algorithms tends to be susceptible to the degree of smoothness of the scale function (for which see \cite{kyprianou:smoothness}) itself \cite{abate_unified}.\\
\noindent (2) More exhaustive and at times general statements are to be found in
Propositions~\ref{proposition:convergence:BM+drift:Wq}--
\ref{proposition:convergence:diffusion_zero_infinite_variation}. In particular, the case $\diffusion>0$ and $\kappa(0)=\infty$ does not require
Assumption~\ref{assumption:salient} to be fulfilled, although the statement of
the convergence rate is more succinct under its proviso.\\
\noindent (3) The proof of Theorem~\ref{theorem:rates_for_scale_functions} consists of studying the differences of the integral representations of the scale functions. 
The integrands, however, decay only according to some power law, making the analysis much more involved than was the case in \cite{vidmarmijatovicsaul}, where the corresponding decay was exponential. In particular, one cannot, in the pure-jump case, directly apply the integral triangle inequality. The structure of the proof is explained in detail in Subsection~\ref{subsection:method_preliminary_etc}. \\
\noindent (4) Since scale functions often appear in applications (for which see Section~\ref{subsections:scales:applications}
below) in the form $\Wq(x)/\Wq(y)$ ($x,y>0$, $q\geq 0$), we note that the rates from Theorem~\ref{theorem:rates_for_scale_functions}
transfer directly to such quotients, essentially because 
$\Wq_h(y)\to \Wq(y)\in (0,\infty)$, as $h\downarrow 0$, and since for all $h\in (0,h_\star)$, $\frac{1}{\Wq(y)}-\frac{1}{\Wq_h(y)}=\frac{\Wq_h(y)-\Wq(y)}{\Wq(y)\Wq_h(y)}$.\\
\noindent (5) For a result concerning the derivatives of $\Wq$  see Subsection~\ref{subsection:functionals_of_scale_fncs_conv}.
\end{remark}
%

\subsection{Overview of the literature and of the applications of scale functions}\label{subsections:scales:applications}
For the general theory of spectrally negative L\'evy processes and their scale functions we refer to~\cite[Chapter~8]{kyprianou} and~\cite[Chapter~VII]{bertoin}, while an excellent account of available numerical methods for computing them can be found in~\cite[Chapter~5]{kuznetsovkyprianourivero}. 
Examples, few, but important, of processes when the scale functions 
\emph{can} be given analytically, appear e.g. in \cite{hubalek}; and in certain cases it is 
possible to construct them \emph{indirectly} \cite[Chapter~4]{kuznetsovkyprianourivero} (i.e. not starting from the basic datum, which we consider here to be the characteristic triplet of 
$X$). Finally, in the special case when $X$ is a positive drift minus a compound Poisson subordinator, we note that numerical schemes for (finite time) ruin/survival probabilities (expressible in terms of scale functions), based on discrete-time Markov chain approximations of one sort or another, have been proposed in the literature (see \cite{vylder,dickson,cardoso,dicksongray} and the references therein).

In terms of applications of scale functions in applied probability, there are numerous identities concerning
boundary crossing problems and related path decompositions in which scale
functions feature~\cite[p.~100]{kuznetsovkyprianourivero}. They do so either (a)~indirectly (usually as Laplace transforms of quantities which are ultimately of
interest), or even (b)~directly (then typically, but not always, as probabilities in the form of quotients $\sc(x)/\sc(y)$). For examples of the latter see the two-sided exit problem \cite[Chapter VII, Theorem 8]{bertoin}; ruin probabilities \cite[p. 217, Eq.~(8.15)]{kyprianou} and the Gerber-Shiu measure \cite[Section~5.4]{gerber_shiu} in the insurance/ruin theory context; laws of suprema of continuous-state branching processes \cite[Proposition~3.1]{bingham_branching}; L\'evy measures of limits of continuous-state branching processes with immigration (CBI processes) \cite[Eq.~(3.7)]{keller_mijatovic}; laws of branch lengths in population biology \cite[Eq.~(7)]{lambert}; the Shepp-Shiryaev optimal stopping problem (solved for the spectrally negative case in \cite[Theorem~2, Eq.~(30)]{avram_exitproblems}); \cite[Proposition~1]{loeffen} for an optimal dividend control problem. A further overview of these and other applications of scale functions (together with their derivatives and the integrals $\Zq$), e.g. in queuing theory and fragmentation processes, may be found in \cite[Section 1.2]{kuznetsovkyprianourivero}, see also the references therein. A suite of identities involving Laplace transforms of quantities pertaining to the reflected process of $X$ appears in \cite{mijatovicpistorius}.

\subsection{Organisation of the remainder of the paper}
Section~\ref{section:setting_and_notation} gives the setting and fixes general notation. Section~\ref{section:upwards_skip_free} introduces upwards skip-free L\'evy chains (they being the continuous-time analogues of random walks, which are skip-free to the right), describes their scale functions and how to compute them. In Section~\ref{section:Convergence_of_scale_functions} we demonstrate pointwise convergence of the approximating scale functions to those of the spectrally negative L\'evy process. Then Section~\ref{section:convergence_rates} establishes the rate at which this convergence transpires. Finally, Section~\ref{section:numerical_illustrations} provides some numerical illustrations and further discusses the computational side of the proposed algorithm. Appendices~\ref{appendix:technical_lemmas} and~\ref{appendix:some_asymptotic_properties_of_measures_on_R} contain the proofs of technical results from Subsection~\ref{subsection:auxiliary_technical_results}, while Appendix~\ref{appendix:further_examples} provides some additional numerical examples. 

\section{Setting and general notation}\label{section:setting_and_notation}
Throughout this paper we let $X$ be a spectrally negative L\'evy process (i. e. $X$ has stationary independent increments, is c\`adl\`ag, $X_0=0$ a.s., the L\'evy measure $\measure$ of $X$ is concentrated on $(-\infty,0)$ and $X$ does not have a.s. monotone paths). The Laplace exponent $\psi$ of $X$, defined via $\psi(\beta):=\log \EE[e^{\beta X_1}]$ ($\beta\in\{\gamma\in \mathbb{C}:\Re\gamma\geq 0\}=:\Crr$), can be expressed as (see e.g. \cite[p. 188]{bertoin}):
\begin{equation}\label{eq:laplace_exponent}
\psi(\beta)=\frac{1}{2}\diffusion\beta^2+\drift \beta+\int_{(-\infty,0)}\left(e^{\beta y}-\beta \tilde{c}(y)-1\right)\measure(dy),\quad \beta\in \Crr.
\end{equation}
The L\'evy triplet of $X$ is thus given by $(\diffusion,\measure,\mu)_{\tilde{c}}$, $\tilde{c}:=\mathrm{id}_\mathbb{R}\mathbbm{1}_{[-V,0)}$ with $V$ equal to either $0$ or $1$, the former only if $\int_{[-1,0)}\vert x\vert\measure(dx)<\infty$ (where $\mathrm{id}_\mathbb{R}$ is the identity on $\mathbb{R}$). Further, when the L\'evy measure satisfies $\int_{[-1,0)}\vert x\vert\measure(dx)<\infty$, we may always express $\psi$ in the form $\psi(\beta)=\frac{1}{2}\diffusion\beta^2+\drift_0 \beta+\int_{(-\infty,0)}\left(e^{\beta y}-1\right)\measure(dy)$ for $\beta\in\Crr$. If in addition $\diffusion=0$, then necessarily the drift $\mu_0$ must be strictly positive, $\mu_0>0$ \cite[p. 212]{kyprianou}. 

\subsection{The approximation}\label{subsection:the_approximation}
We now recall from \cite{vidmarmijatovicsaul}, specializing to the spectrally negative setting, the spatial discretisation of $X$ by the family of CTMCs $(X^h)_{h\in (0,h_\star)}$  (where $h_\star\in (0,+\infty]$). This family weakly approximates $X$ as $h\downarrow 0$. As in  \cite{vidmarmijatovicsaul} we will use two approximating schemes, scheme 1 and 2, according as $\diffusion>0$ or $\diffusion=0$. Recall that two different schemes are introduced since the case $\diffusion>0$ allows for a better (i.e. a faster converging) discretization of the drift term, but the case $\diffusion=0$ (in general) does not \cite[Paragraph~2.2.1]{vidmarmijatovicsaul}. Let also $V=0$, if $\measure$ is finite and $V=1$, if $\measure$ is infinite. 
Notation-wise, define for $h>0$,
$c^h_{y}:=\measure(A_y^h)$ with $A_y^h:=[y-h/2,y+h/2)$ ($y\in
\Zh^{--}:=\Zh\cap (-\infty,0)$); $A_0^h:=[-h/2,0)$;
$$c^h_0:=\int_{A_0^h}y^2\ind(y)\measure(dy)\quad\text{ and }\quad\drift^h:=\sum_{y\in
\Zh^{--}}y\int_{A_y^h}\ind(z)\measure(dz).$$ 

We now specify the law of the approximating chain $X^h$ by insisting that (i) $X^h$ is a compound Poisson (CP)
process, with $X^h_0=0$, a.s., and whose positive jumps do not exceed $h$ -- hence admits a Laplace exponent $\psi^h(\beta):=\log\EE[e^{\beta X^h_1}]$ ($\beta\in \Crr$) --; and (ii) specifying $\psi^h$ under scheme 1, as:
\begin{equation}\label{eq:laplace_exponent:scheme1}
\psi^h(\beta)=(\drift-\drift^h) \frac{e^{\beta h}-e^{-\beta h}}{2h}+(\diffusion+c_0^h)\frac{e^{\beta  h}+e^{-\beta h}-2}{2h^2}+\sum_{y\in \Zh^{--}}c_y^h\left(e^{\beta y}-1\right),
\end{equation}
and under scheme 2, as:
\begin{equation}
\psi^h(\beta)=(\drift-\drift^h)\frac{e^{\beta h}-1}{h} + c_0^h\frac{e^{\beta h}+e^{-\beta h}-2}{2h^2} +\sum_{y\in \Zh^{--}}c_y^h\left(e^{\beta y}-1\right).
\label{eq:laplace_exponent:scheme2}
\end{equation}

This is consistent with the approximation of \cite{vidmarmijatovicsaul}: the above Laplace exponents follow from the forms of the characteristic exponents \cite[Eq.~(3.1)
and~(3.2)]{vidmarmijatovicsaul} via analytic
continuation, and to properly appreciate where the different terms appearing in \eqref{eq:laplace_exponent:scheme1}-\eqref{eq:laplace_exponent:scheme2} come from, we refer the reader to our paper \cite{vidmarmijatovicsaul}, especially Section~2.1 therein. 

\begin{table}[!hbt]
\caption{The usage of schemes 1 and 2 and of $V$ depends on the nature of $\diffusion$ and $\measure$.
 }\label{table:reference}
\begin{center}
\begin{tabular}{|c|c|c|}\hline
 L\'evy measure/diffusion part& $\diffusion>0$ & $\diffusion=0$  \\\hline
$\measure(\mathbb{R})<\infty$ & $V=0$, scheme 1 & $V=0$, scheme 2\\\hline
$\measure(\mathbb{R})=\infty$  & $V=1$, scheme 1 & $V=1$, scheme 2\\\hline
\end{tabular}
\end{center}
\end{table}

Indeed, note that, starting directly from \cite[Eq.~(3.2)]{vidmarmijatovicsaul}, the term $(\drift-\drift^h)\frac{e^{\beta h}-1}{h}$ in \eqref{eq:laplace_exponent:scheme2} should actually read as: $$(\mu-\mu^h)\left(\frac{e^{\beta h}-1}{h}\mathbbm{1}_{[0,\infty)}(\mu-\mu^h)+\frac{1-e^{-\beta h}}{h}\mathbbm{1}_{(-\infty,0]}(\mu-\mu^h)\right).$$ However, when $X$ is a spectrally negative L\'evy process with $\diffusion=0$, we have $\mu-\mu^h\geq 0$, at least for all sufficiently small $h$. Indeed, if $\int_{[-1,0)}\vert y\vert\measure(dy)<\infty$, then $\mu_0>0$ and by dominated convergence $\mu-\mu^h\to\mu_0$ as $h\downarrow 0$. On the other hand, if $\int_{[-1,0)}\vert y\vert\measure(dy)=\infty$, then we deduce by monotone convergence $-\mu^h\geq \frac{1}{2}\int_{ [-1,-h/2)}\vert y\vert\measure(dy)\to\infty$ as $h\downarrow 0$. We shall assume throughout that $h_\star$ is already chosen small enough, so that $\mu-\mu^h\geq 0$ holds for all $h\in (0,h_\star)$.

In summary, then, $h_\star$ is chosen so small as to guarantee that, for all $h\in (0,h_\star)$: (i) $\mu-\mu^h\geq 0$ and (ii) $\psi^h$ is the Laplace exponent of some CP process $X^h$, which is also a CTMC with state space $\Zh$ (note that in \cite[Proposition 3.9]{vidmarmijatovicsaul} it is shown $h_\star$ can indeed be so chosen, viz. point (ii)). Eq.~\eqref{eq:laplace_exponent:scheme1} and \eqref{eq:laplace_exponent:scheme2} then determine the \emph{weak} approximation $(X^h)_{h\in (0,h_\star)}$ precisely.
Finally, for $h\in (0,h_\star)$, let $\measure^h$ denote the L\'evy measure of
$X^h$. In particular, $\psi^h(\beta)=\int\left(e^{\beta
y}-1\right)\measure^h(dy)$, $\beta\in\Crr$, $h\in (0,h_\star)$, so that the jump intensities, equivalently the L\'evy measure, of $X^h$ can be read off directly from \eqref{eq:laplace_exponent:scheme1}-\eqref{eq:laplace_exponent:scheme2}. It will also be
convenient to define $\psi^0:=\psi$. 

\subsection{Connection with integro-differential equations}\label{subsection:connection_to_IDE}
An alternative form of \eqref{eq:LinRecursion} (as generalized to the case of arbitrary $q\geq 0$; see \eqref{equation:recursion:Wq_basic} of Proposition~\ref{proposition:calculating_scale_functions:basic}) is the analogue of the relation $(L-q)\Wq=0$ in the spectrally negative case, the latter holding true under sufficient regularity conditions on $\Wq$ (see e.g. \cite[Eq.~(12)]{biffis}). Here $L$ is the infinitesimal generator of $X$ \cite[p. 208, Theorem 31.5]{sato}: $$Lf(x)=\frac{\diffusion}{2}f''(x)+\mu f'(x)+\int_{(-\infty,0)}\left(f(x+y)-f(x)-yf'(x)\mathbbm{1}_{[-V,0)}(y)\right)\measure(dy)$$ ($f\in C^2_0(\mathbb{R})$, $x\in \mathbb{R}$). This suggests there might be a link between our probabilistic approximation and solutions to integro-differential equations. 

Indeed, one can check, for each $q\geq 0$, by taking Laplace transforms (using \eqref{eq:laplace_exponent}, the expression for the Laplace transform of $\Wq$ \cite[p. 100, Eq.~(4)]{kuznetsovkyprianourivero}, nondecreasingness of $\Wq$, and the theorem of Fubini), that the function $\Wq$ satisfies the following integro-differential equation (cf. \cite[Corollary IV.3.3]{asmussen} for the case $\diffusion=0$, $\measure(\mathbb{R})<+\infty$, and survival probabilities): 
\footnotesize
\begin{equation}\label{eq:integro-differential}
\frac{1}{2}\diffusion \frac{d\Wq}{dx}=1-\mu\Wq(x)+\int_0^\infty\left(\Wq(x-y)(\measure(-\infty,-y)+q)-\Wq(x)\measure[-V,-y)\mathbbm{1}_{(0,V]}(y)\right)dy
\end{equation}
\normalsize
(for the value of $\Wq(0)$ see \cite[p. 127, Lemma 3.1]{kuznetsovkyprianourivero}). Note that in the last integral of \eqref{eq:integro-differential}, the two terms appearing in its integrand cannot be separated when $\kappa(0)=+\infty$. 

Furthermore, \eqref{eq:LinRecursion} as generalized to arbitrary $q\geq 0$ (see Corollary~\ref{proposition:calculating_scale_functions}, Eq.~\ref{equation:recursion:Wq}) can be rewritten, when $\diffusion=0$, as (for $x\in \Zh^{++}$): 
\footnotesize
\begin{equation}\label{eq:integrodiff:scheme2}
\frac{1}{2}c_0^h\frac{\Wq_h(x)-\Wq_h(x-h)}{h}=1+(\mu^h-\mu)\Wq_h(x)+h\sum_{k=1}^{x/h}\Wq_h(x-kh)\left(\measure(-\infty,-(k-1/2)h)+q\right),
\end{equation}
\normalsize
with $\Wq_h(0)=\frac{1}{\frac{1}{2}c_0^h/h+\mu-\mu^h}$, and when $\diffusion>0$, as (again for $x\in \Zh^{++}$):
\footnotesize
\begin{eqnarray}
\nonumber \frac{1}{2}\left(\diffusion+c_0^h\right)\frac{\Wq_h(x)-\Wq_h(x-h)}{h}&=&1+(\mu^h-\mu)\frac{\Wq_h(x)+\Wq_h(x-h)}{2}\\
&&+h\sum_{k=1}^{x/h}\Wq_h(x-kh)\left(\measure(-\infty,-(k-1/2)h)+q\right),
\label{eq:integrodiff:scheme1}
\end{eqnarray}
\normalsize
with $\Wq_h(0)=\frac{2h}{\diffusion+c_0^h+(\mu-\mu^h)h}$. 

Thus \eqref{eq:integrodiff:scheme2} and \eqref{eq:integrodiff:scheme1} can be seen as (simple) approximation schemes for the integro-differential equation \eqref{eq:integro-differential}.\footnote{We are grateful to an anonymous referee for pointing out this connection, on an earlier draft of this paper.} However, from this viewpoint alone, it would be very difficult indeed to ``guess'' the correct discretization, which would also yield meaningful generalized scale functions of approximating chains --- the latter being our starting point and precisely the aspect of our schemes, which we wish to emphasize. Indeed, higher-order schemes for  \eqref{eq:integro-differential}, if and should they exist, would (likely) no longer be connected with L\'evy chains. 

\subsection{General notation}\label{subsection:further_general_notation}
With regard to miscellaneous notation, we let $\mathbb{R}_+$ (respectively $\mathbb{R}^+$) be the nonnegative (respectively strictly positive) real numbers;  $\mathbb{Z}_+=\mathbb{N}_0$ (respectively $\mathbb{Z}^+$, $\mathbb{Z}_-$, $\mathbb{Z}^-$) the nonnegative, (respectively strictly positive, nonpositive, strictly negative) integers; $\Zh^+$ (respectively $\Zh^{++}$, $\Zh^-$, $\Zh^{--}$) the nonnegative (respectively strictly positive, nonpositive, strictly negative) elements of $\Zh:=\{hk:k\in\ZZ\}$; $\mathbb{C}^{\leftarrow}:=\{z\in\mathbb{C}\!\!:\Re z<0\}$ (respectively $\mathbb{C}^{\rightarrow}:=\{z\in\mathbb{C}\!\!:\Re z>0\}$) with $\overline{\mathbb{C}^{\leftarrow}}$ (respectively $\overline{\mathbb{C}^{\rightarrow}}$) denoting the closure of this set (note that the arrow notation is suggestive of which halfplane is being considered). To a nondecreasing right-continuous function $F:\mathbb{R}\to\mathbb{R}$ a measure $dF$ may be associated in the Lebesgue-Stieltjes sense; stating that $F$ is of $M$-exponential order means $\vert F\vert(x)\leq C e^{M x}$ for all $x\geq 0$, for some $C<\infty$. On the other hand, a real-valued function $f$, defined on some $(a,\infty)$, $a\in \mathbb{R}$ is said to grow asymptotically $M$-exponentially, $M\in (0,\infty)$, if $\{\liminf_{x\to \infty}f(x)e^{-Mx},\limsup_{x\to\infty} f(x)e^{-Mx}\}\subset (0,\infty)$. Further, for functions $g\geq 0$ and $h>0$ defined on some right neighborhood of $0$, $g\sim h$ (resp. $g=O(h)$, $g=o(h)$) means $\lim_{0+}g/h\in (0,\infty)$ (resp. $\limsup_{0+}g/h<\infty$, $\lim_{0+}g/h=0$). Next, the Laplace transform of a measurable function $f:\mathbb{R}\to\mathbb{R}$ of $M$-exponential order, with $f\vert_{(-\infty,0)}$ constant (respectively measure $\mu$ on $\mathbb{R}$, concentrated on $[0,\infty)$) is denoted $\hat{f}$ (respectively $\hat{\mu}$): $\hat{f}(\beta)=\int_0^\infty e^{-\beta x}f(x)dx$ for $\Re\beta>M$ (respectively $\hat{\mu}(\beta)=\int_{[0,\infty)}e^{-\beta x}\mu(dx)$ for all $\beta\geq 0$ such that this integral is finite). For $\{x,y\}\subset\mathbb{R}$, $\lfloor x\rfloor:=\max\{k\in \ZZ:k\leq x\}$, $x\lor y=\max\{x,y\}$ and $x\land y=\min\{x,y\}$. A sequence $(h_n)_{n\in\mathbb{N}}$ of non-zero real numbers is said to be nested, if $h_n/h_{n+1}\in \mathbb{N}$ for all $n\in \mathbb{N}$. $\delta_x$ denotes the Dirac measure at $x\in \mathbb{R}$. Finally, increasing will mean strictly increasing; DCT stands for the Dominated Convergence Theorem; and we interpret $\pm a/0=\pm \infty$ for $a>0$. 

\section{Upwards skip-free L\'evy chains and their scale functions}\label{section:upwards_skip_free}
In the sequel, we will require a fluctuation theory (and, in particular, a
theory of scale functions) for random walks, which are skip-free to the right,
once these have been embedded into continuous-time as CP processes (see next
definition and remark). Indeed, this theory has been developed in full detail
in \cite{vidmar:fluctuation_theory} and we recall here for the readers
convenience the pertinent results. 

\begin{definition} 
\label{def:USF_Levy_chain}
A L\'evy process $Y$ with L\'evy measure $\measurenu$ is
said to be an \emph{upwards skip-free L\'evy chain}, if it is compound Poisson
and the L\'evy measure $\measurenu$ satisfies
$\supp(\measurenu)\subset
\Zh$ and $\supp(\measurenu\vert_{\mathcal{B}((0,\infty))})=\{h\}$ 
for some $h>0$.
\end{definition}

\begin{remark}\label{remark:ours_are_usf}
For all $h\in (0,h_\star)$, $X^h$ is an upwards skip-free L\'evy chain.
\end{remark}
For the remainder of this section we let $Y$ be an upwards skip-free L\'evy chain with L\'evy measure $\measurenu$, such that $\measurenu(\{h\})>0$ ($h>0$).

The following is either clear or else can be found in \cite[Subsection 3.1]{vidmar:fluctuation_theory}:
\begin{enumerate}[(i)]
\item One can introduce the Laplace exponent $\varphi:\Crr\to\mathbb{C}$,
given by $\varphi(\beta):=\int_{\mathbb{R}}(e^{\beta x}-1)\measurenu(dx)$
($\beta\in \Crr$), for which: $\EE[e^{\beta Y_t}]=\exp\{t\varphi(\beta)\}$
($\beta\in \Crr$, $t\geq 0)$. $\varphi$ is continuous in $\Crr$, analytic in
$\Cr$, $\lim_{+\infty}\varphi\vert_{[0,\infty)}=+\infty$ with
$\varphi\vert_{[0,\infty)}$ strictly convex.  
 \item Let
$\Phi(0)\in [0,\infty)$ be the largest root of $\varphi$ on $[0,\infty)$.
Then $\varphi\vert_{[\Phi(0),\infty)}:[\Phi(0),\infty)\to [0,\infty)$ is an
increasing bijection and we let
$\Phi:=(\varphi\vert_{[\Phi(0),\infty)})^{-1}:[0,\infty)\to [\Phi(0),\infty)$
be its inverse.
\end{enumerate}

We introduce in the next proposition two families of scale functions for $Y$,
which play analogous roles in the solution of exit problems, as they do in the
case of spectrally negative L\'evy processes, see \cite[Subsections
4.1-4-3]{vidmar:fluctuation_theory}: 

\begin{proposition}[Scale functions]\label{proposition:scale_functions_for_USF}
There exists a family of functions $W^{(q)}:\mathbb{R}\to [0,\infty)$ and
$$Z^{(q)}(x)=1+q\int_0^{\lfloor x/h\rfloor h}W^{(q)}(y)dy,\quad
x\in\mathbb{R}$$ defined for each $q\geq 0$ such that for any $q\geq 0$, we
have $\Wq(x)=0$ for $x<0$ and $\Wq$ is characterised on $[0,\infty)$ as the
unique right-continuous and piecewise constant function of exponential order
whose Laplace transform satisfies: $$\widehat{\Wq}(\beta)=\frac{e^{\beta
h}-1}{\beta h(\varphi(\beta)-q)}\text{ for }\beta>\Phi(q).$$
\end{proposition}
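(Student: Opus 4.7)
\emph{Reduction to a generating-function identity.} Write $a:=\measurenu(\{h\})>0$, $b_k:=\measurenu(\{-kh\})$ for $k\ge 1$, and $\lambda:=\measurenu(\mathbb{R})$, so that $\varphi(\beta)=a(e^{\beta h}-1)+\sum_{k\ge 1}b_k(e^{-\beta kh}-1)$. A candidate $\Wq$ with the stated structure takes value $w_n\ge 0$ on $[nh,(n+1)h)$ and equals $0$ on $(-\infty,0)$; its Laplace transform is $\widehat{\Wq}(\beta)=\tfrac{1-e^{-\beta h}}{\beta}\,G(e^{-\beta h})$, where $G(z):=\sum_{n\ge 0}w_nz^n$. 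Substituting $z=e^{-\beta h}$, the prescribed formula reduces to the single identity
\[
hP(z)G(z)=1,\qquad P(z):=a-(\lambda+q)z+\sum_{k\ge 1}b_kz^{k+1}.
\]
Since $P(0)=a>0$, this uniquely determines $(w_n)_{n\ge 0}$ by coefficient matching: $w_0=1/(ha)$ and, for $n\ge 0$, $aw_{n+1}=(\lambda+q)w_n-\sum_{k=1}^nb_kw_{n-k}$.

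\emph{Nonnegativity.} Summing this recursion from $m=0$ to $n$ and substituting $\lambda=a+\sum_{k\ge 1}b_k$, a brief rearrangement yields the equivalent telescoped form
\[
aw_{n+1}=aw_0+\sum_{k=1}^n b_k\bigl(w_{n-k+1}+\cdots+w_n\bigr)+\Bigl(\sum_{k>n}b_k+q\Bigr)\sum_{j=0}^n w_j,
\]
whose coefficients are manifestly nonnegative. Strong induction on $n$ starting from $w_0=1/(ha)>0$ then gives $w_n\ge 0$ for every $n$.

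\emph{Exponential order and Laplace identity.} Holomorphy of $G=1/(hP)$ at $0$ yields \emph{some} exponential bound $w_n\le C\rho^n$, but for the Laplace transform to converge on $\{\Re\beta>\Phi(q)\}$ one needs the sharp rate $\Phi(q)$. I would obtain this via the Esscher tilt at level $\Phi(q)$: since $\varphi(\Phi(q))=q$, the tilt produces an upwards skip-free L\'evy chain $\tilde Y$ with Laplace exponent $\tilde\varphi(\beta)=\varphi(\beta+\Phi(q))-q$ satisfying $\tilde\varphi(0)=0$ and $\tilde\Phi(0)=0$; direct inspection of the two generating-function identities gives $\tilde P(z)=e^{\Phi(q)h}P(e^{-\Phi(q)h}z)$, hence $w_n=e^{\Phi(q)(n+1)h}\tilde w_n$. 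Since $\tilde\varphi'(0^+)=\varphi'(\Phi(q))\ge 0$, $\tilde Y$ does not drift to $-\infty$, and an argument paralleling the spectrally negative case shows that $\tilde W:=\tilde W^{(0)}$ is at most polynomially growing (bounded when $\varphi'(\Phi(q))>0$). This gives $w_n=O(n^c e^{\Phi(q)nh})$, which is enough for $\widehat{\Wq}$ to converge absolutely on $\{\Re\beta>\Phi(q)\}$ and equal $\tfrac{e^{\beta h}-1}{\beta h(\varphi(\beta)-q)}$ there by construction.

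\emph{Uniqueness; main obstacle.} If $\widetilde W$ is another right-continuous piecewise-constant function of exponential order whose Laplace transform agrees with that of $\Wq$ on $(\Phi(q),\infty)$, Laplace-transform injectivity forces $\widetilde W=\Wq$ Lebesgue-a.e., and the shared step structure on $h\ZZ_+$ promotes this to equality everywhere; the definition of $\Zq$ is then an immediate formula. I anticipate the exponential-order step to be the principal hurdle: establishing the sharp rate $\Phi(q)$ (rather than merely \emph{some} exponential rate) requires either the Esscher-transform route above---whose boundedness/polynomial-growth input for $\tilde W$ is itself a nontrivial fluctuation-theoretic fact, best proven by the optional-stopping identification $\Wq(x)/\Wq(a)=\EE_x[e^{-q\tau_a^+};\tau_a^+<\tau_{(-\infty,0)}]$ using the martingale $e^{\Phi(q)Y_t-qt}$ together with upwards skip-freeness of $Y$---or, equivalently, a Wiener-Hopf-type analysis showing that $P$ has no zero inside the disc $\{|z|<e^{-\Phi(q)h}\}$.
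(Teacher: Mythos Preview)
The paper does not give a proof of this proposition in-text; it is imported wholesale from \cite[Subsections~4.1--4.3]{vidmar:fluctuation_theory}. Judging from the surrounding material (Proposition~\ref{proposition:calculating_scale_functions:basic} is obtained there via the strong Markov property, and the discussion in Paragraph~\ref{subsubsection:precise_statement} identifies ratios $\Wq_h(x)/\Wq_h(x+y)$ with two-sided exit probabilities), the argument in the cited reference is the probabilistic one: define $\Wq$ through the two-sided exit problem for $Y$ and then compute its Laplace transform. Your route is genuinely different and essentially correct: you construct $(w_n)$ directly from the generating-function identity $hP(z)G(z)=1$, prove nonnegativity by telescoping the recursion, and then argue exponential order and uniqueness. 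Incidentally, your telescoped identity is exactly Eq.~\eqref{equation:recursion:Wq} of Corollary~\ref{proposition:calculating_scale_functions}, which the paper obtains \emph{a posteriori} from the probabilistic Proposition~\ref{proposition:calculating_scale_functions:basic}; you have recovered it purely algebraically. The probabilistic approach delivers the exit-problem interpretation for free; your analytic approach is self-contained and avoids any hitting-time machinery.

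Your flagged ``principal hurdle'' is in fact much easier than either of the two routes you propose, and needs neither an Esscher tilt nor any fluctuation-theoretic input. Once $w_n\ge 0$ is established, Pringsheim's theorem says the radius of convergence $R$ of $G=\sum_n w_n z^n$ is itself a singular point of $G$. Since $\sum_k b_k<\infty$, $P$ is analytic in the open unit disc and continuous on its closure; hence if $R<1$ the identity $hPG=1$ on $\vert z\vert<R$ forces $P(R)=0$. But positive real zeros $z_0\in(0,1)$ of $P$ correspond via $\beta_0=-h^{-1}\log z_0>0$ to solutions of $\varphi(\beta_0)=q$, and strict convexity of $\varphi\vert_{[0,\infty)}$ with $\varphi(0)=0$ leaves $\Phi(q)$ as the unique positive solution (for $q>0$; for $q=0$ it is $\Phi(0)$, possibly $0$). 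This pins $R=e^{-\Phi(q)h}$ directly (the boundary case $\Phi(q)=0$ is handled by noting $P(z)\to P(1)=-q=0$ as $z\uparrow 1$, so $G(z)\to\infty$ and $R\le 1$). Thus $\widehat{\Wq}$ converges on $\{\Re\beta>\Phi(q)\}$ and equals the stated expression there; no growth result for a tilted $\tilde W$, and no Wiener--Hopf analysis of complex zeros of $P$, is required.
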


\begin{remark}\label{remark:scalefnsUSF}
\begin{enumerate}[(i)]
\item\label{remark:scalefnsUSF:one} The functions $\Wq$ are nondecreasing and the corresponding measures $d\Wq$ are supported in $\Zh$ for each $q\geq 0$. 
\item\label{remark:scalefnsUSF:three} The Laplace transform of the functions $\Zq$ is given by: $$\widehat{\Zq}(\beta)=\frac{1}{\beta}\left(1+\frac{q}{\varphi(\beta)-q}\right)\text{ for }\beta>\Phi(q),\, q\geq 0.$$
\item\label{remark:scalefnsUSF:four} For all $q\geq 0$: $\Wq(0)=1/(h\measurenu(\{h\}))$. 
\end{enumerate}
\end{remark}
Finally, the following proposition, whose corollary will gives rise to a method for
calculating the values of the scale functions associated to $Y$, follows from
the strong Markov property of $Y$ (see~\cite[Subsection~4.4]{vidmar:fluctuation_theory}). 

\begin{proposition}\label{proposition:calculating_scale_functions:basic}
Let $P$ be the transition matrix of the jump chain of the CTMC $Y$ and let $q\geq 0$, $\nu_q:=1+q/\measurenu(\mathbb{R})$. Assume (for ease of notation and without loss of generality) that $h=1$. Then, seen as vectors, $\Wq:=(\Wq(k))_{k\in\ZZ}$ and $\Zq:=(\Zq(k))_{k\in\ZZ}$ satisfy, entry-by-entry: $$(P\Wq)\vert_{\ZZ_+}=\nu_q\Wq\vert_{\ZZ_+}\text{ and }(P\Zq)\vert_{\ZZ_+}=\nu_q\Zq\vert_{\ZZ_+},$$ i.e.  with $\gamma:=\measurenu(\RR)$, $p:=\measurenu(\{1\})/\gamma$ and $q_n:=\measurenu(\{-n\})/\gamma$ ($n\in\mathbb{N}$), the recursive relations (for $n\in \mathbb{N}\cup \{0\}$): 
\begin{equation}\label{equation:recursion:Wq_basic}
p\Wq(n+1)=\left(1+\frac{q}{\gamma}\right)\Wq(n)-\sum_{k=1}^nq_k\Wq(n-k),
\end{equation}
 and 
\begin{equation}\label{equation:recursion:Zq:basic}
p\Zq(n+1)+\left(1-p-\!\!\sum_{k=1}^{n-1}q_k\right)=\left(1+\frac{q}{\gamma}\right)\Zq(n)-\!\!\sum_{k=1}^{n-1}q_k\Zq(n-k)
\end{equation}
hold true.  Additionally $\Wq\vert_{\mathbb{Z}^-}=0$ with $\Wq(0)=1/\measurenu(\{1\})$, whereas $\Zq\vert_{\mathbb{Z}_-}=1$.
\end{proposition}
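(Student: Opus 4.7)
The plan is to deduce both recursions from the strong Markov property of the CTMC $Y$ applied at its first jump time, using as input the two-sided exit representation of $\Wq$ from~\cite[Subsections 4.1--4.3]{vidmar:fluctuation_theory}, namely
\begin{equation*}
\EE^n\bigl[e^{-q\sigma_c^+};\, \sigma_c^+ < \sigma^-\bigr] = \frac{\Wq(n)}{\Wq(c)}, \qquad 0 \leq n \leq c,\ c \in \ZZ^+,
\end{equation*}
where $\sigma_c^+$ is the first hitting time of $c$ and $\sigma^-$ the first entrance time of $\ZZ^-$. An auxiliary ingredient for the $\Zq$ half is the elementary telescoping identity $\Zq(n) - \Zq(n-1) = q\Wq(n-1)$ for $n \in \ZZ^+$, immediate from the integral definition of $\Zq$ and the fact that $\Wq$ is right-continuous and constant on each interval $[m, m+1)$, $m \in \ZZ_+$.

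For~\eqref{equation:recursion:Wq_basic}, fix $n \in \ZZ_+$ and any $c \in \ZZ^+$ with $c > n$, and condition on the first jump epoch $T_1 \sim \Exp(\gamma)$, which is independent of the post-jump increment. Since $\EE[e^{-qT_1}] = \gamma/(\gamma+q) = 1/\nu_q$ and $Y_{T_1} - n$ equals $+1$ with probability $p$ and $-k$ with probability $q_k$ ($k\geq 1$), the strong Markov property at $T_1$ yields
\begin{equation*}
\frac{\Wq(n)}{\Wq(c)}\;=\;\frac{1}{\nu_q}\left[p\,\frac{\Wq(n+1)}{\Wq(c)} + \sum_{k=1}^\infty q_k\,\frac{\Wq(n-k)}{\Wq(c)}\right],
\end{equation*}
where the conventions $\Wq|_{\ZZ^-} \equiv 0$ (for overshoots below $0$) and $\Wq(c)/\Wq(c) = 1$ (for the edge case $n+1 = c$) uniformly capture both boundary regimes. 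Multiplying through by $\nu_q \Wq(c)$ and truncating the now-finite sum at $k = n$ yields~\eqref{equation:recursion:Wq_basic}.

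For~\eqref{equation:recursion:Zq:basic}, rather than invoking a separate exit identity, I would deduce it algebraically from~\eqref{equation:recursion:Wq_basic}: setting $D(n) := (P\Zq)(n) - \nu_q \Zq(n)$, direct substitution using the telescoping identity above gives $D(n) - D(n-1) = q\bigl[p\Wq(n) + \sum_{k=1}^{n-1} q_k \Wq(n-1-k) - \nu_q \Wq(n-1)\bigr]$ for $n \geq 1$, which vanishes by~\eqref{equation:recursion:Wq_basic} at index $n-1$; and $D(0) = 0$ is checked directly from $\Zq(1) = 1 + q/\nu(\{1\})$, $\Zq|_{\ZZ_-} \equiv 1$ and $\sum_{k=1}^\infty q_k = 1 - p$. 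Thus $(P\Zq)|_{\ZZ_+} = \nu_q\Zq|_{\ZZ_+}$, and splitting $\sum_{k=1}^\infty q_k \Zq(n-k)$ into its tail over $\ZZ_-$ (where $\Zq\equiv 1$ contributes $\sum_{k\geq n}q_k = 1 - p - \sum_{k=1}^{n-1} q_k$) and its remainder produces~\eqref{equation:recursion:Zq:basic}. The boundary values $\Wq(0) = 1/\nu(\{1\})$ and $\Zq|_{\ZZ_-} = 1$ are read off from Remark~\ref{remark:scalefnsUSF}(\ref{remark:scalefnsUSF:four}) and the definition of $\Zq$. The main subtlety --- and essentially the only non-routine step --- is the justification that the first-jump decomposition uniformly handles both boundary regimes through the stated conventions; once unpacked, the rest is algebra.
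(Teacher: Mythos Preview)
Your argument is correct. The paper itself gives no displayed proof, saying only that the proposition ``follows from the strong Markov property of $Y$'' and deferring to \cite[Subsection~4.4]{vidmar:fluctuation_theory}; your first-jump decomposition of the two-sided exit identity for $\Wq$ is exactly that strong Markov argument. For $\Zq$ you take a slightly different route: rather than running an analogous probabilistic first-step decomposition (which the reference presumably does), you deduce $(P\Zq)\vert_{\ZZ_+}=\nu_q\Zq\vert_{\ZZ_+}$ algebraically from the already-established $\Wq$ recursion via the telescoping relation $\Zq(n)-\Zq(n-1)=q\Wq(n-1)$. This is a perfectly valid alternative and arguably tidier, since it avoids invoking a second exit identity and makes transparent that the $\Zq$ eigenvector relation is a formal consequence of the $\Wq$ one together with the integral definition of $\Zq$.
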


\begin{corollary}\label{proposition:calculating_scale_functions}
Continue to assume $h=1$. We have for all $n\in \mathbb{N}\cup \{0\}$:
\begin{equation}\label{equation:recursion:Wq}
\Wq(n+1)=\Wq(0)+\sum_{k=1} ^{n+1}\Wq(n+1-k)\frac{q+\measurenu(-\infty,-k]}{\measurenu(\{1\})},\quad \Wq(0)=1/\measurenu(\{1\}),
\end{equation}
and for $\widetilde{\Zq}:=\Zq-1$,
\begin{equation}\label{equation:recursion:Zq}
\widetilde{\Zq}(n+1)=(n+1)\frac{q}{\measurenu\{1\}}+\sum_{k=1}^n\widetilde{\Zq}(n+1-k)\frac{q+\measurenu(-\infty,-k]}{\measurenu(\{1\})},\quad \widetilde{\Zq}(0)=0.
\end{equation}
\end{corollary}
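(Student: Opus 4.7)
The plan is to derive both recursions from Proposition~\ref{proposition:calculating_scale_functions:basic} by a uniform telescoping/induction argument. The key algebraic observation is that, because $\supp(\measurenu) \subset \ZZ$ (with $h=1$), the tail satisfies
\[
\measurenu(-\infty,-(k+1)] \;=\; \measurenu(-\infty,-k] - \measurenu(\{-k\}), \qquad k \in \NN,
\]
and in particular $\measurenu(\{1\}) + \measurenu(-\infty,-1] = \gamma$. Via this identity, the single-atom weights $\measurenu(\{-k\})$ appearing in \eqref{equation:recursion:Wq_basic}--\eqref{equation:recursion:Zq:basic} will be repackaged as the tail weights $q + \measurenu(-\infty,-k]$ appearing in \eqref{equation:recursion:Wq}--\eqref{equation:recursion:Zq}.

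For \eqref{equation:recursion:Wq}, after multiplying through by $\measurenu(\{1\})$ it suffices to show, for all integers $n \geq 0$,
\[
\measurenu(\{1\})\Wq(n+1) \;=\; 1 + \sum_{k=1}^{n+1}\Wq(n+1-k)\bigl(q + \measurenu(-\infty,-k]\bigr).
\]
Denote the right-hand side by $f(n)$ and prove $f(n) = \measurenu(\{1\})\Wq(n+1)$ by induction on $n$. For the base case $n=0$, both quantities equal $(\gamma+q)/\measurenu(\{1\})$: the left via \eqref{equation:recursion:Wq_basic}, and $f(0)=1+\Wq(0)(q+\gamma-\measurenu(\{1\}))$ using $\Wq(0)=1/\measurenu(\{1\})$. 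For the inductive step, reindexing $f(n)$ via $j=n+1-k$, subtracting $f(n-1)$, and applying the telescoping identity produces
\[
f(n)-f(n-1) \;=\; \Wq(n)\bigl(q+\measurenu(-\infty,-1]\bigr) \;-\; \sum_{k=1}^{n}\measurenu(\{-k\})\Wq(n-k),
\]
which, after using $q+\measurenu(-\infty,-1]=\gamma+q-\measurenu(\{1\})$ and invoking \eqref{equation:recursion:Wq_basic}, collapses to $\measurenu(\{1\})[\Wq(n+1)-\Wq(n)]$. Combined with the base case, this propagates to all $n$.

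For \eqref{equation:recursion:Zq}, substituting $\Zq = \widetilde{\Zq}+1$ into \eqref{equation:recursion:Zq:basic} yields the cleaner form
\[
\measurenu(\{1\})\widetilde{\Zq}(n+1) \;=\; q + (\gamma+q)\widetilde{\Zq}(n) - \sum_{k=1}^{n-1}\measurenu(\{-k\})\widetilde{\Zq}(n-k),
\]
to which the identical telescoping/induction argument applies verbatim, the base case being $\measurenu(\{1\})\widetilde{\Zq}(1)=q$, whence the inhomogeneous term $(n+1)q/\measurenu(\{1\})$ on the right-hand side of \eqref{equation:recursion:Zq} arises by accumulating one copy of $q$ at each inductive step. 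That the sum in \eqref{equation:recursion:Zq} terminates at $k=n$ rather than $n+1$ is immediate from $\widetilde{\Zq}(0)=0$ annihilating the would-be final term. The argument is elementary throughout; no step is a real obstacle, the only (minor) bookkeeping care being to keep track of the reindexing and of the boundary contributions when invoking the support-based telescoping.
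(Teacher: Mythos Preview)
Your argument is correct. The paper does not give an explicit proof of this corollary, treating it as an immediate consequence of Proposition~\ref{proposition:calculating_scale_functions:basic}; your telescoping/induction derivation is precisely the natural way to fill in this step, and all the bookkeeping (base cases, reindexing, use of $\measurenu(-\infty,-1]=\gamma-\measurenu(\{1\})$, and the substitution $\Zq=\widetilde{\Zq}+1$) checks out.
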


\begin{remark}\label{remark:numerics:three} 
Whilst we have based our algorithm on Eq.~\eqref{equation:recursion:Wq} and~\eqref{equation:recursion:Zq}, nevertheless Eq.~\eqref{equation:recursion:Wq_basic} and \eqref{equation:recursion:Zq:basic} should not be discounted entirely. For example, they allow 
to make the following observation. 

Recall $\sc_h$ is given by~\eqref{equation:recursion:Wq}, as applied to the process $Y=X^h/h$, equivalently then it may be obtained from \eqref{equation:recursion:Wq_basic}, again as applied to the process $Y=X^h/h$. Assume next (by scaling, without loss of generality) $V\leq x\in \Zh$. 

Suppose the L\'evy measure $\measure$ of $X$ is modified below (possibly including) the level $-x$ in such a way that $\measure(-\infty,-x]$ is preserved, whilst $ \mathbbm{1}_{(-x,0)}\cdot\measure$, $\diffusion$ and $\mu$ are kept fixed. Then \eqref{equation:recursion:Wq_basic}, as applied to $X^h/h$ for computing $W_h$ on the interval $[0,x]$, remains unaffected. This is because this recursion up to level $x$ depends solely on the probabilities of the jump-sizes with modulus at most $x-h$  and on the total L\'evy mass of the approximating L\'evy chain. The latter, however, do not change by said transformation of the L\'evy measure (see \eqref{eq:laplace_exponent:scheme1}-\eqref{eq:laplace_exponent:scheme2}). Consequently, since, as we shall see, $\sc_h$ converges to $\sc$, as $h\downarrow 0$, this also means that  the scale function $\sc$ itself on the interval $[0,x]$ is invariant under such a transformation. Analogously for $\Wq$ and $\Zq$. 

\end{remark}

\section{Convergence of scale functions}\label{section:Convergence_of_scale_functions}
First we fix some notation. Pursuant to \cite[Subsections 8.1 \& 8.2]{kyprianou} (respectively Section~\ref{section:upwards_skip_free}) we associate henceforth with $X$ (respectively $X^h$) two families of scale functions $(\Wq)_{q\geq 0}$ and $(\Zq)_{q\geq 0}$ (respectively  $(\Wq_h)_{q\geq 0}$ and $(\Zq_h)_{q\geq 0}$, $h\in (0,h_\star)$). Note that these functions are defined on the whole of $\mathbb{R}$, are nondecreasing, c\`adl\`ag, with $\Wq(x)=\Wq_h(x)=0$ and $\Zq(x)=\Zq_h(x)=1$ for $x\in (-\infty,0$). We also let $\Phi(0)$ (respectively $\Phi^h(0)$) be the largest root of $\psi\vert_{[0,\infty)}$ (respectively $\psi^h\vert_{[0,\infty)}$) and denote by $\Phi$ (respectively $\Phi^h$) the inverse of $\psi\vert_{[\Phi(0),\infty)}$ (respectively $\psi^h\vert_{[\Phi^h(0),\infty)}$, $h\in (0,h_\star)$). As usual $\sc$ (resp. $\sc_h$) denotes $\sc^{(0)}$ (resp. $\sc^{(0)}_h$, $h\in (0,h_\star)$). 

Next, for $q\geq 0$, recall the Laplace transforms of the functions $\Wq$ and $\Zq$ \cite[p. 214, Theorem~8.1]{kyprianou} (for $\beta>\Phi(q)$): $\int_0^\infty e^{-\beta x}\Wq(x)dx=1/(\psi(\beta)-q)$ and $\int_0^\infty e^{-\beta x}\Zq(x)dx=\frac{1}{\beta}\left(1+\frac{q}{\psi(\beta)-q}\right)$ (where the latter formula follows using e.g. integration by parts). The Laplace transforms of $\Wq_h$ and $\Zq_h$, $h\in (0,h_\star)$, follow from Proposition~\ref{proposition:scale_functions_for_USF} and Remark~\ref{remark:scalefnsUSF}~\ref{remark:scalefnsUSF:three}. 

\begin{proposition}[Pointwise convergence]\label{remark:pointwise_convergence}\label{proposition:scales:pointwise_convergence}
Suppose $\psi^h\to \psi$ and $\Phi^h\to\Phi$ pointwise as $h\downarrow 0$. Then, for each $q\geq 0$, $\Wq_{h}\to \Wq$ and $\Zq_{h}\to \Zq$ pointwise, as $h\downarrow 0$. 
\end{proposition}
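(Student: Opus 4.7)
The plan is to obtain pointwise convergence via a Laplace-transform continuity argument. Both $\Wq_h$ (for $h$ small) and $\Wq$ are right-continuous, nondecreasing, vanish on $(-\infty, 0)$, and are of $\Phi^h(q)$- respectively $\Phi(q)$-exponential order; since $\Phi^h(q) \to \Phi(q)$, all of them are of $(\Phi(q) + 1)$-exponential order for sufficiently small $h$. Each is therefore uniquely determined on $[0, \infty)$ by its Laplace transform on any right-neighborhood of $\Phi(q)$.

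First I would verify Laplace-transform convergence. Fix $\beta > \Phi(q)$; then $\beta > \Phi^h(q)$ eventually, so by Proposition~\ref{proposition:scale_functions_for_USF},
\[
\widehat{\Wq_h}(\beta) = \frac{e^{\beta h} - 1}{\beta h \left( \psi^h(\beta) - q \right)}.
\]
Since $(e^{\beta h} - 1)/(\beta h) \to 1$ and $\psi^h(\beta) \to \psi(\beta) > q$ as $h \downarrow 0$, this converges to $1/(\psi(\beta) - q) = \widehat{\Wq}(\beta)$. The same computation for $\Zq_h$, using Remark~\ref{remark:scalefnsUSF}\ref{remark:scalefnsUSF:three}, gives $\widehat{\Zq_h}(\beta) \to \widehat{\Zq}(\beta)$ for every $\beta > \Phi(q)$.

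Second, I would upgrade this to pointwise function convergence by exponential tilting. Fix $\beta_0 > \Phi(q)$ and consider the finite Borel measures $e^{-\beta_0 x} d\Wq_h(x)$ on $[0,\infty)$; they are finite because $\int_0^\infty e^{-\beta_0 x}\, d\Wq_h(x) = \Wq_h(0) + \beta_0 \widehat{\Wq_h}(\beta_0)$, and by the previous step this has a finite limit. Their Laplace--Stieltjes transforms are shifts of $\widehat{\Wq_h}$, hence converge pointwise on $[0,\infty)$ to the corresponding tilted transform associated with $\Wq$. Feller's continuity theorem for finite measures then delivers vague convergence of these tilted measures, whence convergence of their distribution functions at every continuity point; an integration-by-parts identity recovers pointwise convergence of $\Wq_h(x) \to \Wq(x)$ at each continuity point of $\Wq$. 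Since $\Wq$ is continuous on $(0, \infty)$ (standard for spectrally negative L\'evy processes; see~\cite[Lemma~8.3]{kyprianou}), this yields convergence for every $x > 0$. The case $\Zq_h \to \Zq$ follows by the same continuity-theorem machinery, or more directly from $\Zq(x) = 1 + q\int_0^x \Wq(y)\,dy$ and $\Zq_h(x) = 1 + q\int_0^{\lfloor x/h\rfloor h} \Wq_h(y)\,dy$ combined with dominated convergence (using an exponential majorant uniform in $h$).

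The main obstacle is the passage from Laplace-transform convergence to pointwise function convergence: the scale functions grow exponentially, so the associated Stieltjes measures are infinite and the elementary Feller continuity theorem does not apply directly. The exponential tilt by $\beta_0 > \Phi(q)$ reduces matters to the finite-measure case, but one must check that the tilted total masses and the untilting step behave well, which rests on the uniform exponential order above. A minor, separable point is the boundary case $x = 0$ in the bounded-variation regime, where $\Wq(0) = 1/\mu_0 > 0$ and $\Wq$ has a jump at $0$; this is handled directly from $\Wq_h(0) = 1/(h\,\measure^h(\{h\}))$ by extracting the limit of $h\,\measure^h(\{h\})$ from \eqref{eq:laplace_exponent:scheme1}--\eqref{eq:laplace_exponent:scheme2}.
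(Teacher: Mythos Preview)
Your proposal is correct and follows essentially the same route as the paper: pointwise convergence of Laplace transforms $\Rightarrow$ vague convergence of the Stieltjes measures $d\Wq_h\to d\Wq$ (and likewise for $Z$) $\Rightarrow$ pointwise convergence at continuity points, with the jump at $0$ in the finite-variation case handled separately via $\Wq_h(0)=1/(h\measure^h(\{h\}))$. The only difference is cosmetic: the paper invokes a continuity theorem for measures whose Laplace transforms are finite on a common half-line directly, whereas you make the reduction explicit by exponentially tilting to obtain finite measures and then applying Feller's theorem---these are the same argument unpacked to different depths. One small slip: your total-mass identity should read $\int_{[0,\infty)}e^{-\beta_0 x}\,d\Wq_h(x)=\beta_0\,\widehat{\Wq_h}(\beta_0)$ (the atom at $0$ is already accounted for on the right; cf.\ the integration-by-parts formula quoted in the paper), but this does not affect the argument.
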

\begin{remark}\label{remark:convergencePhi}
We will see in \ref{delta1} of Subsection~\ref{subsection:the_difference} that, in fact, $\psi^h\to \psi$ locally uniformly in $[0,\infty)$ as $h\downarrow 0$, which implies that $\Phi^h\to\Phi$ pointwise as $h\downarrow 0$. In particular, given any $q\geq 0$, $\gamma>\Phi(q)$ implies $\gamma>\Phi^h(q)$ for all $h\in (0,h_0)$, for some $h_0>0$.
\end{remark}
\begin{proof}
Since $\Phi^h(q)\to \Phi(q)$ as $h\downarrow 0$, it follows via integration by parts ($\int_{[0,\infty)}e^{-\beta x}dF(x)=\beta\int_{(0,\infty)}e^{-\beta x}F(x)dx$ for any $\beta\geq 0$ and any nondecreasing right-continuous $F:\mathbb{R}\to\mathbb{R}$ vanishing on $(-\infty,0)$  \cite[Chapter 0, (4.5) Proposition]{revuzyor}) that, for some $h_0>0$, the Laplace transforms of $d\Wq$, $d\Zq$, $(d\Wq_h)_{h\in (0,h_0)}$ and $(d\Zq_h)_{h\in (0,h_0)}$, are defined (i.e. finite) on a common halfline. These measures are furthermore concentrated on $[0,\infty)$  and since $\psi^h\to\psi$ pointwise as $h\downarrow 0$, then $\widehat{d\Wq_h}\to \widehat{d\Wq}$ and $\widehat{d\Zq_h}\to \widehat{d\Zq}$ pointwise as $h\downarrow 0$. By \cite[p. 110, Theorem 8.5]{bhattacharya}, it follows that $d\Wq_{h_n}\to d\Wq$ and $d\Zq_{h_n}\to d\Zq$ vaguely as $n\to\infty$, for any sequence $(h_n)_{n\geq 1}\downarrow 0$. This implies that, as $h\downarrow 0$, $\Wq_{h}\to \Wq$ (respectively $\Zq_{h}\to \Zq$) pointwise at all points of continuity of $\Wq$ (respectively $\Zq$). Now, the functions $\Zq$ are continuous everywhere, whereas $\Wq$ is continuous on $\mathbb{R}\backslash \{0\}$ and has a jump at $0$, if and only if $X$ has sample paths of finite variation \cite[p. 222, Lemma 8.6]{kyprianou}. In the latter case, however, we necessarily have $\diffusion=0$ and $\int_{[-1,0)}\vert y\vert\measure(dy)<\infty$ \cite[p. 140, Theorem 21.9]{sato} and the jump size is $\Wq(0)=1/\mu_0$ (see Section~\ref{section:setting_and_notation} for definition of $\mu_0$). By Remark~\ref{remark:scalefnsUSF}~\ref{remark:scalefnsUSF:four} and \eqref{eq:laplace_exponent:scheme2}, $\Wq_h(0)=1/(h\measure^h(\{h\}))=1/(\mu-\mu^h+c_0^h/h)$. The latter quotient, however, converges to $1/\mu_0$, as $h\downarrow 0$, by the DCT  (since $\int_{[-1,0)}\vert y\vert\measure(dy)<\infty$ in this case, and  $c_0^h\leq (h/2)\int_{[-V\land (h/2),0)}\vert y\vert\lambda(dy)$).
\end{proof}

\section{Rates of convergence}\label{section:convergence_rates}
In this section we establish our main result, Theorem~\ref{theorem:rates_for_scale_functions} from the Introduction. Subsection~\ref{subsection:method_preliminary_etc} describes the general method of proof and establishes some preliminary observations and notation. Subsection~\ref{subsection:auxiliary_technical_results}
contains technical results, notationally and otherwise independent from rest of the text, which are applied time and again in the sequel. Then Subsections~\ref{subsection:the_difference}--\ref{subsection:sclaes:diffusion0infvariation} establish a series of convergence results, which together imply Theorem~\ref{theorem:rates_for_scale_functions}. Finally, Subsection~\ref{subsection:functionals_of_scale_fncs_conv} contains a convergence result for the derivatives $\Wqprime$. 

\subsection{Method of proof, preliminary observations and notation}\label{subsection:method_preliminary_etc}
The key step in the proof of Theorem~\ref{theorem:rates_for_scale_functions}
consists of a detailed analysis of the relevant differences arising in the
integral representations (see
Paragraph~\ref{subsubsection:integral_representations}) of the scale functions,
see Paragraph~\ref{subsubsection:precise_statement}.
A more detailed explanation of the method of proof
will be given in 
Paragraph~\ref{subsubsection:method_for_obtaining_the_rates_of_convergence}.

\begin{remark}
With reference to Subsection~\ref{subsection:connection_to_IDE}, there is of course extensive literature on numerical solutions to integro-differential equations (IDE) of the relevant (Volterra) type (viz. Eq.~\eqref{eq:integro-differential}). This literature will, however, typically assume at least the continuity of the kernel appearing in the integral of the IDE, to even pose the problem, and obtain rates of convergence under additional smoothness conditions thereon (and the solution to the IDE) \cite[Chapters~2 and~3]{brunner} \cite[Chapters~7 and~11]{linz}. In our case the kernel appearing in \eqref{eq:integro-differential} is of course not (necessarily) even continuous (let alone possessing higher degrees of smoothness). Further, discounting for a moment the continuity requirement on the kernel, which may appear technical, some relevant \emph{general} results on convergence do exist, e.g. \cite[p. 102, Theorem 7.2]{linz} for the case $\diffusion=0$ \& $\measure(\mathbb{R})<+\infty$, but are not really (directly) applicable, since one would need to \emph{a priori} establish (at least) a rate of convergence for the difference between the integral appearing in \eqref{eq:integro-differential} and its discretization (local consistency error; see \cite[p. 101, Eq.~(7.12)]{linz}). This does not appear possible in general without a knowledge of the (sufficient) smoothness properties of the target function $\Wq$ (the latter not always being clear; see \cite{kyprianou:smoothness}) and indeed, it would seem, those of the tail $(y \mapsto \measure(-\infty,-y))$. Such an error analysis would be further complicated when $\diffusion>0$ (respectively $\kappa(0)=\infty$), since then we are dealing with the discretization also of the derivative of $\Wq$ (respectively the integral in \eqref{eq:integro-differential} cannot be split up as the difference of the integrals of each individual term of the integrand). It is then not very likely that looking at this problem from the integro-differential perspective alone would allow us to obtain, moreover sharp, rates of convergence (at least not in general). 

By contrast, the method for obtaining the sharp rates of convergence that we shall use, based on the integral representations of the scale functions and their approximations, will allow us to handle all the cases within a single framework.
\end{remark}

\subsubsection{Integral representations of scale
functions}\label{subsubsection:integral_representations}

\begin{proposition}\label{proposition:well-posedness}
Let $q\geq 0$. For all $\beta\in\mathbb{C}$ with $\Re\beta>\Phi(q)$, $\psi(\beta)-q\ne 0$ (respectively $\psi^h(\beta)-q\ne 0$) and one has $(\psi(\beta)-q)\widehat{\Wq}(\beta)=1$ (respectively $\beta h(\psi^h(\beta)-q)\widehat{\Wq_h}(\beta)=e^{\beta h}-1$), and  $\beta \widehat{\Zq}(\beta)=1+\frac{q}{\psi(\beta)-q}$ (respectively $\beta \widehat{\Zq_h}(\beta)=1+\frac{q}{\psi^h(\beta)-q}$) for the scale functions of $X$ (respectively $X^h$, $h\in (0,h_\star)$). 
\end{proposition}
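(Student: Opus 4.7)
The plan is to obtain all four identities and the claimed non-vanishing statements by analytic continuation from their real-variable counterparts (already recorded in the preceding material) to the full complex half-planes $\{\Re\beta>\Phi(q)\}$ and $\{\Re\beta>\Phi^h(q)\}$.

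First I would collect the analyticity inputs. The Laplace exponent $\psi$ is analytic on $\Cr$: the polynomial part in \eqref{eq:laplace_exponent} is obvious, and for the integral term, since $\measure$ is carried on $(-\infty,0)$, the integrand $e^{\beta y}-\beta\tilde c(y)-1$ is holomorphic in $\beta$ and locally uniformly in $\beta\in\Crr$ dominated by an $L^1(\measure)$ majorant (using $|e^{\beta y}|\le 1$ for $y<0$ and $\Re\beta\ge 0$, together with the standard $O(y^{2})$ estimate near the origin), so differentiation under the integral applies. The same holds for $\psi^h$, which is a finite sum of exponentials plus a polynomial. On the other side, $\Wq\ge 0$ is nondecreasing, and the real-variable identity $\widehat{\Wq}(\beta)=1/(\psi(\beta)-q)$ ensures that the defining Laplace integral for $\widehat{\Wq}$ is finite at some real $\beta>\Phi(q)$; by nonnegativity it then converges absolutely on the whole open half-plane $\{\Re\beta>\Phi(q)\}$, so $\widehat{\Wq}$ is analytic there. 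Since $\Zq(x)-1=q\int_0^x\Wq(y)\,dy$, the same analyticity follows for $\widehat{\Zq}$.

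The main step is then a one-line application of the identity theorem. Both sides of $(\psi(\beta)-q)\widehat{\Wq}(\beta)=1$ are analytic on $\{\Re\beta>\Phi(q)\}$ and coincide on the ray $(\Phi(q),\infty)$, so the equality extends to the whole half-plane. Since the right-hand side equals $1$, this simultaneously forces $\psi(\beta)-q\neq 0$ throughout $\{\Re\beta>\Phi(q)\}$. With non-vanishing secured, $1+q/(\psi(\beta)-q)$ is analytic on the same half-plane, matches $\beta\widehat{\Zq}(\beta)$ on the positive ray, and a second application of the identity theorem extends the $\Zq$ identity to the full half-plane.

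The discrete case is mechanically identical, with Proposition~\ref{proposition:scale_functions_for_USF} and Remark~\ref{remark:scalefnsUSF}\ref{remark:scalefnsUSF:three} replacing the classical real-variable formulas. One extra piece of book-keeping: on the right-hand side of the $\Wq_h$ identity the factor $e^{\beta h}-1$ vanishes only on $(2\pi i/h)\ZZ$, and on the right-hand side of the $\Zq_h$ identity the factor $\beta$ vanishes only at the origin; since $\Phi^h(q)\ge 0$, neither zero lies in the open half-plane $\{\Re\beta>\Phi^h(q)\}$, so the non-vanishing of $\psi^h(\beta)-q$ still reads off directly from the $\Wq_h$ identity. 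I do not expect a substantive obstacle here: the statement is essentially analytic continuation of identities already known on the real line, with the non-vanishing conclusion obtained for free from the constant right-hand sides.
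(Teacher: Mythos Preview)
Your proposal is correct and follows essentially the same approach as the paper: extend the real-variable Laplace transform identities (already recorded in Section~\ref{section:Convergence_of_scale_functions} and Proposition~\ref{proposition:scale_functions_for_USF}) to the complex half-plane by analytic continuation, then read off the non-vanishing of $\psi(\beta)-q$ and $\psi^h(\beta)-q$ from the extended identities. You have simply fleshed out the analyticity inputs and the book-keeping about zeros of $e^{\beta h}-1$ that the paper leaves implicit in its two-sentence proof.
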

\begin{proof}
The stipulated equalities extend from $\beta>\Phi(q)$ real, to complex $\beta$ with  $\Re\beta>\Phi(q)$, via analytic continuation, using expressions for the Laplace transforms of the scale functions (the latter having been noted in Section~\ref{section:Convergence_of_scale_functions}). In particular, so extended, they then imply $\psi(\beta)-q\ne 0$ for the range of $\beta$ as given.
\end{proof}

\begin{corollary}[Integral representation of scale functions]\label{corollary:scale_fncs_laplace-transforms}
Let $q\geq 0$. For any $\gamma>\Phi(q)$, we have, for all $x>0$ (with $\beta:=\gamma+is$): 
\begin{equation}\label{eq:integral_represantation_Wq}
\Wq(x)=\frac{1}{2\pi}\lim_{T\to\infty}\int_{-T}^T\frac{e^{\beta x}}{\psi(\beta)-q}ds
\end{equation}
and 
\begin{equation}
\Zq(x)=\frac{1}{2\pi}\lim_{T\to\infty}\int_{-T}^T\frac{e^{\beta x}}{\beta}\left(1+\frac{q}{\psi(\beta)-q}\right)ds.
\end{equation}
Likewise, for any $h\in (0,h_\star)$ and then any $\gamma>\Phi^h(q)$, we have, for all $x\in \Zh^+$ (again with $\beta:=\gamma+is$):
\begin{equation}\label{eq:integral_represantation_Wqh}
\Wq_h(x)=\frac{1}{2\pi}\int_{-\pi/h}^{\pi/h}\frac{e^{\beta (x+h)}}{\psi^h(\beta)-q}ds
\end{equation}
and 
\begin{equation}
\Zq_h(x)=\frac{1}{2\pi}\int_{-\pi/h}^{\pi/h}\frac{e^{\beta x}}{\beta}\frac{\beta h}{1-e^{-\beta h}}\left(1+\frac{q}{\psi^h(\beta)-q}\right)ds.
\end{equation}
\end{corollary}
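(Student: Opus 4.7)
The plan is to invert the Laplace-transform identities of Proposition~\ref{proposition:well-posedness}, using Fourier series in the lattice case and the Bromwich complex-inversion theorem in the continuous case.

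For the lattice-valued scale functions, I would exploit that both $\Wq_h$ and $\Zq_h$ are right-continuous and piecewise constant on the intervals $[nh,(n+1)h)$ for $n\in\mathbb{Z}_+$ (for $\Zq_h$ this is immediate from the representation $\Zq_h(x)=1+q\int_0^{\lfloor x/h\rfloor h}\Wq_h(y)dy$ of Proposition~\ref{proposition:scale_functions_for_USF}). Consequently,
$$\widehat{\Wq_h}(\beta)=\frac{1-e^{-\beta h}}{\beta}\sum_{n\geq 0}\Wq_h(nh)e^{-\beta nh},$$
and the analogous identity holds for $\Zq_h$. Combining these with the Laplace-transform expressions of Proposition~\ref{proposition:well-posedness} and simplifying the $(1-e^{-\beta h})/\beta$ prefactor against the $(e^{\beta h}-1)/(\beta h)$ factor appearing there produces the clean generating-function identities
$$\sum_{n\geq 0}\Wq_h(nh)e^{-\beta nh}=\frac{e^{\beta h}}{h(\psi^h(\beta)-q)},\quad \sum_{n\geq 0}\Zq_h(nh)e^{-\beta nh}=\frac{1}{1-e^{-\beta h}}\left(1+\frac{q}{\psi^h(\beta)-q}\right).$$
For any $\gamma>\Phi^h(q)$ these series converge absolutely at $\beta=\gamma+is$; multiplying by $e^{\beta nh}$ and integrating over $s\in[-\pi/h,\pi/h]$ then extracts the $n$-th coefficient by orthogonality of the characters $s\mapsto e^{ismh}$ on this interval (the integral vanishes unless $m=n$, in which case it equals $2\pi/h$), and unpacking the remaining prefactors yields exactly the two stated formulas.

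For the continuous scale functions I would invoke the classical Bromwich (complex) inversion theorem. The functions $\Wq$ and $\Zq$ are nondecreasing (hence locally of bounded variation), right-continuous, and of exponential order $\Phi(q)$, and their Laplace transforms, as listed in Proposition~\ref{proposition:well-posedness}, are analytic on $\{\Re\beta>\Phi(q)\}$ with non-vanishing denominators there. Moreover $\Zq$ is continuous everywhere and $\Wq$ is continuous on $(0,\infty)$ (see the proof of Proposition~\ref{proposition:scales:pointwise_convergence}). Under these hypotheses the Bromwich formula returns $\Wq(x)$ and $\Zq(x)$ (not jump-averages) at every $x>0$, along any vertical line $\Re\beta=\gamma>\Phi(q)$, which is precisely the stated representation.

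The main subtlety is that, in the pure-jump case, $|\widehat{\Wq}(\gamma+is)|$ decays only like $|s|^{-1}$ as $|s|\to\infty$, so standard $L^1$-Fourier inversion does not apply and the inversion must be read as the symmetric principal value $\lim_{T\to\infty}\int_{-T}^{T}$. This is precisely the form of the Bromwich theorem appropriate for Laplace transforms of locally-BV functions of exponential order, proved via a Dirichlet-kernel argument; all of its hypotheses have been verified above, so no additional work is required beyond citing the classical result. The $\Zq$ case is easier still, since the extra $1/\beta$ factor already provides $|s|^{-2}$ decay and hence absolute integrability of the integrand.
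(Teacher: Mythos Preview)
Your proposal is correct and follows essentially the same approach as the paper's own proof: after noting that all four functions are of $\gamma$-exponential order for the relevant $\gamma$, the paper simply cites the inverse Laplace transform (Bromwich formula) for $\Wq,\Zq$ and the inverse $Z$-transform for $\Wq_h,\Zq_h$. Your explicit Fourier-coefficient extraction in the lattice case is exactly the inverse $Z$-transform spelled out, and your invocation of Bromwich inversion (with the principal-value caveat) is precisely the continuous inversion the paper refers to.
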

\begin{proof}
First note that  $\Wq$ and $\Zq$ (respectively $\Wq_h$ and $\Zq_h$) are of $\gamma$-exponential order for all $\gamma>\Phi(q)$ (respectively $\gamma>\Phi^h(q)$, $h\in (0,h_\star)$). Then use the inverse Laplace \cite[Section 3.3]{davies} (respectively $Z$ \cite[p. 11]{jury}) transform.
\end{proof}

\subsubsection{The differences $\Delta_W^{(q)}$ and $\Delta_Z^{(q)}$}\label{subsubsection:precise_statement}
For $x\geq 0$, let $T_x$ (resp. $T^h_x$) denote the first entrance time of $X$ (resp. $X^h$) to $[x,\infty)$, let $\underline{X}_t:=\inf \{X_s:s\in [0,t]\}$  (resp. $\underline{X^h}_t:=\inf \{X_s^h:s\in [0,t]\}$), $t\geq 0$, be the running infimum process and $\underline{X}_{\infty}:=\inf\{X_s:s\in [0,\infty)\}$ (resp. $\underline{X^h}_\infty:=\inf\{X_s^h:s\in [0,\infty)\}$) the overall infimum of $X$ (resp. $X^h$, $h\in (0,h_\star)$). 

In the case of the spectrally negative process $X$, it follows from \cite[Theorem 8.1~(iii)]{kyprianou}, regularity of $0$ for $(0,\infty)$ \cite[p. 212]{kyprianou}, dominated convergence and continuity of $\Wq\vert_{(0,\infty)}$ that, for $q\geq 0$ and $\{x,y\}\subset \mathbb{R}^+$: 
\begin{equation}\label{eq:comparison_issue_X}
\EE[e^{-q T_y}\mathbbm{1}(\underline{X}_{T_y}\geq -x)]=\frac{\Wq(x)}{\Wq(x+y)}\text{ and }\EE[e^{-q T_y}\mathbbm{1}(\underline{X}_{T_y}>-x)]=\frac{\Wq(x)}{\Wq(x+y)}.
\end{equation}
On the other hand, we find \cite[Theorem 4.6]{vidmar:fluctuation_theory} the direct analogues of these two formulae in the case of the approximating processes $X^h$, $h\in (0,h_\star)$ as being ($q\geq 0$, $\{x,y\}\subset \Zh^{++}$):
\begin{equation}\label{eq:comparison_issue_Xh}
\EE[e^{-q T^h_y}\mathbbm{1}(\underline{X^h}_{T_y^h}\geq -x)]=\frac{\Wq_h(x)}{\Wq_h(x+y)}\text{ and }\EE[e^{-q T_y^h}\mathbbm{1}(\underline{X}_{T_y^h}>-x)]=\frac{\Wq_h(x-h)}{\Wq_h(x-h+y)}.
\end{equation}

We conclude by comparing \eqref{eq:comparison_issue_X} with \eqref{eq:comparison_issue_Xh} that there is \emph{no a priori probabilistic reason} to favour either $\Wq_h$ or $\Wq_h(\cdot-h)$ in the choice of which of these two quantities to compare to $\Wq$. Nevertheless, this choice is not completely arbitrary:

\noindent (a) In view of \eqref{eq:integral_represantation_Wq} and \eqref{eq:integral_represantation_Wqh}, the quantity $\Wq_h(\cdot-h)$ seems more favourable (cf. also the findings of Proposition~\ref{proposition:convergence:BM+drift:Wq}, especially when $q=\vert \mu\vert=0$). In addition, when $X$ has sample paths of infinite variation, a.s., $\Wq(0)$ is equal to zero \cite[p. 33, Lemma 3.1]{kuznetsovkyprianourivero} and so is $\Wq_h(-h)$, whereas $\Wq_h(0)$ is always strictly positive ($h\in (0,h_\star)$). 

\noindent (b) On the other hand, when $X$ has sample paths of finite variation, a.s., then $\Wq(0)=1/\mu_0>0$ \cite[p. 33, Lemma 3.1]{kuznetsovkyprianourivero} and if in addition the L\'evy measure is finite, then in fact also $\Wq_h(0)=1/\mu_0$ for all $h\in (0,h_\star)$. 

\begin{remark}\label{remark:basic}
\begin{enumerate}[(i)]
\item\label{remark:basic:i} It follows from the above discussion that it is reasonable to approximate $\Wq$ by $\Wq_h(\cdot-h)$ (resp. $\Wq_h$), when $X$ has sample paths of infinite (resp. finite) variation (a.s.). Indeed, in the Brownian motion with drift case, approximating $\Wq$ by $\Wq_h$, or even the average $(\Wq_h+\Wq_h(\cdot-h))/2$, rather than by $\Wq_h(\cdot-h)$, would lower the order of convergence from quadratic to linear (see Proposition~\ref{proposition:convergence:BM+drift:Wq}).  
\item When $q=0$ or $x=0$, $\Zq(x)=\Zq_h(x)=1$, $h\in (0,h_\star)$. Thus, when comparing these functions, we shall always assume $q\land x>0$, for the only interesting case.
\end{enumerate}
\end{remark}

In view of Remark~\ref{remark:basic}~\ref{remark:basic:i} we define $\delta^0$ to be equal to $0$ or $1$ according as the sample paths of $X$ are of finite or infinite variation (a.s.). Fix $q\geq 0$. For $h\in (0,h_\star)$ we then define the differences:
\begin{equation}\label{eq:diffW}
\Delta_{W}^{(q)}(x,h):=\Wq(x)-\Wq_h(x- \delta^0h),\quad x\in \Zh^{++}\cup \{\delta_0h\}
\end{equation}
and 
\begin{equation}\label{eq:diffZ}
\Delta_Z^{(q)}(x,h):=\Zq(x)-\Zq_h(x),\quad x\in\Zh^{++}.
\end{equation}
Fix further any $\gamma>\Phi(q)$. Let $h\in (0,h_\star)$ be such that also $\gamma>\Phi^h(q)$, Then Corollary~\ref{corollary:scale_fncs_laplace-transforms} implies,  for any $x\in \Zh^{++}\cup \{\delta_0h\}$ (we always let, here and in the sequel, $\beta:=\gamma+is$ to shorten notation): 
\footnotesize
\begin{eqnarray}\label{eq:difference:Wqs} \nonumber
 e^{-\gamma x}2\pi \diffW&=&\underbrace{\lim_{T\to\infty}\int_{(-T,T)\backslash (-\pi/h,\pi/h)}e^{isx}\frac{ds}{\psi(\beta)-q}}_{\mytag{(a)}{W:a}}+\underbrace{\int_{[-\pi/h,\pi/h]}e^{isx}\left[\frac{\psi^h-\psi}{(\psi-q)(\psi^h-q)}\right](\beta)ds}_{\mytag{(b)}{W:b}}+\\
&&\underbrace{(1-\delta^0)\int_{[-\pi/h,\pi/h]}e^{isx}\left(1-e^{ish}\right)\frac{ds}{\psi^h(\beta)-q}}_{\mytag{(c)}{W:c}}
\end{eqnarray}
\normalsize
whereas for $x\in\Zh^{++}$: 
\footnotesize
\begin{eqnarray}\label{eq:difference:Zqs}
\nonumber e^{-\gamma x}2\pi\diffZ&=&\underbrace{\lim_{T\to\infty}\!\int_{(-T,T)\backslash (-\pi/h,\pi/h)}\frac{e^{isx}}{\beta}\left(\frac{q}{\psi(\beta)-q}\right)ds}_{\mytag{(a)}{Z:a}}\\\nonumber
&+&\underbrace{\int_{[-\pi/h,\pi/h]}\frac{e^{isx}}{\beta}\left(1-\frac{\beta h}{1-e^{-\beta h}}\right)\left(\frac{q}{\psi^h(\beta)-q}\right)ds}_{\mytag{(b)}{Z:b}}\\
&+&\underbrace{q\int_{[-\pi/h,\pi/h]}\frac{e^{isx}}{\beta}\left[\frac{\psi^h-\psi}{(\psi^h-q)(\psi-q)}\right](\beta)ds}_{\mytag{(c)}{Z:c}}.
\end{eqnarray}
\normalsize
Note that in \eqref{eq:difference:Zqs} we have taken into account that the difference between the inverse Laplace and inverse $Z$ transform, for the function, which is identically equal to $1$, vanishes identically. 

\begin{remark}\label{remark:approx_by_mean}
Notice that if we were to approximate $W^{(q)}(x)$ by the average $\frac{1}{2}(W^{(q)}_h(x)+W^{(q)}_h(x-h))$ instead of $W^{(q)}_h(x-\delta^0h)$, and were to adapt accordingly the definition in \eqref{eq:diffW}, the resulting change to  \eqref{eq:difference:Wqs} would be, that  term \ref{W:c}
 would always be present, with $(1-\delta^0)$ replaced by $1/2$. 

Now, when the sample paths are of finite variation (hence when we have $\delta^0=0$), none of the arguments would change, and the same theoretical rates of convergence would obtain.  Indeed as we will see in the proof of Proposition~\ref{proposition:Wqconvergence:trivial_diffusion_FV}, in this case \eqref{eq:difference:Wqs}\ref{W:c} admits an estimate that yields a linear order of convergence ($O(h)/x$), and one gets linear order terms such as this from terms \ref{W:a} and  \ref{W:b}, also. 

However, when the sample paths are of infinite variation (hence when we have $\delta^0=1$), then in the estimate of the difference $\Delta_W^{(q)}$ we would have to \emph{add} to the error also an estimate of \eqref{eq:difference:Wqs}\ref{W:c} (with $1-\delta^0$ replaced by $1/2$ therein), which in general would then worsen the theoretical order of convergence. (Cf. also Remark~\ref{remark:basic}~\ref{remark:basic:i}.)
\end{remark}

\subsubsection{Method for obtaining the rates of convergence in \eqref{eq:difference:Wqs} and \eqref{eq:difference:Zqs}}\label{subsubsection:method_for_obtaining_the_rates_of_convergence}
Apart from the Brownian motion with drift case, which is treated explicitly, the method for obtaining the rates of convergence for the differences \eqref{eq:difference:Wqs} and \eqref{eq:difference:Zqs} is as follows (recall $\beta=\gamma+is$):
\begin{enumerate}
\item \label{method:scales:one}First we estimate $\vert \psi^h-\psi\vert (\beta)$ to control the numerators. In particular, we are able to conclude $\psi^h\to\psi$, uniformly in bounded subsets of $\Crr$. See Subsection~\ref{subsection:the_difference}.
\item \label{method:scales:two} Then we show $\vert\psi ^h-q\vert(\beta)$ is suitably bounded from below on $s\in (-\pi/h,\pi/h)$, uniformly in $h\in [0, h_0)$, for some $h_0>0$. This property, referred to as \emph{coercivity}, controls the denominators. See Subsection~\ref{subsection:coercivity}.
 \item \label{method:scales:three} Finally, using \eqref{method:scales:one} and \eqref{method:scales:two}, one can estimate the integrals appearing in \eqref{eq:difference:Wqs} and \eqref{eq:difference:Zqs} either by a direct $\vert\int\cdot ds\vert\leq \int\vert\cdot\vert ds$ argument, or else by first applying a combination of integrations by parts (see \eqref{eq:general_scheme_per_partes} below) and Fubini's Theorem. In the latter case, the estimates of $\vert \frac{d(\psi(\beta)-\psi^h(\beta))}{ds}\vert$ and the growth in $s$, as $\vert s\vert\to\infty$, of $\frac{d(\psi^h-q)(\beta)}{ds}$, $h\in [0,h_\star)$, also become relevant, and we provide these in Subsection~\ref{subsection:_the_difference_of_derivatives}. 
\end{enumerate}

\begin{remark}

\noindent (i)  Note that the integral representation of the scale functions is crucial for our programme to yield results. The formulae \eqref{eq:difference:Wqs} and \eqref{eq:difference:Zqs} suffice to give a precise rate 
locally uniformly in $x\in (0,\infty)$. 

\noindent (ii) The integration by parts in \eqref{method:scales:three} is applied according to the general scheme ($f$ differentiable, $x>0$): 
\begin{equation}\label{eq:general_scheme_per_partes}
\frac{1}{x}\frac{d}{ds}\left( e^{isx}f(s)\right)=ie^{isx}f(s)+\frac{1}{x}e^{isx}f'(s),
\end{equation}
and with the integral $\int e^{isx}f(s) ds$ (over a relevant domain) in mind. Then, upon integration against $ds$, the left-hand side and the second term on the right-hand side of \eqref{eq:general_scheme_per_partes} admit for an estimate, which could not be made for $\int e^{isx}f(s) ds$ directly, but in turn a factor of $1/x$ emerges, implying (as we will see) that the final bound is locally uniform in $(0,\infty)$ (in the estimates there is always also present a factor of $e^{\gamma x}$ which from the perspective of the relative error, and in view of the growth properties of $\Wq$ and $\Zq$ at $+\infty$ \cite[p.  129, Lemma~3.3]{kuznetsovkyprianourivero}, is perhaps not so bad). In fact, the convergence rate obtained via \eqref{method:scales:one}-\eqref{method:scales:three} is uniform in bounded subsets of $(0,\infty)$, if \eqref{method:scales:three} does not involve integration by parts. 

\noindent (iii) Now, it is usually the case that the estimates from \eqref{method:scales:three} may be made by a direct application of the integral triangle inequality. We were not able to avoid integration by parts, however, in the case of the convergence for the functions $\Wq$, $q\geq 0$, and even then only if $\diffusion=0$ (see Subsections~\ref{subsection:scales:FV} and~\ref{subsection:scales:diffusion=0:infinite_variation}). Particularly delicate is the case when furthermore the sample paths of $X$ are of finite variation (a.s.). In the latter case a key lemma is Lemma~\ref{lemma:FV}, which itself depends crucially on the findings of Proposition~\ref{appendix:proposition:integrability:main}. 

\noindent (iv) Even when $\diffusion=0$, however, numerical experiments (see Section~\ref{section:numerical_illustrations} and Appendix~\ref{appendix:further_examples}) seem to suggest that, at least in some further subcases, one should be able to establish convergence for the functions $\Wq$, $q\geq 0$, which is uniform in bounded (rather than just compact) subsets of $(0,\infty)$. This remains open for future research. 
\end{remark}


\begin{remark}
Sharpness of the rates is obtained by constructing specific examples of L\'evy processes, for which convergence is no better than stipulated (cf. the statement of Theorem~\ref{theorem:rates_for_scale_functions}). The key observation here is the following principle of \emph{reduction by domination}: 
\begin{quote}
Suppose we seek to prove that $f\geq 0$ converges to $0$ no faster than $g>0$, i.e. that $\limsup_{h\downarrow 0}f(h)/g(h)\geq C>0$ for some $C$. If one can show $f(h)\geq A(h)-B(h)$ and $B=o(g)$, then to show $\limsup_{h\downarrow 0}f(h)/g(h)\geq C$, it is sufficient to establish $\limsup_{h\downarrow 0}A(h)/g(h)\geq C$. 
\end{quote}
(This principle was also applied in \cite{vidmarmijatovicsaul} to establish sharpness of the stated rates of convergence there.)
\end{remark}
We will use the basic, but very useful, principle of reduction by domination without explicit reference in the sequel. 

\subsubsection{Further notation}
Notation-wise, we let  (where $\delta\in [0,1]$):
$$\xi(\delta):=\int_{[-\delta,0)}u^2\measure(du),\ \kappa(\delta):=\int_{[-1,-\delta)}\vert y\vert\measure(dy),\ \zeta(\delta):=\delta\kappa(\delta)\text{ and }\gamma(\delta):=\delta^2\measure([-1,-\delta))$$
and remark that, by the findings of \cite[Lemma 3.8]{vidmarmijatovicsaul}, $\gamma(\delta)+\zeta(\delta)+\xi(\delta)\to 0$ as $\delta\downarrow 0$.

Finally, note that, unless otherwise indicated, we consider henceforth as having fixed:
\begin{equation*}
X,\quad (X^h)_{h\in (0,h_\star)}, \quad q\geq 0\quad \text{and a}\quad \gamma>\Phi(q).
\end{equation*} 
We insist that the dependence on $x$ of the error estimates will be kept explicit throughout, whereas the dependence on the L\'evy triplet, $q$  and $\gamma$ will be subsumed in the capital (or small) $O$ ($o$) notation. In particular, the notation $f(x,h)=g(x,h)+l(x)O(h)$, $x\in A$, means that $l(x)>0$ for $x\in A$ and: $$\sup_{x\in A}\vert (f(x,h)-g(x,h))/l(x)\vert=O(h)$$ and analogously when $O(h)$ is replaced by $o(h)$ etc. Further, we shall sometimes resort to the notation: $$A(s):=A^0(s):=\psi(\gamma+is)-q\ (\text{for }s\in \mathbb{R})\text{ and }A^h(s):=\psi^h(\gamma+is)-q\ (\text{for }s\in [-\pi/h,\pi/h]),$$ $h\in (0,h_\star)$, where reference to $q$ and $\gamma$ has been suppressed. We stress that in Subsections~\ref{subsection:the_difference}-\ref{subsection:functionals_of_scale_fncs_conv} we shall have throughout: 
\begin{equation*}
\beta:=\gamma+is.
\end{equation*}
\noindent The remainder of our analysis in this section will proceed as follows. First we list in Subsection~\ref{subsection:auxiliary_technical_results}, for the readers convenience, a number of auxiliary technical results.
Their proofs, which are independent of the analysis in Section~\ref{section:convergence_rates}, are relegated to Appendices~\ref{appendix:technical_lemmas} and~\ref{appendix:some_asymptotic_properties_of_measures_on_R}. Then Subsection~\ref{subsection:the_difference} estimates the absolute difference $\vert \psi^h-\psi\vert$, Subsection~\ref{subsection:_the_difference_of_derivatives} analyzes the derivatives $A^{h\prime}$ and the difference $\vert A^{h\prime}-A'\vert$, while in Subsection~\ref{subsection:coercivity} we prove suitable coercivity of $\vert \psi^h-q\vert$. Subsections~\ref{subsection:scales:BMdrift}-\ref{subsection:sclaes:diffusion0infvariation} deal with the various cases of convergence for the scale functions. Subsection~\ref{subsection:functionals_of_scale_fncs_conv}  establishes a convergence result for the derivatives of $\Wq$ in the case when $\diffusion>0$. 

\subsection{Auxiliary technical results}\label{subsection:auxiliary_technical_results}
[Apart from the notation of Subsection~\ref{subsection:further_general_notation}, the contents of this subsection is notationally and otherwise independent from the remainder of the text. For proofs see Appendices~\ref{appendix:technical_lemmas} and~\ref{appendix:some_asymptotic_properties_of_measures_on_R}.]

\subsubsection{Some estimates and bounds}\label{subsection:some_technical_estimates_and_bounds}

\begin{lemma}\label{lemma:technical:estimates}
For every $\gamma_\star\in\mathbb{R}_+$ and $h_\star\in \mathbb{R}^+$, there is an $A_0\in (0,\infty)$ such that for all $\gamma\in [-\gamma_\star,\gamma_\star]$, $h\in (0,h_\star)$ and then all $s\in [-\pi/h,\pi/h]$ (with $\beta=\gamma+is$):
\begin{enumerate}[(i)]
\item\label{lemma:technical:estimates:i} $\left\vert\frac{1}{2h}(e^{\beta h}-e^{-\beta h})\right\vert\leq A_0\vert \beta\vert$. 
\item\label{lemma:technical:estimates:ii} $\vert e^{\beta h}-1\vert\leq A_0\vert \beta \vert h$.
\item\label{lemma:technical:estimates:iii} $\left\vert \frac{1}{h}(e^{\beta h}-1)-\beta\right\vert\leq A_0h \vert\beta \vert^2$.
\item\label{lemma:technical:estimates:iv} $\left\vert\frac{1}{2h^2}(e^{\beta h}+e^{-\beta h}-2)\right\vert\leq A_0\vert \beta\vert^2$. 
\item\label{lemma:technical:estimates:rough:I} $\left\vert\frac{1}{2h^2}(e^{\beta h}+e^{-\beta h}-2-(\beta h)^2)\right\vert\leq A_0 h^2\vert \beta\vert^4.$
\item \label{lemma:technical:estimates:rough:II} $\left\vert \frac{1}{2h}(e^{\beta h}-e^{-\beta h}-2\beta h)\right\vert\leq A_0 h^2\vert \beta\vert^3$.
\end{enumerate}

Further:
\begin{enumerate}[(a)]
\item \label{lemma:technical:coercivity:xx} For any $\xi \in [0,2\pi)$,
$\left(u\mapsto (1-\cos(u))/u^2\right)$ is bounded away from $0$ for $u\in [-\xi,\xi]$. 
\item \label{lemma:technical:coercivity:a} For any $\xi\in [0,2\pi)$,
$\left(u\mapsto \frac{1}{u^2}\left(\cosh(u)-1\right)\right)$ is bounded away from $0$ for $\Im u\in [-\xi,\xi]$. 
\item \label{lemma:technical:coercivity:b} For any $\xi\in [0,2\pi)$ and $L\in\mathbb{R}$,
$\left(u\mapsto \frac{1}{u}\left(e^u-1\right)\right)$ is bounded away from $0$ for $\Im u\in [-\xi,\xi]$ and $\Re u\geq L$. 
\end{enumerate}
\end{lemma}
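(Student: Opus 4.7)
The plan is to base everything on the single uniform bound $|\beta h|\leq \gamma_\star h_\star+\pi=:M$, which holds throughout the stipulated parameter range (since $|s|h\leq \pi$ and $|\gamma|h\leq \gamma_\star h_\star$). With that in hand, the six estimates (i)--(vi) become pure Taylor-expansion exercises for entire functions restricted to the compact disk $\{|z|\leq M\}$. Concretely, I would rewrite each left-hand side as a power of $\beta$ times a function of the single variable $z:=\beta h$, using identities such as
\[
\tfrac{e^{\beta h}-e^{-\beta h}}{2h}=\beta\cdot \tfrac{\sinh z}{z},\quad \tfrac{e^{\beta h}-1}{h}-\beta=\beta^2 h\cdot \tfrac{e^z-1-z}{z^2},\quad \tfrac{e^{\beta h}+e^{-\beta h}-2}{2h^2}=\beta^2\cdot \tfrac{\cosh z-1}{z^2},
\]
and the analogous higher-order identities for (v),(vi):
\[
\tfrac{e^{\beta h}+e^{-\beta h}-2-(\beta h)^2}{2h^2}=\beta^4 h^2\cdot \tfrac{\cosh z-1-z^2/2}{z^4},\quad \tfrac{e^{\beta h}-e^{-\beta h}-2\beta h}{2h}=\beta^3 h^2\cdot \tfrac{\sinh z-z}{z^3}.
\]
Each bracketed function has a removable singularity at $z=0$ (inspection of the power series) and is therefore bounded on $\{|z|\leq M\}$. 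I would then take $A_0$ to be the maximum of the six resulting suprema, together with the constant $e^M$ (which handles (ii) via $|e^z-1|\leq|z|e^{|z|}$).

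The three coercivity items (a)--(c) are soft continuity-and-compactness arguments. For (a), $(1-\cos u)/u^2$ extends continuously to $1/2$ at $u=0$ via the Taylor expansion of $\cos$, and it has no other zero in $[-\xi,\xi]$ since $\cos u=1$ only on $2\pi\ZZ$, which meets $[-\xi,\xi]$ only at $0$ when $\xi<2\pi$; hence the continuous extension attains a positive minimum on this compact interval. For (b), I would handle the noncompact strip $\{\Im u\in[-\xi,\xi]\}$ by splitting off a bounded disk around $0$: on the complement, the identity $|\cosh(v+iw)|^2=\sinh^2 v+\cos^2 w$ shows that $|\cosh u-1|/|u|^2\to\infty$ as $|\Re u|\to\infty$ uniformly in $w\in[-\xi,\xi]$; on the remaining compact piece the function extends continuously to $1/2$ at $0$ and has no further zero in the strip (since $\cosh u=1\Leftrightarrow u\in 2\pi i\ZZ$, and $2\pi i\ZZ\cap\{\Im u\in[-\xi,\xi]\}=\{0\}$), so compactness delivers the positive bound. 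The argument for (c) is identical in spirit: $(e^u-1)/u$ extends continuously to $1$ at $u=0$; its only zero in $\{\Re u\geq L,\ \Im u\in[-\xi,\xi]\}$ (if any) is $u=0$; and $|e^u-1|\geq e^{\Re u}-1\to\infty$ as $\Re u\to\infty$, so compactness on the remaining bounded subregion yields a positive infimum.

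The whole lemma is essentially bookkeeping --- there is no substantive obstacle. The one point that demands the smallest amount of care is the noncompactness of the strip in (b) and (c); this is precisely what the exponential growth of $|\cosh u|$ and $|e^u|$ with $|\Re u|$ absorbs, reducing each coercivity statement to a standard continuity argument on a compact subset.
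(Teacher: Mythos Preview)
Your proof is correct. For parts (i)--(vi) you do exactly what the paper does (Taylor expansion on the compact disk $\{|z|\le M\}$, $M=\gamma_\star h_\star+\pi$), only you spell out the factorizations explicitly where the paper just says ``expansion into Taylor series which converge absolutely and locally uniformly.''

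For (a)--(c) your route differs in style from the paper's. You argue by continuity-plus-compactness: extend each function continuously across $u=0$, check that the extension has no zeros in the relevant closed region, absorb the unbounded direction (in (b) and (c)) by exponential growth, and invoke the extreme-value theorem on what remains. The paper instead writes down explicit modulus-squared identities --- e.g.\ for (b),
\[
\left|\tfrac{\cosh u-1}{u^2}\right|^2=\frac{[(\cosh\gamma_0-1)+(1-\cos s_0)]^2}{(\gamma_0^2+s_0^2)^2}\quad (u=\gamma_0+is_0),
\]
and for (a) uses $1-\cos u=2\sin^2(u/2)$ together with concavity of $\sin$ on $[0,\xi/2]$ --- and bounds these from below directly. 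The paper's identities give, in principle, quantitative lower bounds uniform over the whole strip without any splitting into compact-plus-tail; your compactness argument is softer and non-constructive but perfectly adequate for the lemma as stated. Either approach is fine here since the lemma is only used qualitatively downstream.
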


\begin{remark}
In \ref{lemma:technical:coercivity:xx}-\ref{lemma:technical:coercivity:b}, at $u=0$, the relevant limit (which exists) is taken, in order to make the mappings well-defined in this single point.
\end{remark}

\begin{lemma}\label{lemma:convergence:a_uniformly_bounded_family}
The family of functions $f_{\gamma_0}:[-\pi,\pi]\to\mathbb{R}$, defined by: $f_{\gamma_0}(s_0):=\left(1-\frac{\gamma_0+is_0}{1-e^{-\gamma_0-is_0}}\right)\frac{1}{\gamma_0+is_0}$, $s_0\in [-\pi,\pi]$, is uniformly bounded for $\gamma_0$ belonging to bounded subsets of $\mathbb{R}\backslash \{0\}$. 
\end{lemma}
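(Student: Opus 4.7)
The plan is to treat $f_{\gamma_0}(s_0)$ as the restriction of a single holomorphic function of the complex variable $w := \gamma_0 + i s_0$. Set
\[
g(w) := \frac{1}{w} - \frac{1}{1-e^{-w}},
\]
so that $f_{\gamma_0}(s_0) = g(\gamma_0+is_0)$ after distributing the factor $1/(\gamma_0+is_0)$. The only points of $\mathbb{C}$ at which $g$ can fail to be defined are the zeros of $1-e^{-w}$, namely $w = 2\pi i k$ for $k \in \mathbb{Z}$. I would first dispose of $w = 0$ by a direct Taylor-series calculation: from $1 - e^{-w} = w\bigl(1 - w/2 + w^2/6 - \cdots\bigr)$ one obtains, for $w$ in a punctured neighborhood of $0$,
\[
\frac{1}{1-e^{-w}} \;=\; \frac{1}{w}\cdot\frac{1}{1 - w/2 + O(w^2)} \;=\; \frac{1}{w} + \frac{1}{2} + O(w),
\]
so $g(w) = -\tfrac{1}{2} + O(w)$. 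Hence the singularity at $w=0$ is removable, and $g$ extends analytically (in particular, continuously) to $\mathbb{C}\setminus\{2\pi i k : k\in\mathbb{Z}\setminus\{0\}\}$.

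Having established this, the boundedness claim becomes a compactness argument. Given any bounded $B\subset \mathbb{R}\setminus\{0\}$, pick $M>0$ with $B\subset[-M,M]$ and consider
\[
K \;:=\; \{\,\gamma_0 + i s_0 : |\gamma_0|\le M,\; |s_0|\le \pi\,\} \;\subset\; \mathbb{C}.
\]
The set $K$ is compact, and since $|s_0|\le \pi < 2\pi$ it is disjoint from the pole set $\{2\pi i k : k\in\mathbb{Z}\setminus\{0\}\}$. Thus the continuous extension of $g$ attains a finite maximum $C$ on $K$, giving
\[
\sup_{\gamma_0 \in B,\, s_0 \in [-\pi,\pi]} |f_{\gamma_0}(s_0)| \;\le\; \sup_{w \in K} |g(w)| \;=\; C \;<\; \infty,
\]
as required. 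The only step that is not entirely routine is verifying the removable singularity at the origin, since $B$ may accumulate at $0$, but the Taylor expansion above handles this at once; everywhere else on $K$ the continuity of $g$ is automatic from that of $1/w$ and $1/(1-e^{-w})$.
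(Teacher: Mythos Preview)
Your proof is correct. The paper takes a closely related but more hands-on route: it rewrites
\[
f_{\gamma_0}(s_0)=\frac{1-e^{-u}-u}{(1-e^{-u})\,u},\qquad u=\gamma_0+is_0,
\]
then bounds the numerator by $O(|u|^2)$ via Taylor expansion and bounds the denominator from below by a constant times $|u|^2$ using Lemma~\ref{lemma:technical:estimates}\ref{lemma:technical:coercivity:b} (which says $u\mapsto (e^{u}-1)/u$ is bounded away from zero on the relevant strip). Your removable-singularity-plus-compactness argument is cleaner and sidesteps the need for that explicit lower bound, at the cost of being non-constructive; the paper's version would yield an explicit constant if one were needed elsewhere. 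Both proofs ultimately rest on the same Taylor computation at $u=0$.
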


\begin{lemma}\label{lemma:fundamental:inequalities}
\begin{enumerate}[(i)]
\item\label{lemma:technical:3rdorder:i} For any $z\in \Cll$: $\vert e^z-1\vert\leq \vert z\vert$.
\item\label{lemma:technical:3rdorder:ii} There exists $C\in (0,\sqrt{5/2}]$, such that for any $z\in \Cll$: $\vert e^z-z-1\vert\leq C\vert z\vert^2$.
\end{enumerate}
\label{lemma:technical:3rdorder}
\end{lemma}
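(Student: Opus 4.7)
The plan for part (i) is to use the fundamental theorem of calculus to write
\[
e^z - 1 = \int_0^1 \frac{d}{dt}e^{tz}\,dt = z\int_0^1 e^{tz}\,dt,
\]
and then estimate by the triangle inequality:
\[
|e^z-1| \le |z|\int_0^1 |e^{tz}|\,dt = |z|\int_0^1 e^{t\Re z}\,dt.
\]
Since $z \in \Cll$ means $\Re z \le 0$, we have $e^{t\Re z}\le 1$ for $t\in[0,1]$, so the last integral is bounded by $1$ and the desired inequality follows.

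For part (ii), the strategy is analogous but using a second-order Taylor expansion with integral remainder. Specifically, I would derive (by integration by parts applied to $\int_0^1 e^{tz}\,dt$, or equivalently by the Taylor remainder formula applied to $t\mapsto e^{tz}$ at $t=0$ and $t=1$) the identity
\[
e^z - 1 - z = z^2\int_0^1 (1-t)e^{tz}\,dt.
\]
Then exactly as in (i), the triangle inequality gives
\[
|e^z - 1 - z| \le |z|^2\int_0^1 (1-t)e^{t\Re z}\,dt \le |z|^2 \int_0^1(1-t)\,dt = \tfrac{1}{2}|z|^2,
\]
using $\Re z\le 0$ in the middle step. This yields the bound with constant $C=\tfrac{1}{2}$, which is comfortably inside the stated range $(0,\sqrt{5/2}]$.

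There is no real obstacle here: the whole argument rests on two clean integral representations of $e^z$, and the crucial sign condition $\Re z\le 0$ enters only through the trivial bound $e^{t\Re z}\le 1$ on the exponential weights in the respective integrals. The only stylistic choice is whether to derive the second identity via integration by parts or to invoke Taylor's theorem with remainder directly; either is routine. I expect the author's proof to be essentially identical, possibly stated even more compactly. The looser constant $\sqrt{5/2}$ in the statement presumably reflects either an alternative (perhaps real-variables) derivation that only gives this weaker constant, or simply that the exact value of $C$ is immaterial for the subsequent applications.
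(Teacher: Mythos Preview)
Your proof is correct and in fact cleaner than the paper's. The paper argues via a real-variables decomposition: writing $z=\gamma_0+is_0$, it computes $|e^z-1|^2=(e^{\gamma_0}-1)^2+2e^{\gamma_0}(1-\cos s_0)$ explicitly and then applies the elementary bounds $1-e^{\gamma_0}\le -\gamma_0$, $e^{\gamma_0}\le 1$, $1-\cos s_0\le s_0^2/2$ for part~(i); for part~(ii) it similarly splits $e^z-z-1$ into four real pieces and bounds each by hand. Your integral-remainder approach bypasses all of this bookkeeping and delivers the sharp constant $C=\tfrac12$ directly, whereas the paper's term-by-term estimates are what produce the looser $\sqrt{5/2}$ in the statement. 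The trade-off is only one of style: the paper's route is entirely elementary (no calculus beyond derivative comparison), while yours is more conceptual and extends immediately to higher-order remainders.
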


\begin{lemma}\label{lemma:technical:2ndorder}
\begin{enumerate}[(i)]
\item\label{lemma:technical:2ndorder:i} Let $\{x,y\}\subset \mathbb{R}_-$, $\vert x-y\vert\leq h/2\leq \vert y\vert$. Then for any $\alpha\in\Crr$ we have: $\left\vert e^{\alpha x}-\alpha x-(e^{\alpha y}-\alpha y)\right\vert\leq 2 h \vert\alpha\vert^2\vert y\vert$.
\item\label{lemma:technical:2ndorder:ii} There exists $C\in (0,\infty)$ such that whenever $\{x,y\}\subset \mathbb{R}_-$, $\alpha\in\Crr$ and $\vert x-y\vert\leq h/2\leq \vert y\vert$, we have: $\left\vert e^{\alpha x}-\alpha x-\frac{\alpha^2x^2}{2}-\left(e^{\alpha y}-\alpha y-\frac{\alpha^2y^2}{2}\right)\right\vert\leq C hy^2\vert \alpha\vert^3$.
\end{enumerate}
\end{lemma}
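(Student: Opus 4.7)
The plan is to derive both estimates from the fundamental theorem of calculus combined with Lemma~\ref{lemma:fundamental:inequalities}. For (i), I would introduce the auxiliary function $F(u):=e^{\alpha u}-\alpha u$ on the real segment joining $x$ and $y$, noting $F'(u)=\alpha(e^{\alpha u}-1)$. Since $u$ remains in $\mathbb{R}_-$ and $\alpha\in\Crr$, we have $\alpha u\in\Cll$, so Lemma~\ref{lemma:fundamental:inequalities}\ref{lemma:technical:3rdorder:i} yields $|F'(u)|\leq|\alpha|^2|u|$. The hypothesis $|x-y|\leq h/2\leq|y|$ implies via the triangle inequality that $|u|\leq|y|+h/2\leq 2|y|$ uniformly in $u$ between $x$ and $y$. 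Integrating $F'$ over this segment then gives
\[
|F(x)-F(y)|\leq |x-y|\cdot\sup|F'|\leq \tfrac{h}{2}\cdot 2|y||\alpha|^2=h|y||\alpha|^2,
\]
which already yields (i) (with constant $1$, stronger than the claimed $2$).

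For (ii), the analogous function is $G(u):=e^{\alpha u}-\alpha u-\tfrac{1}{2}\alpha^2u^2$, whose derivative reads $G'(u)=\alpha(e^{\alpha u}-1-\alpha u)$. Applying Lemma~\ref{lemma:fundamental:inequalities}\ref{lemma:technical:3rdorder:ii} to $z=\alpha u\in\Cll$ yields $|G'(u)|\leq C|\alpha|^3u^2\leq 4C|\alpha|^3y^2$ using again the bound $|u|\leq 2|y|$. Integration over the segment then produces
\[
|G(x)-G(y)|\leq\tfrac{h}{2}\cdot 4C|\alpha|^3y^2=2Ch|\alpha|^3y^2,
\]
establishing (ii) with absolute constant $2C$ (independent of $x,y,\alpha,h$).

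The whole argument is a Lagrange-type estimate and the non-routine content has already been absorbed into Lemma~\ref{lemma:fundamental:inequalities}; the only mildly substantive step is the observation $|u|\leq 2|y|$ on the entire segment joining $x$ to $y$, which is precisely what converts the hypothesis $h/2\leq|y|$ into the factor of $|y|$ (respectively $y^2$) on the right-hand side. I do not anticipate any real obstacle.
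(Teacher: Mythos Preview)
Your argument is correct and essentially follows the paper's approach: the paper simply writes ``Apply the complex Mean Value Theorem \cite[p.~859, Theorem~2.2]{evard}'', which amounts to the same derivative bound you obtain via the fundamental theorem of calculus along the real segment $[x,y]$ together with Lemma~\ref{lemma:fundamental:inequalities}. Your route is arguably the more elementary of the two, since integrating a complex-valued $C^1$ function over a real interval avoids the need to cite a complex mean value theorem, and it even delivers (i) with constant~$1$ rather than~$2$.
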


\subsubsection{Some asymptotic properties at $0$ of measures on $\mathbb{R}$}\label{subsection:some_asymptotic_properties_at_zero}

Let $\nu$ be a measure on $\mathbb{R}$. 
\begin{proposition}\label{appendix:proposition:integrability:main}
If $\nu$ is compactly supported and locally finite in $\mathbb{R}\backslash \{0\}$, then: $\int\vert x\vert \nu(dx)<\infty$, precisely when $\int_1^\infty \frac{ds}{s^2}\int \nu(dx)(1-\cos(sx))<\infty.$
\end{proposition}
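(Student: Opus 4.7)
The plan is to apply Tonelli's theorem (justified since the integrand is nonnegative) to exchange the order of integration, so that
\begin{equation*}
\int_1^\infty \frac{ds}{s^2}\int \nu(dx)(1-\cos(sx)) = \int \nu(dx)\int_1^\infty \frac{1-\cos(sx)}{s^2}ds,
\end{equation*}
and then to reduce the inner integral to a function of $|x|$ alone.

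For fixed $x \ne 0$, using that $\cos$ is even and substituting $u = s|x|$, I would show
\begin{equation*}
\int_1^\infty \frac{1-\cos(sx)}{s^2}ds = |x|\int_{|x|}^\infty \frac{1-\cos u}{u^2}du = |x|\, g(|x|),
\end{equation*}
where $g(t):=\int_t^\infty \frac{1-\cos u}{u^2}du$. The map $g$ is continuous on $[0,\infty)$ and decreasing (its derivative is $-(1-\cos t)/t^2 \leq 0$); moreover, invoking the classical identity $g(0)=\int_0^\infty \frac{1-\cos u}{u^2}du = \pi/2$ (which follows from a standard contour or Plancherel computation and which I would quote), one has $g(0)=\pi/2 > 0$ and $g(t) > 0$ for all $t\geq 0$.

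The final step exploits that $\nu$ is compactly supported, say $\mathrm{supp}(\nu)\subset [-R,R]$. Then for every $x\in \mathrm{supp}(\nu)$ one has $g(|x|)\in [g(R),\pi/2]$, a bounded interval contained in $(0,\infty)$. Consequently there are constants $0 < c \leq C < \infty$ (depending only on $R$) such that $c\,|x| \leq |x|\,g(|x|) \leq C\,|x|$ for all $x\in\mathrm{supp}(\nu)$, and therefore
\begin{equation*}
\int \nu(dx)\int_1^\infty \frac{1-\cos(sx)}{s^2}ds = \int |x|\, g(|x|)\, \nu(dx)
\end{equation*}
is finite if and only if $\int |x|\,\nu(dx)$ is. (The singleton $\{0\}$ contributes nothing to either integral since $1-\cos(s\cdot 0)=0$, so the local finiteness on $\mathbb{R}\setminus\{0\}$ is enough to avoid any issue at the origin.)

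There is no real obstacle in this argument; the only care required is in pinning down the quoted value $g(0)=\pi/2$ (and the positivity of $g$ on $[0,R]$), which is what turns the kernel $g(|x|)$ into a weight comparable to the constant $1$ on $\mathrm{supp}(\nu)$, thereby converting the question of integrability of $(1-\cos(sx))/s^2$ against $\nu\otimes\mathrm{Leb}$ into the plain integrability of $|x|$ against $\nu$.
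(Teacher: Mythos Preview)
Your argument is correct and takes a genuinely different route from the paper's. The paper also begins by exchanging the order of integration, but then attacks the inner integral via integration by parts, rewriting $\int_1^\infty \frac{1-\cos(sx)}{s^2}\,ds$ as $(1-\cos x) + x\int_1^\infty \frac{\sin(sx)}{s}\,ds$, and then appeals to boundedness of the sine integral together with the limit $\int_{|x|}^\infty \frac{\sin u}{u}\,du \to \int_0^\infty \frac{\sin u}{u}\,du > 0$ as $x\to 0$; the converse direction in the paper then invokes local finiteness of $\nu$ away from $0$ to localise the divergence of $\int |x|\,\nu(dx)$ near the origin. Your direct substitution $u=s|x|$ is cleaner: it produces the kernel $g(|x|)=\int_{|x|}^\infty \frac{1-\cos u}{u^2}\,du$, which is manifestly bounded above and below by positive constants on $[0,R]$, so that the equivalence falls out of a two-sided comparison without any separate treatment of the converse and without needing local finiteness at all. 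One small remark: you do not actually need the precise value $g(0)=\pi/2$; all that is required is $g(0)<\infty$ (immediate from $1-\cos u\leq u^2/2$ near $0$) and $g(R)>0$ (immediate since $1-\cos u$ does not vanish identically on $[R,\infty)$), so your proof can be made entirely self-contained.
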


\begin{lemma}\label{lemma:fubini}
Let $r\geq 0$. Then:
\begin{enumerate}[(i)]
\item\label{fubini:one} $\int_{[0,r]} x \nu(dx)=\int_{[0,r]}\nu((t,r])dt$.
\item\label{fubini:two} $\int_{[-r,r]}\vert x\vert \nu(dx)=\int_{[0,r]}\nu([-r,r]\backslash [-t,t])dt$.
\item\label{fubini:three} $\int_{[-1,1]\backslash [-r,r]}\vert x\vert \nu(dx)=rg(r)+\int_{(r,1]}g(t)dt$ whenever $r\leq 1$ and with $g(r):=\nu([-1,1]\backslash [-r,r])$.
\item\label{fubini:four} $\int_{[0,r]}x^2\nu(dx)=2\int_{[0,r]}t\nu((t,r])dt$.
\item\label{fubini:five} $\int_{[-r,r]}x^2\nu(dx)=2\int_{[0,r]}t\nu([-r,r]\backslash [-t,t])dt$.
\end{enumerate}
\end{lemma}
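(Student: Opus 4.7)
The plan is to derive all five identities as straightforward applications of the layer-cake representation combined with Tonelli's theorem. Tonelli applies throughout with no integrability hypothesis, since every integrand is nonnegative; this is the only measure-theoretic input required.

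For (i), I would start from the elementary identity $x = \int_0^r \mathbbm{1}_{(t,r]}(x)\, dt$, valid for $x \in [0,r]$, multiply by $\nu(dx)$, and interchange the order of integration. The inner integral then collapses to $\nu((t,r])$, giving the claim. Identity (ii) is obtained by the same device with $x$ replaced by $|x|$ and the outer integration taken over $[-r,r]$; the inner integral becomes $\nu(\{x \in [-r,r] : |x| > t\}) = \nu([-r,r] \setminus [-t,t])$. For (iv) and (v) I would replace the layer-cake representation by $x^2 = \int_0^{|x|} 2t\, dt$, restricted to $[0,r]$ (respectively $[-r,r]$), and interchange in exactly the same way; the factor $2t$ simply passes through the swap and produces the weight in the outer integral.

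The only mildly nontrivial case is (iii), which needs a small preliminary decomposition. For $|x| \in [r,1]$ I would write $|x| = r + \int_r^{|x|} du$ and split the integral over $[-1,1] \setminus [-r,r]$ accordingly. The constant term contributes $r \cdot \nu([-1,1] \setminus [-r,r]) = r g(r)$. For the remaining part, an application of Tonelli yields
\[
\int_r^1 \nu\bigl(\{x \in [-1,1] \setminus [-r,r] : |x| > u\}\bigr)\, du;
\]
but for $u \in (r,1]$ the condition $|x| > u$ already forces $|x| > r$, so the set reduces to $[-1,1] \setminus [-u,u]$, whose $\nu$-measure is $g(u)$ by definition. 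Combining the two contributions gives the stated formula.

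There is no genuine obstacle here; the whole lemma is a bookkeeping exercise in layer-cake plus Tonelli. The one point worth watching is the choice of half-open versus closed intervals at the endpoints (and the single point $\{r\}$ on the diagonal): these only produce $dt$-a.e.\ discrepancies and so do not affect the Lebesgue integrals on the right-hand sides, which is why each identity can be written cleanly with the interval conventions chosen.
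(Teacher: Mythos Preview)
Your proposal is correct and is precisely the argument the paper has in mind: its proof consists of the single phrase ``Fubini's Theorem,'' and you have faithfully unpacked that into the layer-cake/Tonelli computations for each of the five items. The approaches are identical; you have simply supplied the routine details.
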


\begin{proposition}\label{proposition:generally_on_measure_asymptotics}
Suppose $g(\delta):=\nu([-1,1]\backslash [-\delta,\delta])\sim 1/\delta^{1+\alpha}$ as $\delta\downarrow 0$, so that in particular $g(\delta)$ is finite for all $0<\delta\leq 1$ and necessarily $\alpha\geq -1$. Then:
\begin{enumerate}[(a)]
\item\label{asymptoptic:a} $\gamma(\delta):=\delta^2\nu([-1,1]\backslash [-\delta,\delta])\sim \delta^{1-\alpha}$ as $\delta\downarrow 0$. 
\item\label{asymptoptic:b} $\int_{[-1,1]} x^2\nu(dx)<\infty$, iff $\alpha<1$. If $\alpha\in (-1,1)$, then $\int_{[-\delta,\delta]} x^2\nu(dx)\sim \delta^{1-\alpha}$ as $\delta\downarrow 0$. 
\item\label{asymptoptic:c} $\nu$ is a L\'evy measure, iff $\nu(\mathbb{R}\backslash [-1,1])<\infty$, $\nu(\{0\})=0$ and $\alpha<1$. 
\item\label{asymptoptic:d} $\int_{[-1,1]} \vert x\vert \nu(dx)=\infty$, iff $0\leq \alpha$. 
\item\label{asymptoptic:e} Finally, as $\delta\downarrow 0$, if $\alpha>0$, $\int_{(\delta,1]}g(t)dt\sim \delta^{-\alpha}$ and if $\alpha=0$, then $\int_{(\delta,1]}g(t)dt\sim \vert \log\delta\vert$. In particular, $\zeta(\delta):=\delta \int_{[-1,1]\backslash [-\delta,\delta]}\vert x\vert \nu(dx)\sim \delta^{1-\alpha}$, when $\alpha>0$, respectively $\zeta(\delta)\sim \delta\vert \log\delta\vert$, when $\alpha=0$.
\end{enumerate}
\end{proposition}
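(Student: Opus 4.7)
The proof is a direct computational exercise, combining the hypothesised tail $g(\delta)\sim \delta^{-(1+\alpha)}$ with the ``integration by parts'' identities of Lemma~\ref{lemma:fubini} and elementary Karamata-type asymptotics. Throughout, write $c:=\lim_{\delta\downarrow 0}g(\delta)\delta^{1+\alpha}\in(0,\infty)$. Part~\ref{asymptoptic:a} is then immediate upon multiplying through by $\delta^2$: $\gamma(\delta)=\delta^2 g(\delta)\sim \delta^{1-\alpha}$.

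For part~\ref{asymptoptic:b}, I would apply Lemma~\ref{lemma:fubini}\ref{fubini:five} at $r=1$ to obtain $\int_{[-1,1]}x^2\,\nu(dx)=2\int_0^1 tg(t)\,dt$; since $tg(t)\sim ct^{-\alpha}$ near the origin, this is finite precisely when $\alpha<1$. For the refined asymptotic, apply the same lemma at $r=\delta$: for $t\leq \delta$ one has $\nu([-\delta,\delta]\setminus[-t,t])=g(t)-g(\delta)$, whence
\[
\int_{[-\delta,\delta]}x^2\,\nu(dx)=2\int_0^\delta tg(t)\,dt-\delta^2 g(\delta)\sim \tfrac{2c}{1-\alpha}\,\delta^{1-\alpha}-c\,\delta^{1-\alpha}=\tfrac{c(1+\alpha)}{1-\alpha}\,\delta^{1-\alpha},
\]
and the coefficient $c(1+\alpha)/(1-\alpha)$ is strictly positive throughout $\alpha\in(-1,1)$. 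Part~\ref{asymptoptic:c} then follows by combining~\ref{asymptoptic:b} with the standard characterisation of a L\'evy measure (finiteness off a neighbourhood of $0$, no atom at $0$, integrability of $x^2\wedge 1$).

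Parts~\ref{asymptoptic:d} and~\ref{asymptoptic:e} both hinge on Lemma~\ref{lemma:fubini}\ref{fubini:three}:
\[
\int_{[-1,1]\setminus[-r,r]}|x|\,\nu(dx)=rg(r)+\int_{(r,1]}g(t)\,dt,\qquad r\in(0,1].
\]
A direct computation from $g(t)\sim ct^{-(1+\alpha)}$ (splitting off a fixed neighbourhood of $0$) yields $\int_{(\delta,1]}g(t)\,dt\sim c\delta^{-\alpha}/\alpha$ for $\alpha>0$, $\int_{(\delta,1]}g(t)\,dt\sim c|\log\delta|$ for $\alpha=0$, and boundedness of this integral as $\delta\downarrow 0$ for $\alpha<0$. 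Combined with $rg(r)\sim cr^{-\alpha}$ and monotone convergence (note $|x|$ vanishes at $0$, so any atom of $\nu$ there is harmless), this gives $\int_{[-1,1]}|x|\,\nu(dx)<\infty$ iff $\alpha<0$, i.e.~\ref{asymptoptic:d}. For~\ref{asymptoptic:e}, plugging $r=\delta$ into the displayed identity and multiplying by $\delta$ yields $\zeta(\delta)=\delta^2 g(\delta)+\delta\int_{(\delta,1]}g(t)\,dt$, and the stated asymptotic follows in each regime: for $\alpha>0$ the two positive summands are of matching order $\delta^{1-\alpha}$, whereas for $\alpha=0$ the second summand $\sim c\delta|\log\delta|$ dominates the first.

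No step is conceptually subtle; the only mild care required is in part~\ref{asymptoptic:b}, where the two positive terms $2\int_0^\delta tg(t)\,dt$ and $\delta^2 g(\delta)$ must be shown not to cancel to a lower order, which the explicit positive coefficient $c(1+\alpha)/(1-\alpha)$ ensures on the full range $\alpha\in(-1,1)$.
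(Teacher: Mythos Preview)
Your proof is correct and follows precisely the approach the paper indicates: the paper's proof consists of the single sentence ``Apply Lemma~\ref{lemma:fubini},'' and you have carried out exactly that application, supplying the Karamata-type computations that the paper omits. Your handling of the potential cancellation in part~\ref{asymptoptic:b} via the explicit positive constant $c(1+\alpha)/(1-\alpha)$ is the right way to close that step.
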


\begin{proposition}\label{proposition:generally_on_measure_asymptotics_bis}
Define $\lambda(dx)=\mathbbm{1}_{(-1,1)}(x)\vert x\vert\nu(dx)$. Furthermore, let $\alpha\in (0,1)$, with $\limsup_{\delta\downarrow 0}\nu((-1,1)\backslash [-\delta,\delta])\delta^{1+\alpha}<\infty$. Then each of the quantities $\limsup_{\delta\downarrow 0}\lambda((-1,1)\backslash [-\delta,\delta])\delta^{\alpha}$, $\limsup_{\delta\downarrow 0}\int_{[-\delta,\delta]}x^2\nu(dx)\delta^{\alpha-1}$ and $\sup_{s\in \mathbb{R}\backslash \{0\}}\frac{1}{\vert s\vert^\alpha}\vert \int (e^{isy}-1)\lambda(dy)\vert$ is finite.
\end{proposition}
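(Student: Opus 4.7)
The plan is to prove the three assertions in order, each from the Fubini identities of Lemma~\ref{lemma:fubini} and the only consequence of the hypothesis I shall use: there exist $C_0<\infty$ and $\delta_0\in (0,1]$ such that $g(\delta):=\nu((-1,1)\setminus[-\delta,\delta])\leq C_0\delta^{-(1+\alpha)}$ for all $\delta\in(0,\delta_0]$, with $g\leq g(\delta_0)<\infty$ on $[\delta_0,1]$ automatically, by monotonicity. For the tail of $\lambda$, I would write $|y|=\int_0^{|y|}dt$ inside $\lambda((-1,1)\setminus[-\delta,\delta])$ and swap integrals (in the spirit of Lemma~\ref{lemma:fubini}~\ref{fubini:three}) to obtain a boundary piece $\delta g(\delta)$ plus $\int_\delta^1 g(t)\,dt$; the first is $O(\delta^{-\alpha})$ directly, and the second because $\int_\delta^{\delta_0}t^{-(1+\alpha)}dt=\tfrac{1}{\alpha}(\delta^{-\alpha}-\delta_0^{-\alpha})$ and $\int_{\delta_0}^1 g\leq g(\delta_0)$ is constant, where $\alpha>0$ is essential. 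For the second moment, Lemma~\ref{lemma:fubini}~\ref{fubini:five} gives
\[
\int_{[-\delta,\delta]}x^2\nu(dx)=2\int_0^\delta t\,\nu([-\delta,\delta]\setminus[-t,t])\,dt\leq 2\int_0^\delta t\,g(t)\,dt\leq \tfrac{2C_0}{1-\alpha}\delta^{1-\alpha}
\]
for $\delta\in(0,\delta_0]$, where $\alpha<1$ buys integrability of $t^{-\alpha}$ at $0$. This takes care of the first two claims.

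For the third, the main step, set $I(s):=\int(e^{isy}-1)\lambda(dy)$, and split the domain at the $|s|$-dependent scale $\eta:=1/|s|$. When $|s|\geq 1$, so $\eta\in(0,1]$, I handle the two pieces separately. On $|y|\leq\eta$, the elementary bound $|e^{isy}-1|\leq|s||y|$ from Lemma~\ref{lemma:fundamental:inequalities}~\ref{lemma:technical:3rdorder:i} yields a contribution dominated by $|s|\int_{[-\eta,\eta]}y^2\nu(dy)$, which by the second-moment estimate above is $O(|s|\,\eta^{1-\alpha})=O(|s|^\alpha)$. On $\eta<|y|<1$, the crude bound $|e^{isy}-1|\leq 2$ together with the tail estimate from the first step produces a contribution at most $2\lambda((-1,1)\setminus[-\eta,\eta])=O(\eta^{-\alpha})=O(|s|^\alpha)$. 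When $0<|s|<1$, I would instead use the single global bound $|I(s)|\leq |s|\int_{(-1,1)}y^2\nu(dy)$ (finite, by the second step applied at $\delta=\delta_0$ and local finiteness of $\nu$ on $\{\delta_0\leq |y|<1\}$) and note $|s|\leq|s|^\alpha$ since $|s|\leq 1$ and $\alpha<1$.

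The main obstacle, such as it is, lies in this third step and is essentially only bookkeeping: one must choose the cut-off $\eta=1/|s|$ to interpolate between the quadratic bound $|e^{isy}-1|\leq|sy|$ (efficient near $0$) and the trivial bound $|e^{isy}-1|\leq 2$ (efficient away from $0$), at which point the two contributions scale as $|s|\,\eta^{1-\alpha}=|s|^\alpha$ and $\eta^{-\alpha}=|s|^\alpha$ respectively, matching precisely thanks to the common hypothesis exponent $\alpha$. Everything else reduces to Fubini and integration of a power function.
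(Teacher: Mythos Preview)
Your argument is correct. The first two assertions are handled essentially as in the paper, via the Fubini identities of Lemma~\ref{lemma:fubini} (you invoke \ref{fubini:three} and \ref{fubini:five}; the paper uses \ref{fubini:three} for the first and then \ref{fubini:two} applied to $\lambda$ for the second, which comes to the same thing).

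For the third assertion your route genuinely differs from the paper's. The paper first reduces, without loss of generality, to $\nu$ supported on $(0,\infty)$ and $s>0$, then treats the imaginary and real parts of $\int(e^{isy}-1)\lambda(dy)$ separately: writing e.g.\ $\int\sin(sy)\lambda(dy)=s\int_0^1\cos(su)\hat\lambda(u)\,du$ with $\hat\lambda(u)=\lambda(u,1)\leq A u^{-\alpha}$, and then exploiting the sign oscillation of $\cos(su)$ to bound the integral by its contribution over the first period-and-a-half, $s\int_0^{3\pi/(2s)}\hat\lambda(u)\,du=O(s^\alpha)$. Your approach instead splits the integration in $y$ at the scale $\eta=1/|s|$, pairing the Lipschitz bound $|e^{isy}-1|\leq|sy|$ with the second-moment estimate near zero and the crude bound $|e^{isy}-1|\leq 2$ with the tail estimate away from zero. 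This is more elementary: it avoids the one-sided reduction, treats real and imaginary parts simultaneously, and needs no alternating-series reasoning. The paper's oscillatory argument, on the other hand, is the one that would generalise if one wanted sharper constants or to handle the endpoint $\alpha=1$ (where your $\int_0^\delta t^{-\alpha}dt$ would diverge but an alternating bound might still give something).
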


\subsection{Estimating the absolute difference $\vert \psi^h-\psi\vert$}\label{subsection:the_difference}
Recall that $\beta$ is defined to be $\gamma+is$ throughout. We establish in this subsection two key properties of the difference $\psi^h-\psi$: 
\begin{enumerate}[label=($\Delta_\arabic{*}$),ref=($\Delta_\arabic{*}$)]
\item\label{delta1} $\psi^h\to \psi$ as $h\downarrow 0$, uniformly in bounded subsets of $\Crr$. 
\item\label{delta2} There exists $A_0\in (0,\infty)$, such that for all $h\in (0,h_\star\land 2)$ and then all $s\in [-\pi/h,\pi/h]$, the following holds (see Table~\ref{table:reference} for values of the parameter $V$):
\begin{enumerate}[(i)]
\item When $\diffusion>0$: $$\vert \psi^h-\psi\vert(\beta)\leq A_0\left[h^2\vert \beta\vert^4+h\xi(h/2)\vert \beta\vert^3+h\vert \beta \vert+V\zeta(h/2)\vert \beta\vert^2\right].$$ In particular, if in addition $\kappa(0)<\infty$, we have: $$\vert \psi^h-\psi\vert(\beta)\leq A_0[ h^2\vert \beta\vert^4+h\vert \beta\vert^2].$$ If, moreover, $\measure(\RR)<\infty$, then $$\vert \psi^h-\psi\vert(\beta)\leq A_0[h^2\vert \beta\vert^4+h\vert \beta\vert].$$
\item When $\diffusion=0$: $$\vert \psi^h-\psi\vert(\beta)\leq A_0\left[h\xi(h/2)\vert \beta\vert^3+(h+\zeta(h/2))\vert \beta\vert^2\right].$$ If in addition $\kappa(0)<\infty$, then: $$\vert \psi^h-\psi\vert(\beta)\leq A_0h\vert \beta\vert^2.$$ 
\end{enumerate}
\end{enumerate}
\noindent \emph{Proof of \ref{delta1} and \ref{delta2}.}
Indeed, suppose $\diffusion>0$ (respectively $\diffusion=0$), so that we are working under scheme 1 (respectively scheme 2). We decompose, referring to \eqref{eq:laplace_exponent}, \eqref{eq:laplace_exponent:scheme1} and \eqref{eq:laplace_exponent:scheme2}, the difference $\psi^h-\psi$ into terms, which allow for straightforward estimates. To wit, for any $h_0\in (0,2]$ and $\rho_0>0$, there exists $A_0\in (0,\infty)$, such that for all $h\in (0,h_0)$ and then all $s\in [-\pi/h,\pi/h]$, as well as all $\rho\in [0,\rho_0]$  (with $\alpha=\rho+is$): 

\begin{enumerate}[(1)]
\item\label{item:diffusion>0:first} $\left\vert \diffusion\left(\frac{e^{\alpha h}+e^{-\alpha h}-2}{2h^2}-\frac{\alpha^2}{2}\right)\right\vert\leq A_0h^2\vert \alpha\vert^4$ by \ref{lemma:technical:estimates:rough:I}  of Lemma~\ref{lemma:technical:estimates} (respectively this is void).
\item\label{item:diffusion>0:2}  $\left\vert c_0^h\left(\frac{e^{\alpha h}+e^{-\alpha h}-2}{2h^2}-\frac{\alpha^2}{2}\right)\right\vert\leq A_0h^2\xi(h/2)\vert\alpha\vert^4$ by \ref{lemma:technical:estimates:rough:I}  of Lemma~\ref{lemma:technical:estimates}. 
\item\label{item:diffusion>0:3} By a direct Taylor expansion:
\begin{eqnarray*}
&\phantom{\leq}&\left\vert V\int_{[-h/2,0)}y^2\frac{\alpha^2}{2}\measure(dy)-\int_{[-h/2,0)}\left(e^{\alpha y}-V\alpha y-1\right)\measure(dy)\right\vert\\
&\leq& A_0\left(V\vert \alpha\vert^3h\xi(h/2)+(1-V)\vert \alpha\vert h\right).
\end{eqnarray*} 
\item\label{item:diffusion>0:4} $\left\vert \mu\left(\frac{e^{\alpha h}-e^{-\alpha h}}{2h}-\alpha\right)\right\vert\leq A_0h^2\vert \alpha\vert^3$ by \ref{lemma:technical:estimates:rough:II} (respectively $\left\vert \mu\left(\frac{e^{\alpha h}-1}{h}-\alpha\right)\right\vert\leq A_0h\vert \alpha\vert^2$ by \ref{lemma:technical:estimates:iii} of Lemma~\ref{lemma:technical:estimates}).
\item\label{item:diffusion>0:5} $\left\vert \left(-\mu^h\right)\left(\frac{e^{\alpha h}-e^{-\alpha h}}{2h}-\alpha\right)\right\vert\leq A_0Vh^2\kappa(h/2)\vert \alpha\vert^3$ (respectively $\left\vert \left(-\mu^h\right)\left(\frac{e^{\alpha h}-1}{h}-\alpha\right)\right\vert\leq  A_0Vh\kappa(h/2)\vert \alpha\vert^2$), by the same token, since in fact:
\begin{equation*}
-\mu^h =- \sum_{y\in\Zh^{--}}y\int_{A_y^h}\ind(z)\measure(dz)\leq -2V\int_{[-1,-h/2)}z\measure(dz)= 2V\kappa(h/2).
\end{equation*}
\item\label{item:diffusion>0:last} Finally: 
\begin{eqnarray*}
&\phantom{\leq}&\left\vert\sum_{y\in\Zh^{--}}c_y^h\left(e^{ \alpha y}-1\right)-\alpha \mu^h-\int_{(-\infty,-h/2]}\left(e^{\alpha z}-\alpha z\ind(z)-1\right)\measure(dz)\right\vert\\
&\leq& \left\vert\!\sum_{y\in \Zh^{--}}\int_{A^h_y\cap (-\infty,-V)}\left(e^{\alpha y}-e^{\alpha z}\right)\measure(dz)\right\vert+\left\vert\sum_{y\in \Zh^{--}}\!\int_{A^h_y\cap [-V,0)}\left(e^{\alpha y}-e^{\alpha z}-V\alpha (y-z) \right)\measure(dz)\right\vert\\
&\leq&A_0\left(\vert\alpha \vert h+V\vert \alpha\vert^2h\kappa(h/2)\right),
\end{eqnarray*}
by \ref{lemma:technical:estimates:ii} of Lemma~\ref{lemma:technical:estimates} (since $\vert e^{\alpha y}-e^{\alpha z}\vert\leq \vert 1-e^{\alpha (y-z)}\vert$) and \ref{lemma:technical:2ndorder:i} of Lemma~\ref{lemma:technical:2ndorder}.
\end{enumerate}
From the estimates \ref{item:diffusion>0:first}-\ref{item:diffusion>0:last}, \ref{delta1} follows, since any compact subset of $\Crr$ is contained in the rectangle $[0,\rho_0]\times [-\pi/h_,\pi/h]$, for all $h\in (0,h_0)$, so long as $\rho_0$ is chosen large enough, and $h_0$ small enough. On the other hand \ref{delta2} follows by taking $h_0=h_\star\land 2$ and $\rho_0=\rho=\gamma$, so that $\alpha=\beta$.\qed

\begin{remark}
Pursuant to \ref{delta1} above and Remark~\ref{remark:convergencePhi}, we assume henceforth that $h_\star$ has already been chosen small enough, so that in addition $\gamma>\Phi^h(q)$ for all $h\in (0,h_\star)$
\end{remark}

\subsection{Estimating the absolute difference $\vert A^{h\prime}-A'\vert$ and growth of $A^{h\prime}$ at infinity}\label{subsection:_the_difference_of_derivatives}
We establish here the following two properties pertaining to the derivatives $A'$ and $A^{h\prime}$, $h\in (0,h_\star)$:

\begin{enumerate}[label=($\Delta^{\prime}_\arabic{*}$),ref=($\Delta^{\prime}_\arabic{*}$)]
\item\label{delta1D} For any finite $h_0\in  (0,h_\star]$, there exists an $A_0\in (0,\infty)$ such that for all $h\in [0,h_0)$ and then all $s\in (-\pi/h,\pi/h)$: $$\vert A^{h\prime}(s)\vert\leq A_0\vert \beta\vert^{\epsilon-1},$$ where $\epsilon=2$, if $\diffusion>0$; $\epsilon=1$, if $\diffusion=0$ and $\kappa(0)<\infty$; finally, if $\diffusion=0$ and $\kappa(0)=\infty$, then $\epsilon$ must satisfy \ref{assumption:salient:one} of Assumption~\ref{assumption:salient} from the Introduction. 
\item\label{delta2D} There is an $A_0\in (0,\infty)$, such that for all $h\in (0,h_\star\land 2)$ and then all $s\in [-\pi/h,\pi/h]$, the following holds:
\begin{enumerate}[(i)]
\item When $\diffusion>0$: $$\vert A^{h\prime}(s)-A'(s)\vert\leq A_0(h^2\vert \beta\vert^3+h\xi(h/2)\vert \beta\vert^2+(h+\zeta(h/2))\vert \beta\vert).$$ 
\item When $\diffusion=0$: $$\vert A'(s)-A^{h\prime}(s)\vert\leq A_0\left[h+\zeta(h/2)+\xi(h/2)\right]\vert \beta\vert.$$ If in addition $\kappa(0)<\infty$, then: $$\vert \psi^h-\psi\vert(\beta)\leq A_0h\vert \beta\vert.$$ 
\end{enumerate}
\end{enumerate}
\noindent \emph{Proof of \ref{delta1D} and \ref{delta2D}.}
Indeed, we have, using differentiation under the integral sign, for $s\in \mathbb{R}$: 
\begin{equation}\label{eq:FV:derivative_of_A}
A'(s)=i\diffusion \beta+i\drift+i\int_{(-\infty,0)}z\left(e^{\beta z}-\ind(z)\right)\measure(dz).
\end{equation}
Suppose now first that $\diffusion>0$. Then, for $h\in (0,h_\star)$ and $s\in [-\pi/h,\pi/h]$:
\footnotesize
\begin{equation*}
A^{h\prime}(s)=i\drift\frac{e^{\beta h}+e^{-\beta h}}{2}-i\drift^h\left(\frac{e^{\beta h}+e^{-\beta h}}{2}-1\right)+(\diffusion+c_0^h)i\frac{e^{\beta h}-e^{-\beta h}}{2h}+i\sum_{y\in \Zh^{--}}\int_{A_y^h}y\left(e^{\beta y}-\ind(z)\right)\measure(dz).
\end{equation*}
\normalsize
From these expressions it follows readily, using \ref{lemma:technical:3rdorder:i} of Lemma~\ref{lemma:fundamental:inequalities}, \ref{lemma:technical:estimates:i} and \ref{lemma:technical:estimates:iv} of Lemma~\ref{lemma:technical:estimates}, $h\vert \mu^h\vert\leq 2h\kappa(h/2)\leq 4\int_{[-1,0)}y^2\measure(dy)$  and $\vert y\vert \leq 2\vert z\vert$, $e^{\gamma y}\leq e^{\gamma(z+h/2)}$ for $z\in A_y^h$, $y\in \Zh^{--}$, that $A'$ and $A^{h\prime}$ are both bounded by an affine function of $\vert s\vert$ on $s\in (-\pi/h,\pi/h)$, uniformly in $h\in (0,h_0)$ for any finite $h_0\in (0,h_\star]$. 

On the other hand, when $\diffusion=0$, we have for $h\in (0,h_\star)$ and then $s\in [-\pi/h,\pi/h]$:
\begin{eqnarray}\label{eq:FV:derivative_of_Ah}
A^{h\prime}(s)&=&\frac{ic_0^h}{2h}\left(e^{\beta h}-e^{-\beta h}\right)+i(\mu-\mu^h)e^{\beta h}+i\sum_{y\in \Zh^{--}}c_y^hye^{\beta y}\\
&=&\frac{ic_0^h}{2h}\left(e^{\beta h}-e^{-\beta h}\right)+i\mu e^{\beta h}-\mu^h(e^{\beta h}-1)+i\sum_{y\in \Zh^{--}}\int_{A_y^h}y\left(e^{\beta y}-\ind(z)\right)\measure(dz).\nonumber
\end{eqnarray}
Now, if $\kappa(0)<\infty$, it follows readily from $c_0^h\leq h\kappa(0)$, $-\drift^h\leq 2\kappa(0)$ and $\int_{(-\infty,0)}\vert y\vert e^{\gamma y}\measure(dy)<\infty$ that $A'$ and $A^{h\prime}$ are bounded, uniformly in $h\in (0,h_0)$ for any finite $h_0\in (0,h_\star]$. If, however, $\kappa(0)=\infty$ and then under Assumption~\ref{assumption:salient}, the desired conclusion of \ref{delta1D} follows from the estimates of Lemma~\ref{lemma:technical:estimates} and Proposition~\ref{proposition:generally_on_measure_asymptotics_bis} using \ref{assumption:salient:one} of Assumption~\ref{assumption:salient}. 

Finally, from the above expressions for the derivatives $A^\prime$ and $A^{h\prime}$, \ref{delta2D} follows using Lemma~\ref{lemma:technical:estimates} and a decomposition similar to the one in Subsection~\ref{subsection:the_difference}, which allowed to establish \ref{delta2}. For example, when $\diffusion=0$, we have the following decomposition of $A^{h\prime}(s)-A'(s)$ into three summands, each of which is then easily estimated ($h\in (0,h_\star\land 2)$):
\begin{enumerate}[(1)]
\item $\sum_{y\in \Zh^{--}}\int_{A_y^h}\left[y(e^{\beta y}-\indd(z))-z(e^{\beta z}-\indd(z))\right]\measure(dz)$;
\item $i(\mu-\mu^h)(e^{\beta h}-1)$;
\item $ic_0^h\left[\frac{e^{\beta h}-e^{-\beta h}}{2h}\right]-i\int_{[-h/2,0)}y(e^{\beta y}-1)\measure(dy)$. 
\end{enumerate}
\qed

\begin{remark}\label{remark:A'forFV}
Note that if $\diffusion=0$, $\kappa(0)<\infty$, then also $A'(s)=i\drift_0+i\int_{(-\infty,0)}ye^{\beta y}\measure(dy)$, $s\in \mathbb{R}$.
\end{remark}

\subsection{Coercivity of $\vert\psi^h-q\vert$}\label{subsection:coercivity}
In this subsection we establish the following coercivity property:
\begin{enumerate}[label=(C),ref=(C)]
\item\label{(C)} There exists an $h_0\in (0,h_\star]$ and a $B_0\in (0,\infty)$, such that for all $h\in [0,h_0)$ and then all $s\in (-\pi/h,\pi/h)$, the following holds (recall $\psi^0=\psi$, $\beta=\gamma+is$): $$\vert \psi^h(\beta)-q\vert\geq B_0\vert \beta\vert^\epsilon,$$  where $\epsilon=2$, if $\diffusion>0$; $\epsilon=1$, if $\diffusion=0$ and $\kappa(0)<\infty$; finally, if $\diffusion=0$ and $\kappa(0)=\infty$, then $\epsilon$ must satisfy \ref{assumption:salient:two} of Assumption~\ref{assumption:salient} from the Introduction.
\end{enumerate}
\noindent \emph{Proof of \ref{(C)}.}
(In the argument which follows, once again we refer the reader to expressions \eqref{eq:laplace_exponent}, \eqref{eq:laplace_exponent:scheme1} and \eqref{eq:laplace_exponent:scheme2}.) 

Suppose first $\diffusion>0$, so that we work under scheme 1. Consider $\psi(\beta)$. The diffusion term is certainly quadratic in $s$. The drift term (viewed as a function of $s$) is bounded by an affine function of $\vert s\vert$, and the L\'evy measure integral has subquadratic growth in $s$, as can be seen immediately by the DCT and Lemma~\ref{lemma:technical:3rdorder}: 
\begin{equation}\label{equation:subquadratic}
\lim_{R\to\infty}\sup_{\alpha\in \Crr,\vert \alpha\vert\geq R}\frac{1}{\vert \alpha\vert^2}\left\vert\int_{(-\infty,0)}\left(e^{\alpha y}-\alpha y\ind(y)-1\right)\measure(dy)\right\vert=0.
\end{equation} 
In addition $(s\mapsto (\psi-q)(\beta))$ is bounded away from zero on bounded subsets of $\RR$, by continuity and Proposition~\ref{proposition:well-posedness}. This establishes the claim for $h=0$. 

To establish coercivity for $\psi^h(\beta)-q$, $h>0$, we proceed as follows. First, by \ref{lemma:technical:estimates:i} of Lemma~\ref{lemma:technical:estimates}, for any finite $h_0\in (0,h_\star]$, there exists a $B_0\in (0,\infty)$, such that for all $h\in (0,h_0)$ and then all $s\in (-\pi/h,\pi/h)$: $$\left\vert\frac{1}{2h}(e^{\beta h}-e^{-\beta h})\right\vert\leq B_0\vert \beta\vert.$$ This controls the term involving $\mu$. Next, by \ref{lemma:technical:estimates:rough:II} of Lemma~\ref{lemma:technical:estimates}, again for any finite $h_0\in (0,h_\star]$, there are $\{A_1,A_2\}\subset (0,\infty)$, such that for all $h\in (0,h_0)$ and then all $s\in (-\pi/h,\pi/h)$: 
\begin{equation*}
\left\vert - \mu^h\left(\frac{e^{\beta h}-e^{-\beta h}}{2h}-\beta\right)\right\vert\leq A_1h^2\vert \beta\vert^3\vert \mu^h\vert\leq A_2 \vert \beta\vert^2\zeta(h/2)
\end{equation*}
with $\zeta(h/2)\to 0$ as $h\downarrow 0$. Further, just as in \eqref{equation:subquadratic}:

\begin{equation*}
\lim_{R\to\infty}\sup_{\alpha\in \Crr,\vert \alpha\vert\geq R}\sup_{h>0}\frac{1}{\vert \alpha\vert^2}\left\vert\sum_{y\in\Zh^{--}}\int_{A^h_y}\left(e^{\alpha y}-\alpha y\ind(z)-1\right)\measure(dz)\right\vert= 0,
\end{equation*} 
where, additionally, one should note that for $y\in\Zh^{--}$ and $z\in A_y^h$, $\vert y\vert\leq 2\vert z\vert$.

This, coupled with $\diffusion>0$ and \ref{lemma:technical:coercivity:a} of Lemma~\ref{lemma:technical:estimates}, implies that there exist $\{B_0,C_0\}\subset (0,\infty)$ and an $h_0\in (0,h_\star]$, such that for all $h\in (0,h_0)$ and then all $s\in (-\pi/h,\pi/h) \backslash (-C_0,C_0)$: $\vert \psi^h(\beta)-q\vert\geq B_0 s^2$. Finally, since, as $h\downarrow 0$, $\psi^h(\beta)-q\to\psi(\beta)-q$ uniformly in $s$ belonging to bounded sets, and $\psi(\beta)-q$ is bounded away from $0$ on such sets, we obtain the asserted result.

Now suppose $\diffusion=0$ (so that scheme 2 is in effect) and consider first the case when $\kappa(0)<\infty$. With regard to $\psi(\beta)$, note that $\mu_0\beta$ is linear in $s$, whereas:
\begin{equation}\label{eq:linear_coercivity}
\lim_{R\to\infty}\sup_{\alpha\in \Crr,\vert \alpha\vert\geq R}\frac{1}{\vert \alpha\vert}\left\vert\int_{(-\infty,0)}\left(e^{\alpha y}-1\right)\measure(dy)\right\vert=0,
\end{equation}
by \ref{lemma:technical:3rdorder:i} of Lemma~\ref{lemma:technical:3rdorder} and the DCT. The asserted coercivity follows immediately in the case $h=0$. 

To handle $h>0$, it will be observed first that $\mu-\mu^h\to \mu_0>0$ as $h\downarrow 0$, e.g. by the DCT. Also by the DCT, \ref{lemma:technical:3rdorder:i} of Lemma~\ref{lemma:technical:3rdorder}, and the fact that $\kappa(0)<\infty$: $$\lim_{R\to\infty}\sup_{\alpha\in \Crr,\vert \alpha\vert\geq R}\sup_{h>0}\frac{1}{\vert\alpha\vert}\left\vert \sum_{y\in \Zh^{--}} c_y^h(e^{\alpha y}-1)\right\vert=0.$$ Moreover, by \ref{lemma:technical:estimates:iv} of Lemma~\ref{lemma:technical:estimates}, for any finite $h_0\in (0,h_\star]$, there exists $A_0\in (0,\infty)$ such that for all $h\in (0,h_0)$ and then all $s\in (-\pi/h,\pi/h)$, $$\left \vert c_0^h\left(\frac{e^{\beta h}+e^{-\beta h}-2}{2h^2}\right)\right\vert\leq A_0\vert \beta\vert \int_{[-h/2,0)}\vert y\vert\measure(dy)$$ with $\int_{[-h/2,0)}\vert y\vert\measure(dy)\to 0$ as $h\downarrow 0$, since $\kappa(0)<\infty$. Coupled with \ref{lemma:technical:coercivity:b} of Lemma~\ref{lemma:technical:estimates}, the asserted coercivity follows. 

In the last instance, let $\diffusion=0$ and $\kappa(0)=\infty$. Necessarily, $V=1$ and Assumption~\ref{assumption:salient} is in effect. We control first $\Re\psi(\beta)$. Clearly $\gamma \mu$; $\int_{(-\infty,-1)}(e^{\gamma  y}\cos(sy)-1)\measure(dy)$ and $\int_{[-1,0)}(e^{\gamma y}-\gamma y-1)\measure(dy)$ are bounded in $s$, whereas (by \ref{lemma:technical:coercivity:xx} of Lemma~\ref{lemma:technical:estimates} and \ref{assumption:salient:two} of Assumption~\ref{assumption:salient}): $$\left\vert \int_{[-1,0)}e^{\gamma y}(\cos(sy)-1)\measure(dy)\right \vert\geq B_1s^2\int_{[-\pi/\vert s\vert,0)}y^2\measure(dy)\geq B_0\vert s\vert^{\epsilon},$$ for all $s\in\mathbb{R}$ with $\vert s \vert\geq K_0$, and some $\{K_0,B_0,B_1\}\subset (0,\infty)$. Coercivity for $\psi(\beta)-q$ follows. 

Now we turn our attention to $\psi^h(\beta)$ and again we control $\Re\psi^h(\beta)$. First observe that:
\begin{equation}
\underbrace{\frac{c_0^h}{2h^2}\left(e^{\gamma h}+e^{-\gamma h}-2\right)\cos(sh)}_{\text{bounded in }s}+\underbrace{\frac{c_0^h}{h^2}\left(\cos(sh)-1\right)}_{=:\mytag{$(I)$}{single_star}\leq 0}.
\end{equation}
Next, with respect to the term involving the drift $\mu$, we refer to \ref{lemma:technical:estimates:ii} of Lemma~\ref{lemma:technical:estimates} to obtain a linear bound in $s$. On the other hand we have:
\begin{equation*}
\Re\left\{\left(\frac{e^{\beta h}-1}{h}-\beta\right)\left(-\mu^h\right)\right\}=\underbrace{\frac{e^{\gamma h}(\cos(sh)-1)}{h}(-\mu^h)}_{\leq 0}+\underbrace{\frac{e^{\gamma h}-1-\gamma h}{h}(-\mu^h)}_{\text{bounded in }s},
\end{equation*}
since $-\mu^h \leq 2\kappa(h/2)$ and $\zeta(h/2) \to 0$ as $h\downarrow 0$. Finally, we consider the term: 
\begin{equation}\label{eq:coercivity:a_final_term}
\sum_{y\in\Zh^{--}}c_y^h\left(e^{\gamma y}\cos(s y)-1\right)-\sum_{y\in\Zh^{--}}\gamma y\int_{A_y^h}\indd(z)\measure(dz).
\end{equation} Certainly the part of \eqref{eq:coercivity:a_final_term} corresponding to $\mathbbm{1}_{(-\infty,-1)}\cdot\measure$ is bounded in $s$. The part of \eqref{eq:coercivity:a_final_term} corresponding to $\mathbbm{1}_{[-1,0)}\cdot \measure$ is:
\begin{eqnarray}
\nonumber &\phantom{=}&\sum_{y\in\Zh^{--}}\int_{A_y^h\cap [-1,0)}\measure(dz)\left[e^{\gamma y}\cos(s y)-\gamma y-1\right]\\
&=&\underbrace{\sum_{y\in \Zh^{--}}\int_{A_y^h\cap [-1,0)}\measure(dz)\left[e^{\gamma y}-\gamma y-1\right]}_{\text{bounded in }s}+\underbrace{\sum_{y\in\Zh^{--}}\int_{A_y^h\cap [-1,0)}\measure(dz)e^{\gamma y}\left(\cos(sy)-1\right)}_{=:\mytag{$(II)$}{double_star}\leq 0}.
\end{eqnarray}
Combining \ref{single_star} and \ref{double_star}, we have, via \ref{lemma:technical:coercivity:xx} of Lemma~\ref{lemma:technical:estimates}, for some $\{A_0,\beta_0,K_0,\alpha\}\subset (0,\infty)$, $h_0\in (0,h_\star]$, for all $h\in (0,h_0)$, and then all $s\in [-\pi/h,\pi/h]\backslash [-K_0,K_0]$: 
\begin{equation*}
\vert \Re\psi^h(\beta)\vert\geq \beta_0s^2\left(c_0^h+\sum_{y\in\Zh^{--},-y\leq \pi/\vert s\vert}y^2c_y^h\right)-A_0\vert s\vert.
\end{equation*}
Also: 
\begin{equation*}
c_0^h+\sum_{y\in\Zh^{--},-y\leq \pi/\vert s\vert}y^2c_y^h\geq\frac{4}{9}\int_{\left(-\left(\left(\frac{\pi}{\vert s\vert}-\frac{h}{2}\right)\lor \frac{h}{2}\right),0\right)}u^2\measure(du)\geq\frac{4}{9}\int_{(-\frac{1}{2}\frac{\pi}{\vert s\vert},0)}u^2\measure(du),
\end{equation*}
since either $\pi/\vert s\vert\geq h$, in which case $(\pi/\vert s\vert)-(h/2)\geq \frac{1}{2}\frac{\pi}{\vert s\vert}$, or $\pi/\vert s\vert\leq h$, in which case $\frac{h}{2}\geq   \frac{1}{2}\frac{\pi}{\vert s\vert}$. 

Using now item \ref{assumption:salient:two} of Assumption~\ref{assumption:salient}, the required coercivity follows at once. \qed

\subsection{Brownian motion with drift ($\diffusion>0=\measure(\RR)$)}\label{subsection:scales:BMdrift}
The scale functions can be calculated explicitly here, by using the recursive relations of Proposition~\ref{proposition:calculating_scale_functions:basic}. Then the following two proposition follow readily (essentially by Taylor expansions; recall also the notation from \eqref{eq:diffW} and \eqref{eq:diffZ}):

\begin{proposition}[$\diffusion>0=\measure(\mathbb{R})$ ($\Wq$ convergence)]\label{proposition:convergence:BM+drift:Wq}
Suppose $\diffusion>0=\measure(\mathbb{R})$ and let $q\geq 0$. If $q\lor \vert \mu\vert=0$, then for all $h\in (0,h_ \star)$ and all $x\in \Zh^{++}$: $\diffW=0$. If, however, $q\lor \vert \mu\vert>0$, then:
\begin{enumerate}[(i)]
\item There exist $\{A_0,h_0\}\subset (0,\infty)$ such that for all $h\in (0,h_0)$ and then all $x\in \Zh^{++}$ with $xh^2\leq 1$: $$\left\vert\diffW\right\vert\leq A_0h^2(1+x)e^{\alpha_+ x}.$$
\item For any nested sequence $(h_n)_{n\geq 1}\downarrow 0$ and then any $x\in \cup_{n\geq 1} \mathbb{Z}_{h_n}^{++}$: $$\lim_{n\to\infty}\frac{\diffWn}{h_n^2}=\frac{q^2}{2(\drift^2+2\diffusion q)}\Wq(x)+\frac{x}{\sqrt{\drift^2+2\diffusion q}}\left(e^{\alpha_+ x}\theta_+-e^{\alpha_- x}\theta_-\right).$$ (In particular, when $q=0$, this limit is $-\frac{2}{3}\frac{\drift^2 x}{(\diffusion)^3}e^{-2 \mu x/\diffusion}$.)
\end{enumerate}
Here:
\begin{eqnarray*}
\alpha_\pm&:=&\frac{-\drift\pm\sqrt{\drift^2+2 q\diffusion}}{\diffusion}\\
\theta_\pm&:=&\frac{\drift^3\sqrt{2q\diffusion+ \drift^2}\pm(\frac{1}{2}q^2(\diffusion)^2-\drift^4-\drift^2\diffusion q)}{3(\diffusion)^3\sqrt{2q\diffusion+ \drift^2}}.
\end{eqnarray*}
\end{proposition}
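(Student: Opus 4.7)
Both $\Wq$ and $\Wq_h$ are available in closed form in this Brownian-motion-with-drift case, so the natural strategy is a direct asymptotic comparison. For $X$, $\psi(\beta)-q=(\diffusion/2)(\beta-\alpha_+)(\beta-\alpha_-)$ with $\alpha_\pm$ as in the statement, and partial-fraction inversion of $\widehat{\Wq}=1/(\psi-q)$ gives $\Wq(x)=(e^{\alpha_+ x}-e^{\alpha_- x})/\sqrt{\drift^2+2q\diffusion}$ when $q\lor|\drift|>0$ (and $\Wq(x)=2x/\diffusion$ in the degenerate case). Under scheme~1 with $\measure=0$, the chain $X^h$ is a simple continuous-time random walk on $\Zh$ with up/down jumps of size $h$ at rates $\diffusion/(2h^2)\pm\drift/(2h)$, so recursion~\eqref{equation:recursion:Wq_basic} becomes the second-order linear recurrence
\[
p^h W_{n+1}-(1+qh^2/\diffusion)W_n+q^h W_{n-1}=0,\qquad p^h=(1+\drift h/\diffusion)/2,\ q^h=1-p^h,
\]
with characteristic roots
\[
\lambda_\pm(h)=\frac{\diffusion+qh^2\pm h\sqrt{\drift^2+2q\diffusion+q^2h^2}}{\diffusion+\drift h}.
\]
The boundary values $\Wq_h(0)=2h/(\diffusion+\drift h)$ and $p^h\Wq_h(h)=(1+qh^2/\diffusion)\Wq_h(0)$ then pin down $\Wq_h(nh)=A(h)\lambda_+^n+B(h)\lambda_-^n$ with $A(h),B(h)$ given explicitly by a $2\times 2$ linear system.

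\textbf{Trivial case.} If $q=|\drift|=0$ then $p^h=q^h=1/2$ and the recurrence $W_{n+1}-2W_n+W_{n-1}=0$ has solution $W_n=2(n+1)h/\diffusion$; comparing with $\Wq(x)=2x/\diffusion$ shows $\Wq_h(x-h)=\Wq(x)$, i.e.\ $\diffW\equiv 0$ for every $h\in(0,h_\star)$ and every $x\in\Zh^{++}$.

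\textbf{Non-trivial case.} Assume $q\lor|\drift|>0$. Taylor-expanding $\sinh(\beta h)/h$ and $(\cosh(\beta h)-1)/h^2$ yields $\psi^h(\beta)-\psi(\beta)=h^2(\drift\beta^3/6+\diffusion\beta^4/24)+O(h^4)$, so the implicit equation $\psi^h(\beta_\pm(h))=q$ combined with $\psi(\alpha_\pm)=q$ and $\psi'(\alpha_\pm)=\pm\sqrt{\drift^2+2q\diffusion}$ gives $\lambda_\pm(h)=e^{\alpha_\pm h+\rho_\pm(h)}$ with $\rho_\pm(h)=O(h^3)$ and an explicit leading coefficient. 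Likewise $A(h)=A_0+a_1 h+a_2 h^2+O(h^3)$ and $B(h)=B_0+b_1 h+b_2 h^2+O(h^3)$, with $A_0=-B_0=1/\sqrt{\drift^2+2q\diffusion}$ and the next-order coefficients determined by the two boundary values. Plugging these expansions into
\[
\Wq_h(x-h)=A(h)\lambda_+^{x/h-1}+B(h)\lambda_-^{x/h-1}=A(h)e^{(x-h)\alpha_+}e^{(x-h)\rho_+(h)/h}+B(h)e^{(x-h)\alpha_-}e^{(x-h)\rho_-(h)/h},
\]
and subtracting $\Wq(x)$, collects the difference in the form $\diffW=h^2 R(x,h)$. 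For (i), the constraint $xh^2\le 1$ keeps $|(x-h)\rho_\pm(h)/h|=O(xh^2)$ bounded, so the second-order Taylor expansion of the inner exponentials is uniformly controlled; since $\alpha_+\ge\alpha_-$, the resulting bound is $|\diffW|\le A_0 h^2(1+x)e^{\alpha_+ x}$. For (ii), fixing $x\in\cup_n\mathbb{Z}_{h_n}^{++}$ along a nested sequence $h_n\downarrow 0$ sends all higher-order remainders to zero and leaves precisely the claimed closed form for $\lim_n\diffWn/h_n^2$.

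\textbf{Main obstacle.} The argument is conceptually transparent but demands careful bookkeeping of four distinct second-order contributions to $\diffW/h^2$: the shift $x\mapsto x-h$, the $h$- and $h^2$-corrections to $A(h)$ and $B(h)$, the perturbations $\rho_\pm(h)$ of the spectral roots, and the cross-terms among these. Reproducing the exact coefficient $q^2/(2(\drift^2+2q\diffusion))$ of $\Wq(x)$ and the precise $\theta_\pm$ in the $x$-linear term requires tracking cancellations across all four sources (using repeatedly $\diffusion\alpha_\pm^2/2+\drift\alpha_\pm=q$); this algebra is the bulk of the labour.
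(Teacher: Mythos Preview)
Your approach is correct and is precisely what the paper does. The paper's own proof is a single sentence: it observes that the scale functions can be computed explicitly from the recursion of Proposition~\ref{proposition:calculating_scale_functions:basic} and that the result then follows ``essentially by Taylor expansions''. You have supplied the details the paper omits --- the explicit characteristic roots $\lambda_\pm(h)$ of the second-order recurrence, their relation to the zeros of $\psi^h-q$, and the bookkeeping of the $O(h^2)$ contributions --- and your handling of the degenerate case $q=\vert\mu\vert=0$ and of the uniformity under $xh^2\le 1$ is in line with what the paper intends.
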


\begin{remark}
We note that for all $x\geq 0$: $$\Wq(x)=\frac{1}{\sqrt{\drift^2+2\diffusion q}}\left(e^{\alpha_+ x}-e^{\alpha_-x}\right)$$ (when $q\land \vert \mu\vert>0$) and $\Wq(x)
=x$ (otherwise). Observe also that, unless $q=0$, $\alpha_\pm\in\pm (0,\infty)$. 
\end{remark}

\begin{proposition}[$\diffusion>0=\measure(\mathbb{R})$ ($\Zq$ convergence)]\label{proposition:convergence:BM+drift:Zq}
Suppose $\diffusion>0=\measure(\mathbb{R})$, let $q>0$. 
\begin{enumerate}[(i)]
\item There exist $\{A_0,h_0\}\subset (0,\infty)$ such that for all $h\in (0,h_0)$ and then all $x\in \Zh^{++}$ with $xh^2\leq 1$: $$\left\vert \diffZ\right\vert\leq A_0\left[h^2(1+x)e^{\alpha_+x}+h(e^{\alpha_+ x}-e^{\alpha_- x})\right].$$
\item For any nested sequence $(h_n)_{n\geq 1}\downarrow 0$ and then any $x\in \cup_{n\geq 1} \mathbb{Z}_{h_n}^{++}$: $$\lim_{n\to\infty}\frac{\diffZn}{h_n}=-\frac{1}{2}\frac{q}{\sqrt{\drift^2+2\diffusion q}}\left(e^{\alpha_+x}-e^{\alpha_-x}\right).$$
\end{enumerate}
\end{proposition}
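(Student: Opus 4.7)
The plan is to leverage the explicit closed forms available in this Brownian motion setting. Since $\measure = 0$, the chain $X^h$ is a birth-death chain with up- and down-jump rates $a := \diffusion/(2h^2) + \drift/(2h)$ and $b := \diffusion/(2h^2) - \drift/(2h)$ (both positive for $h$ small), and the recursion of Proposition~\ref{proposition:calculating_scale_functions:basic} collapses to the three-term recurrence $a\Wq_h((k+1)h) = (\gamma+q)\Wq_h(kh) - b\Wq_h((k-1)h)$ with $\gamma := a+b$. Solving with boundary values $\Wq_h(-h) = 0$ and $\Wq_h(0) = 1/(ha)$ yields $\Wq_h(kh) = (\lambda_+^{k+1} - \lambda_-^{k+1})/(hD_h)$, where $\lambda_\pm$ are the roots of $a\lambda^2 - (\gamma+q)\lambda + b = 0$ and $D_h := \sqrt{(\gamma+q)^2 - 4ab}$. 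A geometric summation then gives
\[
\Zq_h(x) - 1 = qh\sum_{k=0}^{x/h-1} \Wq_h(kh) = \frac{q}{D_h}\left[\lambda_+\frac{\lambda_+^{x/h}-1}{\lambda_+-1} - \lambda_-\frac{\lambda_-^{x/h}-1}{\lambda_--1}\right],
\]
to be compared with the explicit $\Zq(x) = 1 + (q/\Delta)[(e^{\alpha_+ x}-1)/\alpha_+ - (e^{\alpha_- x}-1)/\alpha_-]$, where $\Delta := \sqrt{\drift^2+2\diffusion q}$.

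The next step is to Taylor expand the ingredients in $h$. A direct algebraic computation, exploiting $\psi(\alpha_\pm) = q$ (i.e.\ $\tfrac12\diffusion\alpha_\pm^2 + \drift\alpha_\pm = q$) to simplify the $h^2$-coefficient, yields
\[
\lambda_\pm = 1 + h\alpha_\pm + \tfrac12 h^2\alpha_\pm^2 + O(h^3), \qquad hD_h = \Delta + O(h^2).
\]
The first expansion agrees with $e^{h\alpha_\pm}$ up to $O(h^3)$, so that $\log\lambda_\pm = h\alpha_\pm + O(h^3)$ and therefore $\lambda_\pm^{x/h} = e^{\alpha_\pm x}(1 + O(xh^2))$ uniformly on $\{xh^2 \leq 1\}$. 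One also extracts
\[
\frac{\lambda_\pm}{\lambda_\pm - 1} = \frac{1}{h\alpha_\pm} + \frac{1}{2} + O(h);
\]
the constant $\tfrac12$ here is precisely the source of the $\tfrac12$ in the limit formula~(ii).

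Substituting these expansions into the displayed closed form for $\Zq_h(x) - 1$ and multiplying by $q/D_h = h/\Delta + O(h^3)$, the $1/(h\alpha_\pm)$ pieces reconstruct $\Zq(x) - 1$ at leading order, the $\tfrac12$ pieces contribute $\tfrac{qh}{2\Delta}(e^{\alpha_+ x} - e^{\alpha_- x})$, and the remaining cross terms (including the $O(xh^2)$ error in $\lambda_\pm^{x/h}$) fit into an $O((1+x)h^2 e^{\alpha_+ x})$ remainder. This produces $\diffZ = -\tfrac{qh}{2\Delta}(e^{\alpha_+ x} - e^{\alpha_- x}) + O((1+x)h^2 e^{\alpha_+ x})$ uniformly on $\{xh^2 \leq 1\}$, delivering both (i) and (ii) at once.

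The main obstacle will be the bookkeeping: on the region $xh^2 \leq 1$ the error $\lambda_\pm^{x/h} - e^{\alpha_\pm x}$ is merely bounded, not small, so one must verify that its propagation through the prefactor $q/(D_h(\lambda_\pm-1)) = O(1)$ still produces only $O((1+x)h^2 e^{\alpha_+ x})$. This requires no new ideas beyond the care already exercised for the $\Wq$ counterpart in Proposition~\ref{proposition:convergence:BM+drift:Wq}.
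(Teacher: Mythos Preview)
Your proposal is correct and follows exactly the approach the paper indicates: explicit solution of the birth-death recursion of Proposition~\ref{proposition:calculating_scale_functions:basic} followed by Taylor expansion of the ingredients $\lambda_\pm$, $D_h$ and $\lambda_\pm/(\lambda_\pm-1)$ in $h$. The paper itself does not write out these details, stating only that the two propositions ``follow readily (essentially by Taylor expansions)''; your computation fills this in accurately, including the identification of the constant $\tfrac12$ in $\lambda_\pm/(\lambda_\pm-1)=\tfrac{1}{h\alpha_\pm}+\tfrac12+O(h)$ as the source of the coefficient in~(ii).
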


\subsection{Non-trivial diffusion component}
We consider the convergence when $\diffusion>0$. This case is relatively straightforward, as coercivity is very strong (namely, quadratic). Note that $\delta^0=1$ and we work under scheme 1.

\begin{proposition}[$\diffusion>0$ ($\Wq$ convergence)] \label{proposition:Wqconvergence:non-trivial_diffusion}
Suppose $\diffusion>0$ and let $q\geq 0$. 

\begin{enumerate}[(i)]
\item\label{Wqs:diffusion_positibe:i} For any $\gamma>\Phi(q)$, there are $\{A_0,h_0\}\subset (0,\infty)$ such that for all $h\in (0,h_0)$ and then all $x\in \Zh^{++}$: $$\left\vert\diffW\right\vert\leq A_0 \left(h+\zeta(h/2)+\xi(h/2)h\log(1/h)\right) e^{\gamma x}.$$ In particular, if $\kappa(0)<\infty$, then $\left\vert\diffW\right\vert\leq A_0 he^{\gamma x}$ and under Assumption~\ref{assumption:salient}, $\left\vert\diffW\right\vert\leq A_0 h^{2-\epsilon}e^{\gamma x}$. 
\item \label{Wqs:diffusion_positibe:ii} There exist:
\begin{enumerate}[(a)]
\item\label{Wqs:diffusion_postive:ii:a} a L\'evy triplet $(\diffusion,\measure,\drift)$ with $\diffusion\ne 0$ and $0<\kappa(0)<\infty$;
\item\label{Wqs:diffusion_postive:ii:c}  for each $\epsilon\in (1,2)$ a L\'evy triplet $(\diffusion,\measure,\drift)$ with $\diffusion\ne 0$ and $\lambda(-1,-\delta)\sim 1/\delta^\epsilon$ as $\delta\downarrow 0$;
\end{enumerate}
and then in each of the cases \ref{Wqs:diffusion_postive:ii:a}-\ref{Wqs:diffusion_postive:ii:c} a nested sequence $(h_n)_{n\geq 1}\downarrow 0$ such that for each $q\geq 0$ there is an $x\in \cup_{n\geq 1}\mathbb{Z}_{h_n}^{++}$ with: $$\liminf_{n\to\infty}\frac{\left\vert\diffWn\right\vert}{ h_n\lor \zeta(h_n)}>0,$$ where $h_n\lor \zeta(h_n)\sim h_n$, if $\kappa(0)<\infty$ and $\sim h_n^{2-\epsilon}$, if $\kappa(0)=\infty$ as $n\to \infty$. 
\end{enumerate}
\end{proposition}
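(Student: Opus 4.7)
\textbf{Proof plan for Proposition~\ref{proposition:Wqconvergence:non-trivial_diffusion}.} The plan is to run the general three-step program of Paragraph~\ref{subsubsection:method_for_obtaining_the_rates_of_convergence} on the integral representation \eqref{eq:difference:Wqs}. Since $\diffusion>0$ forces $X$ to have sample paths of infinite variation, $\delta^0=1$ and term~\ref{W:c} vanishes identically; it remains to control terms~\ref{W:a} and~\ref{W:b}. The coercivity estimate \ref{(C)} with $\epsilon=2$ applies because $\diffusion>0$, so both $|\psi(\beta)-q|$ and $|\psi^h(\beta)-q|$ are bounded below by $B_0|\beta|^2$ on the relevant $s$-range (uniformly in $h\in[0,h_0)$), and no integration by parts will be needed — a direct $\left|\int\cdot\,ds\right|\le\int|\cdot|\,ds$ argument should suffice, which in turn yields the bound uniformly in $x\in\Zh^{++}$ after factoring out $e^{\gamma x}$.

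First I would handle term~\ref{W:a}: using $|\psi(\beta)-q|\ge B_0|\beta|^2\ge B_0 s^2$ (and continuity of the integrand to rule out any boundary contribution at $|s|=\pi/h$ from the Cauchy principal value), the tail integral is dominated by $2\int_{\pi/h}^\infty ds/(B_0 s^2)=O(h)$. Next, for term~\ref{W:b}, I would apply \ref{delta2}(i) to the numerator and the coercivity bound twice to the denominator, obtaining
\[
\left|\frac{(\psi^h-\psi)(\beta)}{(\psi-q)(\psi^h-q)(\beta)}\right|\le\frac{A_0}{B_0^2}\Bigl[h^2+\frac{h\xi(h/2)}{|\beta|}+\frac{h}{|\beta|^3}+\frac{V\zeta(h/2)}{|\beta|^2}\Bigr].
\]
Integrating over $s\in[-\pi/h,\pi/h]$, the four contributions give $O(h)$, $O(h\xi(h/2)\log(1/h))$ (the log arising from $\int_{-\pi/h}^{\pi/h}ds/|\beta|$ with $|\beta|\ge\gamma\vee|s|$), $O(h)$ and $O(\zeta(h/2))$ respectively. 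Combining with the $O(h)$ bound from term~\ref{W:a} yields the advertised inequality in \ref{Wqs:diffusion_positibe:i}. The two special cases follow: when $\kappa(0)<\infty$ we have $\xi(h/2)\le(h/2)\kappa(0)$ and $\zeta(h/2)\le(h/2)\kappa(0)$, making every term $O(h)$; under Assumption~\ref{assumption:salient}, Proposition~\ref{proposition:generally_on_measure_asymptotics_bis} (with the identification $\alpha=\epsilon-1$) gives $\xi(h/2)=O(h^{2-\epsilon})$ and $\zeta(h/2)=O(h^{2-\epsilon})$, so the $h\xi(h/2)\log(1/h)$ term is $O(h^{3-\epsilon}\log(1/h))=o(h^{2-\epsilon})$ and the total is $O(h^{2-\epsilon})$.

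For the sharpness statement~\ref{Wqs:diffusion_positibe:ii}, I would exhibit concrete triplets in each regime and show, via reduction by domination, that one identifiable piece of \eqref{eq:difference:Wqs} already contributes on the claimed order. For \ref{Wqs:diffusion_postive:ii:a} ($0<\kappa(0)<\infty$) a simple choice like $\diffusion=1$, $\measure=c\,\delta_{-1}$, $\mu$ arbitrary gives a finite compound Poisson perturbation of Brownian motion; the scale function has a computable form and the linear-order error in the discretised drift/diffusion terms in~\ref{item:diffusion>0:first} and~\ref{item:diffusion>0:5} survives integration to produce a lower bound of order $h$ along a suitable nested sequence $(h_n)$. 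For \ref{Wqs:diffusion_postive:ii:c} I would use a one-sided stable-like L\'evy measure with $\lambda(-1,-\delta)\sim\delta^{-\epsilon}$ (truncated at $-1$); here Proposition~\ref{proposition:generally_on_measure_asymptotics_bis} guarantees $\xi(h/2)\sim h^{2-\epsilon}$ and $\zeta(h/2)\sim h^{2-\epsilon}$, and the key point is to show that the $V\zeta(h/2)|\beta|^2$-summand in the decomposition leading to \ref{delta2}(i) contributes a term of exact order $h^{2-\epsilon}$ that cannot be cancelled by the other (strictly smaller order) pieces.

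The main obstacle will be the sharpness in the infinite-activity subcase~\ref{Wqs:diffusion_postive:ii:c}: the error \eqref{eq:difference:Wqs} is a sum of oscillatory integrals of comparable magnitudes, and isolating a non-cancelling leading term of exact order $h^{2-\epsilon}$ requires a careful asymptotic analysis — typically by specializing to $q=0$, reducing (via reduction by domination) to an explicit principal integral involving $\int_{[-h/2,0)}y^2\measure(dy)$, and verifying that its integrated contribution against $e^{isx}/(\psi-q)^2$ stays bounded away from zero along a well-chosen nested sequence $(h_n)_{n\ge 1}$ and at a well-chosen $x$. The upper-bound part (i), by contrast, is largely a bookkeeping exercise given the ingredients \ref{delta2} and \ref{(C)} already established.
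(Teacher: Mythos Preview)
Your treatment of part~\ref{Wqs:diffusion_positibe:i} is correct and is exactly the paper's argument: term~\ref{W:c} vanishes because $\delta^0=1$; term~\ref{W:a} is $O(h)$ by coercivity \ref{(C)} with $\epsilon=2$; and term~\ref{W:b} is handled by combining \ref{delta2}(i) with coercivity in the denominator, the four contributions integrating to $O(h)+O(h\xi(h/2)\log(1/h))+O(h)+O(\zeta(h/2))$. The two corollaries are also right.

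The gap is in the sharpness part~\ref{Wqs:diffusion_positibe:ii}. For~\ref{Wqs:diffusion_postive:ii:a} you attribute the surviving linear-order error to items~\ref{item:diffusion>0:first} and~\ref{item:diffusion>0:5} of the decomposition in Subsection~\ref{subsection:the_difference}. But item~\ref{item:diffusion>0:first} contributes $h^2|\beta|^4$, which after division by $|\beta|^4$ integrates to $O(h^2)$, not $O(h)$; and with a finite L\'evy measure one has $V=0$, hence $\mu^h=0$, so item~\ref{item:diffusion>0:5} vanishes identically. The genuine linear contribution comes from item~\ref{item:diffusion>0:last}: the discretisation of the atom itself. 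The paper takes $\measure=\delta_{-1/2}$ with $h_n=1/3^n$ (so that $-1/2$ is \emph{never} a lattice point) and isolates the term $e^{-\beta/2}(1-e^{\beta h_n/2})\approx -(\beta h_n/2)e^{-\beta/2}$; everything else is pushed to $O(h_n^2)$ by reduction by domination, and the remaining integral is shown (via DCT and a Laplace-transform argument) to converge to a function of $x$ that cannot vanish identically. Your choice $\measure=c\,\delta_{-1}$ is dangerous: along any nested sequence with $-1$ a grid point the L\'evy-measure discretisation error is exactly zero and the total error drops to $O(h^2)$, so the example would fail outright.

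For~\ref{Wqs:diffusion_postive:ii:c} you point to the integral $\int_{[-h/2,0)}y^2\measure(dy)=c_0^h$ as the source of the $h^{2-\epsilon}$ term. In the paper's construction (a discrete measure $\measure=\sum_k w_k\delta_{-x_k}$ with $x_k=\tfrac32 h_k$, $w_k=x_k^{-\epsilon}$, $h_n=1/3^n$) the leading piece is not $c_0^h$ but the quadratic mismatch $\sum_{y}\int_{A_y^h\cap[-1,0)}\beta^2(y^2-z^2)\measure(dz)$ arising in item~\ref{item:diffusion>0:last}; one computes $\sigma_1-\sigma_2=2\zeta(h_n/2)-\gamma(h_n/2)\ge\zeta(h_n/2)$ and then shows the accompanying oscillatory integral converges to a nonzero limit. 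The point is that for sharpness the relevant quantity is an \emph{actual} term in $(\psi^h-\psi)(\beta)$, not a summand in the upper bound of \ref{delta2}, and one must verify the limiting integral does not vanish for some $x$ --- this last step is what your plan omits.
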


\begin{remark}\label{remark:diffusion_positive:asymptotic}
Note that if $\lambda(-1,\delta)\sim 1/\delta^\epsilon$ as $\delta\downarrow 0$, with $\epsilon\in (1,2)$, then (as $h\downarrow 0$) $\xi(h/2)\sim h^{2-\epsilon}$, and $\zeta(h/2)\sim h^{2-\epsilon}$, so that $h\xi(h/2)\log(1/h)=o(h\kappa(h/2))$. See Proposition~\ref{proposition:generally_on_measure_asymptotics}. More generally, Assumption~\ref{assumption:salient} is fulfilled if $\measure(-1,-\delta)\sim \delta^{-\epsilon}l(\delta)$ where $0<\liminf_{0+}l<\limsup_{0+}l<+\infty$ (see Lemma~\ref{lemma:fubini}). 
\end{remark}

\begin{remark}\label{remark:under_assumption_salient_etc}
Under Assumption~\ref{assumption:salient}, it follows that $\zeta(h/2)+\xi(h/2)=O(h^{2-\epsilon})$ as $h\downarrow 0$ (see again Lemma~\ref{lemma:fubini}).
\end{remark}

\begin{proof}
First, with respect to~\ref{Wqs:diffusion_positibe:i}, we have as follows. \ref{W:a} of \eqref{eq:difference:Wqs} is seen immediately to be of order $O(h)$ by coercivity~\ref{(C)}; whereas~\ref{W:b} of \eqref{eq:difference:Wqs} is of order $O(h+h\xi(h/2)\log(1/h)+V\zeta(h/2))$ by coercivity~\ref{(C)} and the estimate of the absolute difference $\vert \psi^h-\psi\vert$~\ref{delta2}. Since $\delta^0=1$, \ref{W:c} of \eqref{eq:difference:Wqs} is void. 

Second we prove~\ref{Wqs:diffusion_positibe:ii}.
\begin{itemize}
\item We consider first \ref{Wqs:diffusion_postive:ii:a}. Take $\measure=\delta_{-1/2}$, $h_n=1/3^n$ ($n\geq 1$), $\drift=0$, $\diffusion=1$, $x\in \cup_{n \geq 1}\mathbb{Z}_{h_n}^{++}$ ($x$ is now fixed!). The goal is to establish no better than linear convergence in this case. 
\end{itemize}

Now,~\ref{W:a} of \eqref{eq:difference:Wqs} is actually of order $O(h^2)$. Indeed, in the difference to replacing $\psi(\beta)-q$ by $-\frac{1}{2}\sigma^2 s^2$, this is seen immediately to be even of order $O(h^3)$, by coercivity \ref{(C)} and a simple $\vert \int \cdot\vert\leq \int \vert\cdot\vert$ argument. On the other hand: $$\lim_{T\to\infty}\int_{(-T,T)\backslash (-\pi/h,\pi/h)}\frac{e^{isx}}{s^2}ds=O(h^2).$$ This is so by an integration by parts argument, writing: $$\frac{d}{ds}\left(\frac{e^{isx}}{s^2}\right)=\frac{ix e^{isx}}{s^2}-\frac{2e^{isx}}{s^3}.$$ We can thus focus on~\ref{W:b} of \eqref{eq:difference:Wqs}. Consider there the difference: 
\begin{equation}
(\psi^h-\psi)(\beta)=\overbrace{\frac{\diffusion}{2h^2}\left(e^{\beta h}+e^{-\beta h}-2\right)-\frac{1}{2}\diffusion \beta^2}^{\mytag{(b.1)}{diffusion:b.1}}+\overbrace{\int_{(-\infty,0)}(e^{\beta y}-1)\measure(dy)-\sum_{y\in \Zh^{--}}c_y^h(e^{\beta y}-1)}^{\mytag{(b.2)}{diffusion:b.2}}.
\end{equation} The part of \ref{W:b} in \eqref{eq:difference:Wqs} corresponding to \ref{diffusion:b.1} is, in the difference to the analogous term for Brownian motion without drift, bounded (up to a non-zero multiplicative constant) by: $$\int_{[-\pi/h,\pi/h]}ds \frac{h^2\vert \beta\vert^4 \vert \beta\vert^2}{\vert\beta\vert^8}=O(h^2)$$ (this follows by \ref{lemma:technical:estimates:iv} and \ref{lemma:technical:estimates:rough:I} of Lemma~\ref{lemma:technical:estimates}, the fact that $e^{\beta y}-1$ is uniformly bounded by $2$, and by coercivity \ref{(C)}). Since the term corresponding to just Brownian motion is shown to be of order $O(h^2)$ itself (see Proposition~\ref{proposition:convergence:BM+drift:Wq}), we can thus focus on \ref{diffusion:b.2}. The latter is $e^{-\beta/2}-e^{-(1-h_n)\beta/2}=e^{-\beta/2}(1-e^{\beta h_n/2})$. In the difference to replacing $1-e^{\beta h_n/2}$ by $-\beta h_n/2$, a term of order $O(h_n^2)$ emerges in $\diffWn$, this by \ref{lemma:technical:estimates:iii} of Lemma~\ref{lemma:technical:estimates}, and coercivity \ref{(C)}. Hence it is sufficient to study: $$\frac{1}{2\pi i}\int_{[-\pi/h_n,\pi/h_n]}e^{\beta x}\frac{e^{-\beta/2 }\beta}{(\psi-q)(\beta)(\psi^{h_n}-q)(\beta)}ds$$ which we would like bounded away from $0$, as $n\to\infty$. Now, by coercivity \ref{(C)}, and the DCT, this expression in fact converges to: $$\frac{1}{2\pi i}\int_{-\infty}^\infty \frac{e^{\beta (x-1/2)}\beta }{(\psi-q)^2(\beta)}ds=:g(x).$$ Note that $g$ is continuous in its parameter $x\in [0,\infty)$ by the DCT. Moreover, $g$ cannot vanish identically on $\cup_{n\geq 1}\mathbb{Z}_{h_n}^{++}$, since then it would do so on $\mathbb{R}_+$ by continuity. But this cannot be. Naively, since in $g$ we are looking at the inverse Laplace transform of a non-vanishing function $T$. Formally, one performs a Laplace transform of $g$, and concludes, via Fubini and Cauchy's Residue Theorem (recalling the quadratic behaviour of $\psi(\beta)$ as $\vert \beta\vert\to\infty$ over $\Crr$, see \eqref{equation:subquadratic}) that $T=\hat{g}$, where $T(\alpha):=\frac{e^{-\alpha /2}\alpha}{(\psi-q)^2(\alpha)}$ ($\alpha>\gamma$). Then $g$ vanishing implies the same of $T$, which is a clear contradiction.

\begin{itemize}
\item Consider now \ref{Wqs:diffusion_postive:ii:c}. We are seeking to establish strictly worse than linear convergence here, since $\kappa(0)=\infty$. 
\end{itemize}
For sure \ref{W:a} in \eqref{eq:difference:Wqs} is of order $O(h)$. When it comes to~\ref{W:b}, consider its decomposition, in the numerator of the integrand according to items \ref{item:diffusion>0:first}-\ref{item:diffusion>0:last} from Subsection~\ref{subsection:the_difference}. Now, \ref{item:diffusion>0:first} thus yields in~\ref{W:b} a term of order $O(h)$; \ref{item:diffusion>0:2} one of order $o(h)$; with respect to \ref{item:diffusion>0:3} we will choose a $\measure$ which falls under the scope of Remark~\ref{remark:diffusion_positive:asymptotic} and hence this will contribute a term of order $o(\zeta(h/2))$; \ref{item:diffusion>0:4} gives a term of order $o(h)$; \ref{item:diffusion>0:5} contributes as $o(h\kappa(h/2))$; whereas finally \ref{item:diffusion>0:last} yields a term of order $O(h)$ in the part corresponding to $\mathbbm{1}_{(-\infty,1)}\cdot\measure$ and the part corresponding to $\mathbbm{1}_{[-1,0)}\cdot\measure$ is where we will get sharpness of the rate from. 

So we take $\diffusion=1$, $\drift=0$, $h_n=1/3^n$ ($n\geq 1$), $\measure=\sum_{k=1}^\infty w_k\delta_{-x_k}$,  $x_k=\frac{3}{2}h_k$ and $w_k=1/x_k^\epsilon$ ($k\geq 1$). Clearly $\kappa(0)=\infty$ and by checking it on the decreasing sequence $(h_n)_{n\geq 1}$ it is clear that $\lambda(-1,-\delta)\sim \delta^{-\epsilon}$ as $\delta\downarrow 0$. Moreover, $$\left\vert\sum_{y\in \Zh^{--}}\!\int_{A^h_y\cap [-1,0)}\left(e^{\beta y}-\beta y-(e^{\beta z}-\beta z)-\frac{1}{2}\beta^2(y^2-z^2)\right)\measure(dz)\right\vert$$
yields in \eqref{eq:difference:Wqs}, by \ref{lemma:technical:2ndorder:ii} of Lemma~\ref{lemma:technical:2ndorder} and coercivity \ref{(C)}, a term of order $O(h\log(1/h))=o(h^{2-\epsilon})$. Therefore it is sufficient to study: $$\sum_{y\in \Zh^{--}}\!\int_{A^h_y\cap [-1,0)}\beta^2(y^2-z^2)\measure(dz)=\sigma_2-\sigma_1,$$ where:
\begin{equation*}
\sigma_1:=\int_{[-1,-h_n/2)}u^2\measure(du)=\sum_{k=1}^nx_k^2w_k,\quad \sigma_2:=\sum_{k=1}^n(x_k-h_n/2)^2w_k
\end{equation*}
and hence $\sigma_1-\sigma_2=2\zeta(h_n/2)-\gamma(h_n/2)\geq \zeta(h_n/2)$. Moreover, $$\int_{[-\pi/h_n,\pi/h_n]}e^{isx}\frac{\beta^2}{ [(\psi-q)(\psi^{h_n}-q)](\beta)]}ds\to\int_{\mathbb{R}}e^{isx}\frac{\beta^2}{(\psi-q)^2(\beta)}ds$$ as $n\to\infty$ by the DCT. By the usual arguments, this integral does not vanish simultaneously in all $x\in \cup_{n\geq 1}\mathbb{Z}_{h_n}^{++ }$, whence tightness obtains.
\end{proof}

\begin{proposition}[$\diffusion>0$ ($\Zq$ convergence)]\label{proposition:Zqconvergence:non-trivial_diffusion}
Suppose $\diffusion>0$, let $q>0$. 

\begin{enumerate}[(i)]
\item \label{diffusion>0:Zq:i} For any $\gamma>\Phi(q)$, there are $\{A_0,h_0\}\subset (0,\infty)$ such that for all $h\in (0,h_0)$ and then all $x\in \Zh^{++}$: $$\left\vert\diffZ\right\vert\leq A_0 \left(h+\zeta(h/2)\right) e^{\gamma x}.$$ In particular, if $\kappa(0)<\infty$, then $\left\vert\diffZ\right\vert\leq A_0 he^{\gamma x}$ and otherwise $\left\vert\diffZ\right\vert\leq A_0 \zeta(h/2)e^{\gamma x}$. 
\item \label{diffusion>0:Zq:ii}
\begin{enumerate}[(a)]
\item\label{Zqs:diffusion_postive:ii:a} There exists a nested sequence $(h_n)_{n\geq 1}\downarrow 0$, such that for any $q>0$ and any $x\in \cup_{n\geq 1}\mathbb{Z}_{h_n}^{++}$, there exists a L\'evy triplet $(\diffusion,\drift,\measure)$ with $\diffusion>0$ and $0<\kappa(0)<\infty$, and such that:
$$\liminf_{n\to\infty}\frac{\left\vert\diffZn\right\vert}{ h_n}>0.$$ 

\item\label{Zqs:diffusion_postive:ii:c}  There exists for each $\epsilon\in (1,2)$ a L\'evy triplet $(\diffusion,\measure,\drift)$ with $\diffusion>0$ and $\lambda(-1,\delta)\sim 1/\delta^\epsilon$ as $\delta\downarrow 0$, and a nested sequence $(h_n)_{n\geq 1}\downarrow 0$ such that for each $q>0$, there is an $x\in \cup_{n\geq 1}\mathbb{Z}_{h_n}^{++}$ with:
\end{enumerate}
$$\liminf_{n\to\infty}\frac{\left\vert\diffZn\right\vert}{\zeta(h_n)}>0.$$ 
\end{enumerate}
\end{proposition}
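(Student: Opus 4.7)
The proof will mirror that of Proposition~\ref{proposition:Wqconvergence:non-trivial_diffusion}, exploiting the integral representation~\eqref{eq:difference:Zqs} of $\Delta_Z^{(q)}$, the quadratic coercivity~\ref{(C)} available because $\diffusion>0$, and the estimate~\ref{delta2} of $\vert\psi^h-\psi\vert$. Compared with the $\Wq$ case, the $\Zq$ case is actually more tractable: the extra factor $1/\beta$ appearing in every term of~\eqref{eq:difference:Zqs} improves the integrability at infinity, so that no integration-by-parts argument will be required and a direct integral-triangle-inequality estimate will suffice for part~(i).

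For~(i), I estimate the three terms of~\eqref{eq:difference:Zqs} separately. Term~\ref{Z:a}: coercivity~\ref{(C)} gives an integrand bounded by $A/\vert\beta\vert^3$, whose tail integral over $\vert s\vert\geq \pi/h$ contributes $O(h^2)$. Term~\ref{Z:b}: a direct computation shows the identity $\frac{1}{\beta}\bigl(1-\frac{\beta h}{1-e^{-\beta h}}\bigr)=h\,f_{\gamma h}(sh)$, with $f$ as in Lemma~\ref{lemma:convergence:a_uniformly_bounded_family}; combined with $\vert\psi^h-q\vert\geq B_0\vert\beta\vert^2$ and integrability of $1/\vert\beta\vert^2$ on $\mathbb{R}$, this yields an $O(h)$ contribution. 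Term~\ref{Z:c}: applying coercivity twice in the denominator reduces the estimate to bounding $\int\vert\psi^h-\psi\vert/\vert\beta\vert^5\,ds$, into which I insert the decomposition from~\ref{delta2}; the four resulting summands contribute, respectively, $O(h^2\log(1/h))$, $O(h\xi(h/2))$, $O(h)$ and $O(V\zeta(h/2))$, and collecting these produces the announced bound $A_0(h+\zeta(h/2))e^{\gamma x}$ (the two ``in particular'' consequences following trivially, since $V\zeta(h/2)=O(h)$ when $\kappa(0)<\infty$ and $h=o(\zeta(h/2))$ when $\kappa(0)=\infty$).

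For~(ii), I will adapt the examples from Proposition~\ref{proposition:Wqconvergence:non-trivial_diffusion}~\ref{Wqs:diffusion_positibe:ii}. In case~(a), with $\diffusion=1$, $\drift=0$, $\measure=\delta_{-1/2}$ and $h_n=1/3^n$, the atom $-1/2$ lands on the boundary $y_n=-1/2+h_n/2$ of its cell, so the discretization error $\sum c^{h_n}_y(e^{\beta y}-1)-\int(e^{\beta y}-1)\measure(dy)$ equals $e^{-\beta/2}(e^{\beta h_n/2}-1)$; inserted into~\ref{Z:c}, its $O(h_n)$ leading Taylor term combined with the DCT produces an asymptotic of the form $\frac{qh_n}{4\pi}e^{\gamma x}\int_{\mathbb{R}}e^{is(x-1/2)}/(\psi-q)^2(\beta)\,ds+o(h_n)$, while the remaining summands are $O(h_n^2)$ by the same route as for $\Wq$. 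In case~(c), with $\diffusion=1$, $\drift=0$, $\measure=\sum_{k\geq 1}x_k^{-\epsilon}\delta_{-x_k}$, $x_k=(3/2)h_k$, $h_n=1/3^n$, the estimates from~\ref{lemma:technical:2ndorder:ii} of Lemma~\ref{lemma:technical:2ndorder} will absorb the nonlinear portion of the discretization error into $O(h\log(1/h))=o(\zeta(h_n))$, leaving a dominant contribution proportional to $\beta^2(\sigma_2-\sigma_1)$ with $\sigma_1-\sigma_2\geq\zeta(h_n/2)$; the DCT then produces the limiting integral $\int e^{isx}\beta^2/(\psi-q)^2\,ds$. In both subcases the principal obstacle, exactly as in Proposition~\ref{proposition:Wqconvergence:non-trivial_diffusion}, will consist in showing that the limiting integral cannot vanish identically on the nested grid $\cup_{n\geq 1}\mathbb{Z}_{h_n}^{++}$, which I will handle by a Fubini-plus-Cauchy residue argument applied to its (complex) Laplace transform.
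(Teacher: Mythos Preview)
Your treatment of part~(i) and of part~(ii)(c) matches the paper's: direct triangle-inequality estimates using quadratic coercivity~\ref{(C)}, Lemma~\ref{lemma:convergence:a_uniformly_bounded_family} for term~\ref{Z:b}, and~\ref{delta2} for term~\ref{Z:c}; for (ii)(c) the paper likewise recycles the $\Wq$ example, noting the extra $1/\beta$ in~\ref{Z:c} is harmless and that every other term is $O(h)=o(\zeta(h_n))$.

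There is, however, a genuine gap in your argument for (ii)(a). You write that ``the remaining summands are $O(h_n^2)$ by the same route as for $\Wq$'', but this is false for $\Zq$: term~\ref{Z:b} of~\eqref{eq:difference:Zqs} is exactly of order $h$, not $h^2$. Indeed the factor $\bigl(1-\tfrac{\beta h}{1-e^{-\beta h}}\bigr)$ is $\sim -\beta h/2$ for small $h$, and after dividing by $\beta(\psi^h-q)$ and integrating you obtain an $O(h)$ contribution whose limiting coefficient is precisely the nonzero quantity computed in Proposition~\ref{proposition:convergence:BM+drift:Zq} (for the BM part; the atom perturbs this further). This $O(h)$ term from~\ref{Z:b} competes with, and could in principle cancel, your $O(h)$ term from~\ref{Z:c}; your example $\measure=\delta_{-1/2}$ does not by itself rule out such cancellation.

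The paper sidesteps this difficulty by taking a different example: $\measure=\alpha\delta_{-1}$ with $\alpha>0$ a free parameter. The point is that Proposition~\ref{proposition:convergence:BM+drift:Zq} already gives an explicit nonzero $O(h)$ limit for pure Brownian motion, and one then checks that the difference between~\eqref{eq:difference:Zqs} for the full process and for its BM part is $o(h)+\alpha O(h)$ (uniformly in small $\alpha$, after fixing $\gamma>\Phi^{BM}(q)$ and verifying coercivity is uniform). Choosing $\alpha$ small enough relative to the known BM constant then preserves sharpness. This perturbative device is what you are missing; your direct-limit approach would require additionally computing the~\ref{Z:b} contribution explicitly and checking noncancellation with the~\ref{Z:c} contribution, which is more work and not what you have sketched.
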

\begin{proof}
With respect to~\ref{diffusion>0:Zq:i}, we have as follows. First, \ref{Z:a} of \eqref{eq:difference:Zqs} is $O(h^2)$ and \ref{Z:b} $O(h)$ by coercivity \ref{(C)} (and by Lemma~\ref{lemma:convergence:a_uniformly_bounded_family} in the case of~\ref{Z:b}). Second, \ref{Z:c} is $O(h+\zeta(h/2))$ by coercivity~\ref{(C)} and the estimate \ref{delta2}. 

Next we show~\ref{diffusion>0:Zq:ii}. 
\begin{itemize}
\item We consider first \ref{Zqs:diffusion_postive:ii:a}. Take $\diffusion=1$, $\drift=0$ and $\measure=\alpha \delta_{-1}$, where we fix $\alpha>0$. The idea is to note that convergence is `tightly linear' in the Brownian motion case (see Proposition~\ref{proposition:convergence:BM+drift:Zq}), and then to show that by taking $\alpha$ small enough, we do not spoil this.
\end{itemize}
Now, remark that:
\begin{itemize}
\item[---] As $\alpha\downarrow 0$, $\psi$ is nondecreasing, hence $\Phi(q)$ is nonincreasing, and so a $\gamma>\Phi(q)$ can be chosen, uniformly in all $\alpha$ bounded.
\item[---] Moreover, the presence of $\alpha$ does not affect coercivity, which is in addition uniform in all $\alpha$ small enough. Indeed, just take $\gamma>\Phi^{BM}(q)$, where $\Phi^{BM}$ corresponds to the Brownian motion part of this L\'evy process. Then $\vert \psi^h_{BM}(\beta)-q\vert\geq B_0\vert \beta\vert^2$, for all $s\in (-\pi/h,\pi/h)$, for all $h\in [0,h_0)$, for some $\{B_0,h_0\}\subset (0,\infty)$. The part of $\psi^h(\beta)$ corresponding to the CP term is bounded uniformly in $s$ and all $h$ small enough (including $0$), and moreover, scales with $\alpha$. Hence there are $\{B_0, h_0,\alpha_0\}\subset (0,\infty)$ such that for all $\alpha\in (0,\alpha_0)$: $\vert \psi^h(\beta)-q\vert\geq B_0\vert \beta\vert^2$, for all $s\in (-\pi/h,\pi/h)$, for all $h\in [0,h_0)$. 
\end{itemize}
Take now $h_n=1/2^n$ ($n\geq 1$), and a fixed $x\in \cup_{n\geq 1}\mathbb{Z}_{h_n}^{++}$. We show that \eqref{eq:difference:Zqs}, when looked in the difference to the analogous expression for the Brownian motion part, is of order $o(h)+\alpha O(h)$ (i.e. bounded in absolute value by terms either decaying faster than linear, or else with a coefficient that scales with $\alpha$). Indeed, the difference of~\ref{Z:b}s in \eqref{eq:difference:Zqs} follows readily as being $\alpha O(h)$. In addition,~\ref{Z:a} of \eqref{eq:difference:Zqs} is of order $O(h^2)$. Also~\ref{Z:c} is of order $o(h)$ except in the part corresponding to the CP term, which is itself $\alpha O(h)$. Thus, choosing $\alpha$ small enough, the desired sharpness obtains. However, at least in principle, the choice of $\alpha$ depends on $q$ and $x$, hence note the formulation of the proposition.
\begin{itemize}
\item Consider now \ref{Wqs:diffusion_postive:ii:c}. Clearly here the same example works as for the functions $\Wq$: the presence of the extra $1/\beta$ in the integrand of~\ref{Z:c} of \eqref{eq:difference:Zqs} is of no consequence (if anything, beneficial), and the rest of the terms are of order $O(h)$ anyhow.
\end{itemize}
\end{proof}

\subsection{$\diffusion=0$, finite variation paths}\label{subsection:scales:FV}
In this subsection we study the convergence when $\diffusion=0$ and $\kappa(0)<\infty$. Note that in this case necessarily $\mu_0>0$, whereas $\delta^0=0$, and we work under scheme 2. The estimates are delicate here, since coercivity is weak (namely, linear). 


We make the following key observation. By decomposing: 
\begin{equation}\label{eq:FVpaths_decomposition}
A(s)=\psi(\beta)-q=\overbrace{\mu_0\gamma+\int(e^{\gamma y}\cos(sy)-1)\measure(dy)-q}^{=:A_e(s)}+\overbrace{is\mu_0+i\int e^{\gamma y}\sin(sy)\measure(dy)}^{=:A_o(s)}
\end{equation} into its even and odd part, it can be shown that, crucially, 
\begin{equation}\label{eq:scales:crucial}
\int_1^\infty \frac{ds}{s^2}\vert A_e(s)\vert<\infty.
\end{equation} 
See Proposition~\ref{appendix:proposition:integrability:main}. On the other hand, a similar argument to \eqref{eq:linear_coercivity}, shows that $\vert A_o(s)\vert\geq B_0 \vert s\vert$ for all $s\notin [a_0,a_0]$, for some $\{a_0,B_0\}\subset \mathbb{R}$. We shall refer to the latter property as ``coercivity of $A_o$''. 

%

Note also that by the DCT, $\mu-\mu^h\to \mu_0$ as $h\downarrow 0$. 

We next prove a key  lemma. While \eqref{lemma:FV:one} and \eqref{lemma:FV:three} thereof will both be used explicitly in the sequel, the same cannot be said of \eqref{lemma:FV:two}. Nevertheless, 
the proof of the latter is instructive of the techniques which we will be using, and so (also for the sake of 
completeness) we choose to keep it. 

\begin{lemma}\label{lemma:FV}
Suppose $\diffusion=0$, $\kappa(0)<\infty$. Let $\{l,a,b,M\}\subset \mathbb{N}_0$ and let $h_0\in (0,h_\star]$ be given by the coercivity condition \ref{(C)}.

\begin{enumerate}
\item\label{lemma:FV:one} If $a+b+l\geq M+1$, then: $$\sup_{(h,z)\in (0,h_0)\times \mathbb{R}}\left\vert \int_{[-\frac{\pi}{h},\frac{\pi}{h}]}e^{isz}\frac{(h\land 1)^{l-l\land (M+1)}h^{l\land (M+1)}s^M}{A(s)^{a}A^h(s)^{b}}ds\right\vert<\infty.$$
\item\label{lemma:FV:two} If only $a+b+l\geq M$, then: $$\sup_{(h,z)\in (0,h_0\land K)\times \mathbb{R}}\left\vert z \int_{[-\frac{\pi}{h},\frac{\pi}{h}]}e^{isz}\frac{h^ls^M}{A(s)^{a}A^h(s)^{b}}ds\right\vert<\infty$$ for any $K\in (0,\infty)$.
\item\label{lemma:FV:three} If even $a+b+l\geq M+2$, then: $$\sup_{(h,x,z)\in (0,h_0)\times \mathbb{R}\times (\mathbb{R}\backslash \{0\})}\left\vert\frac{1}{z } \int_{[-\frac{\pi}{h},\frac{\pi}{h}]}e^{isx}\frac{(e^{isz}-1)(h\land 1)^{l-l\land (M+2)}h^{l\land (M+2)}s^M}{A(s)^{a}A^h(s)^{b}}ds\right\vert<\infty.$$ 
\end{enumerate}
\end{lemma}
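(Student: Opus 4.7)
My plan is to handle all three parts via a common strategy: split the integration domain $[-\pi/h, \pi/h]$ into a bulk $[-a_0, a_0]$ and tails $\{a_0 \leq |s| \leq \pi/h\}$, where $a_0$ is the constant arising from the coercivity condition \ref{(C)}. On the bulk, $|A|$ and $|A^h|$ are bounded away from $0$ uniformly in $h \in [0, h_0)$ (by continuity, $\gamma > \Phi(q)$, and \ref{delta1}), so the integrand is pointwise bounded by $C h^l$. On the tails, \ref{(C)} yields $|A(s)|, |A^h(s)| \geq B_0 |s|$, whence the integrand is bounded by $C h^l |s|^{M - a - b}$, and the tail integral $\int_{a_0}^{\pi/h} |s|^{M-a-b}\,ds$ is estimated by elementary calculus.

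For part \ref{lemma:FV:one}, this direct triangle-inequality approach handles every sub-case of the hypothesis $a + b + l \geq M + 1$ except the edge case $a + b = M + 1$, $l = 0$, where one is left with an unbounded $O(\log(1/h))$ contribution; here cancellation is essential. I would use the decomposition $A = A_o + A_e$ from \eqref{eq:FVpaths_decomposition} and the analogous $A^h = A_o^h + A_e^h$ to write
\begin{equation*}
\frac{1}{A(s)^a A^h(s)^b} = \frac{1}{A_o(s)^{a + b}} + R(s, h),
\end{equation*}
where $|R(s, h)| \leq C(|A_e(s)| + |A_e^h(s)|)/|s|^{a + b + 1}$ on the tails. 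The remainder $R$ is absolutely integrable thanks to \eqref{eq:scales:crucial} (namely $\int s^{-2}|A_e(s)|\,ds < \infty$, which follows from Proposition~\ref{appendix:proposition:integrability:main}), together with a uniform-in-$h$ analogue for $A_e^h$ obtained by applying the same proposition to the discretised measure $\measure^h$. The main term reduces to $\int_{\{a_0 \leq |s| \leq \pi/h\}} e^{i s z}/s\,ds$: the cosine part vanishes by oddness and the sine part is a truncated sine integral, bounded uniformly in $z$ and $h$ by the classical Dirichlet estimate.

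Part \ref{lemma:FV:three} follows from the uniform bound $|(e^{i s z} - 1)/z| \leq |s|$, which reduces the estimate to that of part \ref{lemma:FV:one} with $M$ replaced by $M + 1$, requiring $a + b + l \geq M + 2$ exactly as assumed. For part \ref{lemma:FV:two}, I would convert the factor $z$ into decay via integration by parts in $s$,
\begin{equation*}
z \int e^{i s z} f(s)\,ds = -i \bigl[e^{i s z} f(s)\bigr]_{-\pi/h}^{\pi/h} + i \int e^{i s z} f'(s)\,ds
\end{equation*}
with $f(s) := h^l s^M /(A(s)^a A^h(s)^b)$: the boundary term is of order $h^{l + a + b - M}$, bounded by the hypothesis, and the interior integral reduces, using the uniform bounds $|A'|, |A^{h\prime}| \leq C$ from \ref{delta1D} (with $\epsilon = 1$ in the finite-variation regime), to part \ref{lemma:FV:one} with $M$ replaced by $M - 1$; the corresponding requirement $a + b + l \geq M$ matches the hypothesis.

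The main obstacle I anticipate is securing the uniform-in-$h$ integrability of $A_e^h$ needed to close the edge case of part \ref{lemma:FV:one}: while the scalar statement $\int s^{-2}|A_e(s)|\,ds < \infty$ is furnished by Proposition~\ref{appendix:proposition:integrability:main}, extracting the analogous bound with constants independent of $h \in (0, h_0)$ requires careful tracking of the tail structure of $\measure^h$ (via the construction in \eqref{eq:laplace_exponent:scheme2}) and the vague convergence $\measure^h \to \measure$, so that Proposition~\ref{appendix:proposition:integrability:main} yields a uniform estimate.
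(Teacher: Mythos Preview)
Your overall architecture is close to the paper's, but parts~\ref{lemma:FV:two} and~\ref{lemma:FV:three} contain a common gap, and in part~\ref{lemma:FV:one} you have made your life harder than necessary.

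\textbf{Parts~\ref{lemma:FV:two} and~\ref{lemma:FV:three}.} The inequality $|(e^{isz}-1)/z|\le |s|$ is an absolute-value bound, so invoking it forces the triangle inequality: you are then estimating $\int h^{l}|s|^{M+1}/(|A|^{a}|A^h|^{b})\,ds$, not an oscillatory integral. In the edge case $l=0$, $a+b=M+2$ this gives $\int_{a_0}^{\pi/h}|s|^{-1}ds=O(\log(1/h))$ and diverges; you cannot then ``apply part~\ref{lemma:FV:one} with $M+1$'', because part~\ref{lemma:FV:one} bounds $\bigl|\int e^{is\tilde z}\cdots\,ds\bigr|$, and $(e^{isz}-1)/z$ is not a single exponential. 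Exactly the same issue arises in part~\ref{lemma:FV:two}: bounding $|A'|,|A^{h\prime}|\le C$ and ``reducing to part~\ref{lemma:FV:one}'' again drops the oscillation and fails when $l=0$, $a+b=M$. The paper never takes absolute values here. For part~\ref{lemma:FV:two} it expands $A'$ and $A^{h\prime}$ via their integral representations (Remark~\ref{remark:A'forFV}, \eqref{eq:FV:derivative_of_Ah}) and uses Fubini, so the inner $ds$-integral is genuinely of part-\ref{lemma:FV:one} type with a shifted $z$. For part~\ref{lemma:FV:three} it repeats the reductions of part~\ref{lemma:FV:one} (replace $A^h\to A\to A_o$), splits $e^{isz}-1=(\cos(sz)-1)+i\sin(sz)$, and treats each piece by a substitution plus, for the sine piece, another integration by parts. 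A cleaner salvage of \emph{your} idea is to write $(e^{isz}-1)/z=is\int_0^1 e^{istz}\,dt$ and apply Fubini, which does make the inner integral literally of part-\ref{lemma:FV:one} form with $M+1$ and variable $x+tz$.

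\textbf{Part~\ref{lemma:FV:one}.} You correctly flag the uniform-in-$h$ analogue of \eqref{eq:scales:crucial} for $A_e^h$ as the obstacle, but you do not resolve it. The paper sidesteps this entirely: \emph{before} bringing in the decomposition $A=A_o+A_e$, it replaces each factor of $A^h$ in the denominator by $A$, one at a time, using $|A^h-A|\le A_0 h|\beta|^2$ from \ref{delta2}; by coercivity \ref{(C)} each such replacement costs $\int h|s|^{M+2}/|s|^{a+b+1}\,ds$, which in the edge case $a+b=M+1$ is $h\cdot(2\pi/h)=O(1)$. After this there is no $A^h$ left, and only the single $h$-independent bound \eqref{eq:scales:crucial} is needed. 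Your final reduction ``to $\int e^{isz}/s\,ds$'' is also too quick: the integrand is $e^{isz}s^M/A_o(s)^{M+1}$, merely asymptotic to $c/s$. The paper uses oddness to retain only $\sin(sz)$, substitutes $u=s|z|$, bounds the region $|u|\le 1$ via $|\sin u|\le|u|$, and on $|u|>1$ integrates by parts, using boundedness of $A_o'$, to close the estimate.
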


\begin{remark}
Suppose $l=b=0$ and $a=M+1$ (respectively $a=M$, $a=M+2$) for simplicity (indeed the proof of \eqref{lemma:FV:one} and \eqref{lemma:FV:three} will be reduced to this case using \ref{(C)} and \ref{delta2}, whereas \eqref{lemma:FV:two} will (essentially) follow by an application of \eqref{lemma:FV:one}). Then for large $\vert s\vert$, the integrand in \eqref{lemma:FV:one} (respectively \eqref{lemma:FV:two}, \eqref{lemma:FV:three}) behaves as $\sim e^{isz}/s$ (respectively $e^{isz}$, $e^{isx}(e^{isz}-1)/s^2$) in the variable $s$. It is then not surprising that the proof of the claims is essentially a modification of the argument 
implying that (in the sense of Cauchy's principal values, as appropriate) $\int_{[-\pi/h,\pi/h]}(e^{isz}/s)ds$, $\int_{[-\pi/h,\pi/h]} z e^{isz}ds$ and $\int_{[-\pi/h,\pi/h]}(e^{isx}(e^{isz}-1)/s^2)ds/z$ are bounded in the relevant suprema (as they are).
\end{remark}
\begin{proof}
We use in the proof, without explicit reference, as has already often been the case, the observation that  $\vert sh\vert\leq\pi$ within the integration domain and the basic $\vert \int\cdot ds\vert\leq\int\vert\cdot\vert ds$-argument; it will be clear whenever these are being applied. Further, we sometimes employ, but do not always refer to, some elementary trigonometric inequalities, specifically $\vert \sin(v)\vert\leq \vert v\vert$, $1-\cos(v)\leq v^2/2$ ($v\in \mathbb{R}$), as well as \ref{lemma:technical:3rdorder:i} of Lemma~\ref{lemma:fundamental:inequalities}. 
Finally, note that the integrands in the formulation of this lemma are certainly locally bounded, by coercivity \ref{(C)}, and (hence) the integrals well-defined. Indeed, by the same token, it is only non-trivial to show the finiteness of the suprema in \eqref{lemma:FV:one} and \eqref{lemma:FV:three} for $h\in (0,h_0\land 1)$, a restriction which we therefore make outright. In this respect, note that in \eqref{lemma:FV:two} the restriction to bounded $h$ is made a priori.

Consider now first \eqref{lemma:FV:one}. By coercivity \ref{(C)}, it is assumed without loss of generality that $l\leq M$, then $a+b=M-l+1$ and finally, $l=0$. 

Next, it will be sufficient to consider the case when $b=0$, since $A(s)$ can then be successively replaced in the denominator by $A^h(s)$ modulo a quantity, which, using coercivity \ref{(C)} and the estimates \ref{delta2}, remains bounded (in the supremum over $(h,z)\in (0,h_0\land 1)\times \mathbb{R}$). 

Recall now the decomposition $A=A_e+A_o$ of \eqref{eq:FVpaths_decomposition}. With $a_0$ as above, it will furthermore be sufficient to establish \eqref{lemma:FV:one} with the integration region $[-\pi/h,\pi/h]\backslash [-a_0,a_0]$ in place of $[-\pi/h,\pi/h]$, the integrand being locally bounded in the supremum over $(h,z)\in (0,h_0\land 1)\times \mathbb{R}$ (again by \ref{(C)}). Moreover, one can then successively replace $A(s)$ by $A_o(s)$, using coercivity of $A_o$ and \ref{(C)}, as well as \eqref{eq:scales:crucial}. Again this is done modulo a term which remains bounded in the supremum over $(h,z)\in (0,h_0\land 1)\times \mathbb{R}$. Hence we need only establish the finiteness of the quantity: $$\sup_{(h,z)\in (0,h_0\land 1)\times \mathbb{R}}\left\vert \int_{[-\frac{\pi}{h},\frac{\pi}{h}]\backslash [-a_0,a_0]}e^{isz}\frac{s^{M}}{A_o(s)^{M+1}}ds\right\vert.$$ Owing to the fact that the quotient in the integrand is odd in $s$, we may clearly restrict the supremum to $z\in \mathbb{R}\backslash \{0\}$, replacing also $e^{isz}$ by $\sin(sz)$ therein. A change of variables $u=s\vert z\vert$ then leads us to consider: 
\begin{equation}\label{eq:scales:FV:fund}
\int_{[-\frac{\pi}{h}\vert z\vert,\frac{\pi}{h}\vert z\vert]\backslash [-a_0\vert z\vert,a_0\vert z\vert]}\sin(u)\frac{u^M}{(\vert z \vert A_o(u/\vert z\vert))^{M+1}}du,
\end{equation}
whose finiteness in the supremum over $(h,z)\in (0,h_0\land 1)\times \mathbb{R}\backslash \{0\}$ we seek to establish. Let $\mathcal{A}:=[-\frac{\pi}{h}\vert z\vert,\frac{\pi}{h}\vert z\vert]\backslash [-a_0\vert z\vert,a_0\vert z\vert]$. By coercivity of $A_o$, and since $\vert \sin(u) \vert\leq \vert u\vert$ ($u\in  \mathbb{R}$), we do indeed get a finite quantity for the integral (in \eqref{eq:scales:FV:fund}) over $\mathcal{A}\cap [-1,1]$. On the other hand, to handle the rest of the domain, $\mathcal{A}\backslash [-1,1]$, we resort to integration by parts; 
\begin{eqnarray*}
\frac{d}{du}\left(\cos(u)\frac{u^M}{(\vert z \vert A_o(u/\vert z\vert))^{M+1}}\right)&=&-\sin(u)\frac{u^M}{(\vert z \vert A_o(u/\vert z\vert))^{M+1}}+\\
&+&\cos(u)\left(\frac{Mu^{M-1}}{(\vert z \vert A_o(u/\vert z\vert))^{M+1}}-\frac{u^M(M+1)A_o'(u/\vert z\vert)}{(\vert z\vert A_o(u/ \vert z\vert))^{M+2}}\right).
\end{eqnarray*} 
Now, once integration over $\mathcal{A}\backslash [-1,1]$ has been performed, on the left-hand side, a bounded quantity obtains, by coercivity of $A_o$. On the right-hand side we obtain from the first term the desired quantity (modulo the sign), whereas what emerges from the second term is bounded by coercivity of $A_o$, and the boundedness of $A_o'$ (see \ref{delta1D} in Subsection~\ref{subsection:_the_difference_of_derivatives}). 

We next consider \eqref{lemma:FV:two}. Here an integration by parts must be done outright, thus: 
\footnotesize
\begin{equation*}
\frac{d}{ds}\left( e^{isz} \frac{h^ls^M}{A(s)^{a}A^h(s)^{b}}\right)=iz e^{isz}\frac{h^ls^M}{A(s)^{a}A^h(s)^{b}}+e^{isz}\left(\frac{h^lMs^{M-1}}{A(s)^{a}A^h(s)^{b}}\right)-e^{isz}\left(\frac{h^ls^MaA'(s)}{A(s)^{a+1}A^h(s)^{b}}+\frac{h^ls^MbA^{h\prime}(s)}{A(s)^{a}A^h(s)^{b+1}}\right).
\end{equation*}
\normalsize
Further, once integration over $[-\pi/h,\pi/h]$ has been performed in this last equality, on the left-hand side a bounded quantity obtains by coercivity \ref{(C)}. On the right-hand side, the first term yields the desired quantity (modulo a non-zero multiplicative constant), and the second is bounded by part \eqref{lemma:FV:one}. Now, using \eqref{eq:FV:derivative_of_A} and \eqref{eq:FV:derivative_of_Ah}, via Fubini's Theorem, part \eqref{lemma:FV:one} again, and by elementary estimates such as $e^{\gamma h}$ being bounded for $h$ bounded, $\vert\sum_{y\in\Zh^{--}}yc_y^he^{\beta y}\vert\leq 2e^{\gamma h/2}\int \vert y\vert e^{\gamma y}\measure(dy)$, $c_0^h=O(h)$ and \ref{lemma:technical:estimates:i} of Lemma~\ref{lemma:technical:estimates}, the claim obtains. 

Finally, we are left to consider \eqref{lemma:FV:three}.  Again by coercivity \ref{(C)}, it is assumed without loss of generality that $l\leq M$, then $a+b=M-l+2$ and finally, $l=0$. Moreover, by the same argument as for \eqref{lemma:FV:one}, we may further insist on $b=0$, replace the integration region by $[-\pi/h,\pi/h]\backslash [-a_0,a_0]$ and finally $A$ by $A_o$. Thus we are left to analyse: 
\begin{equation}\label{eq:FV:lemma:two_integrals}
\frac{1}{z } \int_{[-\frac{\pi}{h},\frac{\pi}{h}]\backslash [-a_0,a_0]}e^{isx}\frac{(\cos(sz)-1)s^M}{A_o(s)^{M+2}}ds\quad\text{ and }\quad\frac{1}{z } \int_{[-\frac{\pi}{h},\frac{\pi}{h}]\backslash [-a_0,a_0]}e^{isx}\frac{\sin(sz)s^M}{A_o(s)^{M+2}}ds,
\end{equation}
which we require both to be bounded in the relevant supremum. 

In the first integral of \eqref{eq:FV:lemma:two_integrals} make the substitution $v=s\vert z\vert$ to obtain: $$\mathrm{sgn}(z) \int_{[-\frac{\pi}{h}\vert z\vert,\frac{\pi}{h}\vert z\vert]\backslash [-a_0\vert z\vert,a_0\vert z\vert]}e^{i vx/\vert z\vert}\frac{(\cos(v)-1)v^M}{(\vert z\vert A_o(v/\vert z\vert))^{M+2}}dv.$$ Letting, as usual, $\mathcal{A}:=[-\frac{\pi}{h}\vert z\vert,\frac{\pi}{h}\vert z\vert]\backslash [-a_0\vert z\vert,a_0\vert z\vert]$, the integral over $\mathcal{A}\backslash [-1,1]$ (respectively $\mathcal{A}\cap  [-1,1]$) is bounded by coercivity of $A_o$ (respectively the latter and since $1-\cos(v)\leq v^2/2$). 

On the other hand, in the second integral of  \eqref{eq:FV:lemma:two_integrals}, note that by the oddness of $A_o$ only $\sin(sx)$ makes a non-zero contribution. Then we may assume $x\ne 0$ and make the substitution $u=s\vert x\vert$ to arrive at: $$\frac{x}{z}\int_{[-\frac{\pi}{h}\vert x\vert,\frac{\pi}{h}\vert x\vert]\backslash [-a_0\vert x\vert,a_0\vert x\vert]}\frac{\sin(u)\sin(uz/\vert x\vert)u^M}{(\vert x\vert A_o(u/\vert x\vert))^{M+2}}du.$$ Let again $\mathcal{A}:=[-\frac{\pi}{h}\vert x\vert,\frac{\pi}{h}\vert x\vert]\backslash [-a_0\vert \vert,a_0\vert x\vert]$ be the domain of integration. It is clear that the integral over $\mathcal{A}\cap [-1,1]$ is finite, using coercivity of $A_o$ and twice $\vert \sin(w)\vert\leq \vert w\vert$ ($w\in \mathbb{R}$). To handle the remainder of the domain, $\mathcal{A}\backslash [-1,1]$, we use one last time integration by parts, thus;
\footnotesize
\begin{eqnarray*}
 \frac{x}{z}\frac{d}{du}\left(\frac{\cos(u)\sin(uz/\vert x\vert)u^M}{(\vert x\vert A_o(u/\vert x\vert))^{M+2}}\right)&=&-\frac{x}{z}\frac{\sin(u)\sin(uz/\vert x\vert)u^M}{(\vert x\vert A_o(u/\vert x\vert))^{M+2}}+\mathrm{sgn}(x)\frac{\cos(u)\cos(uz/\vert x\vert)u^M}{(\vert x\vert A_o(u/\vert x\vert))^{M+2}}+\\
&&\frac{x}{z}\frac{\cos(u)\sin(uz/\vert x\vert)Mu^{M-1}}{(\vert x\vert A_o(u/\vert x\vert))^{M+2}}-\frac{x}{z}\frac{\cos(u)\sin(uz/\vert x\vert)u^M(M+2)A_o'(u/\vert x\vert)}{(\vert x\vert A_o(u/\vert x\vert))^{M+3}}.
\end{eqnarray*}
\normalsize
The claim now obtains by coercivity of $A_o$, boundedness of $A_o'$ \ref{delta1D} and by using the elementary estimate $\vert \sin(w)\vert\leq \vert w\vert$ ($w\in \mathbb{R}$), as appropriate.  
\end{proof}

\begin{proposition}[$\diffusion=0$, $\kappa(0)<\infty$  ($\Wq$ convergence)]\label{proposition:Wqconvergence:trivial_diffusion_FV}
Suppose $\diffusion=0$ and $\kappa(0)<\infty$. Let $q\geq 0$. 

\begin{enumerate}[(i)]
\item\label{diffusion_zero_finite_variation:i} For any $\gamma>\Phi(q)$  there are $\{A_0,h_0\}\subset (0,\infty)$ such that for all $h\in (0,h_0)$ and then all $x\in \Zh^{+}$: $$\left\vert\diffW\right\vert\leq A_0\frac{h}{x}e^{\gamma x}.$$
\item\label{diffusion_zero_finite_variation:ii} For the L\'evy triplet $(0,\delta_{-1},1)$ and the nested sequence $(h_n:=1/2^n)_{n\geq 1}\downarrow 0$, for each $q\geq 0$ and any $x\in \cup_{n\geq 1}\mathbb{Z}_{h_n}^{+}\cap [0,1)$: $$\lim_{n\to\infty}\frac{\diffWn}{h_n}=e^{x(1+q)}\frac{1}{2}(1+q)^2x.$$
\end{enumerate}

\end{proposition}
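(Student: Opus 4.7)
The plan is to treat (i) and (ii) separately. Part (i) rests on the integral representation \eqref{eq:difference:Wqs} (with $\delta^0=0$, so all three terms $(a), (b), (c)$ appear), coercivity \ref{(C)}, the estimates of $\vert\psi^h-\psi\vert$ in \ref{delta2}, and above all the workhorse Lemma~\ref{lemma:FV}. The goal is to show that $x\cdot e^{-\gamma x}\vert\Delta_W^{(q)}(x,h)\vert$ is $O(h)$ uniformly for $x\in\Zh^{++}$, i.e.\ each of (a), (b), (c) is $O(h/x)$.

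For term $(a)$ I would integrate by parts in $s$, writing $e^{isx}=(ix)^{-1}d e^{isx}/ds$. The boundary contributions at $s=\pm\pi/h$ are $O(h/x)$ by linear coercivity \ref{(C)}, and the leftover integral $(ix)^{-1}\int_{\vert s\vert>\pi/h}e^{isx}A'(s)/A(s)^2\,ds$ is $O(x^{-1}\int_{\vert s\vert>\pi/h}ds/s^2)=O(h/x)$, using boundedness of $A'$ (by \ref{delta1D} and Remark~\ref{remark:A'forFV}). For term $(c)$, the crucial trick is to represent $1-e^{ish}=-\int_0^h is\,e^{isr}dr$, so that
\begin{equation*}
(c)=-\int_0^h dr\int_{-\pi/h}^{\pi/h} is\,e^{is(x+r)}/A^h(s)\,ds.
\end{equation*}
Lemma~\ref{lemma:FV}\eqref{lemma:FV:two} with $a=0,b=1,l=0,M=1$ bounds the inner integral by $C/(x+r)$, and integration over $r\in[0,h]$ yields $O(\log(1+h/x))=O(h/x)$.

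Term $(b)$ is the main obstacle. I would decompose $\psi^h-\psi$ as in the proof of \ref{delta2} under $\diffusion=0$, $\kappa(0)<\infty$: the drift discretisation contributes $\mu_0(\frac{e^{\beta h}-1}{h}-\beta)=\mu_0\beta\int_0^1(e^{\beta h t}-1)dt$, and the L\'evy-measure piece splits into a sum of cell-wise differences over $A_y^h$ plus the local part over $[-h/2,0)$. Applying the elementary identity $e^{\beta h t}-1=e^{\gamma h t}(e^{isht}-1)+(e^{\gamma h t}-1)$, the second summand is $O(h)$ times $\int\beta e^{isx}/(A A^h)ds$, which by Lemma~\ref{lemma:FV}\eqref{lemma:FV:two} is $O(1/x)$; the first summand is handled exactly as in $(c)$, writing $e^{isht}-1=\int_0^{ht}is\,e^{isr}dr$, expanding $is\beta=is\gamma-s^2$, and invoking Lemma~\ref{lemma:FV}\eqref{lemma:FV:two} (with $M=1$ and $M=2$) to obtain $O(\log(1+ht/x))=O(ht/x)$ before integrating in $t$. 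The L\'evy-measure terms, each of the form $\sum_y\int_{A_y^h}e^{\beta z}(e^{\beta(y-z)}-1)\,\measure(dz)$ or $\int_{[-h/2,0)}(e^{\beta z}-1)\measure(dz)$, carry a natural factor of order $h\vert\beta\vert$ uniformly on $A_y^h$ (using $\vert y-z\vert\le h/2$), and after swapping sum/integral via Fubini they too reduce to sub-integrals of $\int s^j e^{is(x+r)}/(AA^h)ds$, $j\in\{0,1,2\}$, all covered by Lemma~\ref{lemma:FV}\eqref{lemma:FV:two}.

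For part (ii), the triplet $(0,\delta_{-1},1)$ has $V=0$, $\mu^h=0=c_0^h$, and for $h_n=1/2^n$ the atom $-1$ falls in $A_{-1}^{h_n}$, giving $c_{-1}^{h_n}=1$ and $c_y^{h_n}=0$ otherwise. The IDE \eqref{eq:integro-differential} reduces on $[0,1)$ to $W^{(q)\prime}(x)=(1+q)W^{(q)}(x)$ with $W^{(q)}(0)=1$, so $W^{(q)}(x)=e^{(1+q)x}$. In parallel, \eqref{eq:integrodiff:scheme2} becomes $W_h^{(q)}(mh)=1+h(1+q)\sum_{j=0}^{m-1}W_h^{(q)}(jh)$ for $mh<1$, giving $W_h^{(q)}(mh)=(1+(1+q)h)^m$. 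Taylor expansion of $\log(1+(1+q)h)=(1+q)h-(1+q)^2h^2/2+O(h^3)$ yields
\begin{equation*}
\Delta_W^{(q)}(x,h_n)=e^{(1+q)x}\frac{(1+q)^2 x}{2}h_n+O(h_n^2),
\end{equation*}
establishing the stated limit. The hardest step by far is the estimate for $(b)$: the only linear coercivity available in the finite-variation case rules out any na\"ive $\vert{\int}\cdot\vert\le\int\vert\cdot\vert$ estimate, and the sharp $1/x$ factor can only be recovered through the $t$-parametrised integral representation of the drift discretisation error combined with repeated application of Lemma~\ref{lemma:FV}\eqref{lemma:FV:two}.
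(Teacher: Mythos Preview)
Your treatment of term~(a), of term~(c), of the drift contribution to~(b), and of part~(ii) is correct. In particular, your parametric device $1-e^{ish}=-\int_0^h is\,e^{isr}\,dr$ together with Lemma~\ref{lemma:FV}\eqref{lemma:FV:two} is a clean alternative to the paper's integration-by-parts handling of~(c), and your explicit computation for~(ii) is exactly right.

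There is, however, a genuine gap in your plan for the L\'evy-measure contribution to term~(b). After Fubini on $\sum_y\int_{A_y^h}e^{\beta z}(e^{\beta(y-z)}-1)\,\measure(dz)$, the oscillating factor in the $s$-integral becomes $e^{is(x+z)}$ (or $e^{is(x+z+v)}$ with $|v|\le h/2$), where $z$ ranges over the support of $\measure$ in $(-\infty,0)$. Lemma~\ref{lemma:FV}\eqref{lemma:FV:two} then yields a bound of order $C/|x+z|$, not $C/x$; for $z$ near $-x$ this blows up, and integrating against $\measure(dz)$ does not recover an $O(h/x)$ estimate uniformly (consider e.g.\ an atom of $\measure$ at $-x$). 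Your $r$ is genuinely large here, unlike in~(c) or the drift piece where $r\in[0,h]$.

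The paper sidesteps this by performing integration by parts on the \emph{entirety} of~(b) before any Fubini on the L\'evy integral, writing
\[
ix\int_{-\pi/h}^{\pi/h}e^{isx}\Bigl(\tfrac{1}{A}-\tfrac{1}{A^h}\Bigr)\,ds
=\bigl[\text{boundary, }O(h)\bigr]
-\int_{-\pi/h}^{\pi/h}e^{isx}\Bigl(-\tfrac{A'}{A^2}+\tfrac{A^{h\prime}}{(A^h)^2}\Bigr)\,ds.
\]
This extracts the global factor $1/x$ first. The remaining integral, after the decomposition~\eqref{eq:finite_variation:zero} and Fubini in the L\'evy variable, produces terms of the form $\int e^{is(x+y)}(\cdots)/A^2\,ds$ or $\int e^{is(x+z)}(e^{is(y-z)}-1)(\cdots)/A^3\,ds$, which are now bounded \emph{uniformly} in the shift $x+y$ (resp.\ $x+z$) via Lemma~\ref{lemma:FV}\eqref{lemma:FV:one} and~\eqref{lemma:FV:three}; no $1/|x+z|$ factor is needed at this stage. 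In short, the order of operations matters: integrate by parts first, then Fubini --- not the other way round --- and the appropriate tools are parts~\eqref{lemma:FV:one} and~\eqref{lemma:FV:three} of Lemma~\ref{lemma:FV} rather than part~\eqref{lemma:FV:two}.
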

\begin{proof}
With respect to~\ref{diffusion_zero_finite_variation:i}, we estimate the three terms appearing on the right-hand side of \eqref{eq:difference:Wqs} one by one. 

First,~\ref{W:a} in \eqref{eq:difference:Wqs} is easily seen to be of order $\frac{1}{x}O(h)$ by an obvious integration by parts argument, using coercivity and the fact that $A'$ is bounded. 

Second, when it comes to~\ref{W:b} in \eqref{eq:difference:Wqs}, an integration by parts is also performed immediately: $$\frac{d}{ds}\left(e^{isx}\left(\frac{1}{A(s)}-\frac{1}{A^h(s)}\right)\right)=ix e^{isx}\left(\frac{1}{A(s)}-\frac{1}{A^h(s)}\right)+e^{isx}\left(-\frac{A'(s)}{A(s)^2}+\frac{A^{h\prime}(s)}{A^h(s)^2}\right).$$ Upon integration on $[-\pi/h,\pi/h]$, by coercivity, the left-hand side is of order $O(h)$ and hence will contribute $\frac{1}{x}O(h)$ to the right-hand side of \eqref{eq:difference:Wqs}. Write: 
\begin{equation}\label{eq:finite_variation:zero}
-\frac{A'(s)}{A(s)^2}+\frac{A^{h\prime}(s)}{A^h(s)^2}=\underbrace{\frac{A^{h\prime}(s)-A'(s)}{A(s)^2}}_{\mytag{(1)}{FV:(1)}}+\underbrace{A^{h\prime}(s)\frac{(A(s)-A^h(s))(A(s)+A^h(s))}{A(s)^2A^h(s)^2}}_{\mytag{(2)}{FV:(2)}}.
\end{equation}

We focus on each term one at a time. For \ref{FV:(1)}, there corresponds to it, modulo non-zero multiplicative constants, and by Fubini (using \eqref{eq:FV:derivative_of_A} and \eqref{eq:FV:derivative_of_Ah}): 
\begin{eqnarray}\label{eq:finite_variation:one}
\nonumber&\phantom{=}&(\mu-\mu^h)\int_{[-\pi/h,\pi/h]}\frac{e^{\beta h}-1}{A(s)^2}e^{isx}ds+c_0^h\int_{[-\pi/h,\pi/h]}\left(\frac{e^{\beta h}-e^{-\beta h}}{2h}\right)\frac{1}{A(s)^2} e^{isx}ds\\
&+&\sum_{y\in \Zh^-}\int_{A_y^h}\measure(dz)\int_{[-\pi/h,\pi/h]}\frac{y(e^{\beta y}-\mathbbm{1}_{[-V,0)}(z))-z(e^{\beta z}-\mathbbm{1}_{[-V,0)}(z))}{A(s)^2}e^{isx}ds.
\end{eqnarray} 
There are three summands in \eqref{eq:finite_variation:one}. The first is $O(h)$ by employing the decomposition $e^{\beta h}-1=(e^{\beta h}-\beta h-1)+\beta h$ and then using \ref{lemma:technical:estimates:iii} of Lemma~\ref{lemma:technical:estimates} and coercivity \ref{(C)} (respectively \eqref{lemma:FV:one} of Lemma~\ref{lemma:FV}) for the first (respectively second) term. The same is true of the second summand, noting that, for sure, $c_0^h=O(h)$, employing the decomposition $\frac{e^{\beta h}-e^{-\beta h}}{2h}=\left(\frac{e^{\beta h}-e^{-\beta h}}{2h}-\beta\right)+\beta$ and then using \ref{lemma:technical:estimates:rough:II} of Lemma~\ref{lemma:technical:estimates} and again coercivity \ref{(C)} (respectively \eqref{lemma:FV:one} of Lemma~\ref{lemma:FV}) for the first (respectively second) term.  As for the third summand of \eqref{eq:finite_variation:one}, write when $z\notin [-V,0)$: $$ye^{\beta y}-ze^{\beta z}=\overbrace{(y-z)e^{\beta y}}^{\mytag{(I)}{FV:I}}+ze^{\beta z}(\overbrace{e^{\beta(y-z)}-\beta(y-z)-1}^{\mytag{(II)}{FV:II}}+\overbrace{\beta(y-z)}^{\mytag{(III)}{FV:III}}).$$ By the findings of Lemma~\ref{lemma:technical:estimates}, the fact that $\measure(-\infty,-V)<\infty$ and coercivity \ref{(C)}, it is clear that \ref{FV:I} and \ref{FV:II} will contribute a term of order $\frac{1}{x}O(h)$ to the right-hand side of \eqref{eq:difference:Wqs}. On the other hand, for \ref{FV:III}, the same follows by \eqref{lemma:FV:one} of Lemma~\ref{lemma:FV}. Next, when $z\in [-V,0)$ we write: 
\begin{equation}
ye^{\beta y}-ze^{\beta z}-(y-z)=\overbrace{e^{\beta z}y(e^{\beta (y-z)}-1)}^{\mytag{(I)}{FV_I}}+\overbrace{(y-z)(e^{\beta z}-1)}^{\mytag{(II)}{FV_II}}.
\end{equation}
When it comes to \ref{FV_I}, it is dealt with precisely as it was for $z\notin [-V,0)$ (but note also that $\vert y\vert\leq 2\vert z\vert$). With regard to \ref{FV_II}, apply \eqref{lemma:FV:three} of Lemma~\ref{lemma:FV}. 

To handle \ref{FV:(2)} of \eqref{eq:finite_variation:zero}, i.e.: $$A^{h\prime}(s)\frac{(A(s)-A^h(s))(A(s)+A^h(s))}{A(s)^2A^h(s)^2},$$ notice that in the difference to replacing this with $$2A'(s)\frac{(A(s)-A^h(s))}{A(s)^3}$$ a term of $\frac{1}{x}O(h)$ is contributed to \eqref{eq:difference:Wqs} (just make successive replacements $A^h\to A$ and study the difference by taking advantage of coercivity \ref{(C)}, \ref{delta2} and \ref{delta2D}. So it is in fact sufficient to study: $$\int_{[-\pi/h,\pi/h]}e^{isx}A'(s)\frac{(A(s)-A^h(s))}{A(s)^3}ds.$$ Now we do first Fubini for $A'$ (via Remark~\ref{remark:A'forFV}), and get, beyond a factor of $i$: $$\mu_0 \int_{[-\pi/h,\pi/h]}e^{isx}\frac{(A(s)-A^h(s))}{A(s)^3}ds+\int \measure(dy)ye^{\gamma y}\int_{[-\pi/h,\pi/h]}e^{is(x+y)}\frac{(A(s)-A^h(s))}{A(s)^3}ds.$$ So what we would really like, is to show that:
\footnotesize
\begin{eqnarray}
\nonumber&& \int_{[-\pi/h,\pi/h]}e^{isz}\frac{(A^h(s)-A(s))}{A(s)^3}ds\\\nonumber
&=&(\mu-\mu^h)\int_{[-\pi/h,\pi/h]}e^{isz}\frac{\frac{e^{\beta h}-1}{h}-\beta}{A(s)^3}ds+c_0^h\int_{[-\pi/h,\pi/h]}e^{isz}\left(\frac{e^{\beta h}+e^{-\beta h}-2}{2h^2}\right)\frac{1}{A(s)^3}ds\\
&+&\sum_{y\in \Zh^-}\int_{A_y^h}\measure(du)\int_{[-\pi/h,\pi/h]}e^{isz}\frac{e^{\beta y}-\beta y\mathbbm{1}_{[-V,0)}(u)-(e^{\beta u}-\beta u\mathbbm{1}_{[-V,0)}(u))}{A(s)^3}ds \label{eq:FV:Wqs:another_one}
\end{eqnarray}
\normalsize
is bounded by a (constant times $h$), uniformly in $z\in\mathbb{R}$ (it will then follow immediately that a term of $\frac{1}{x}O(h)$ is being contributed to \eqref{eq:difference:Wqs}). 
\begin{itemize}
\item Now, the part corresponding to $\mathbbm{1}_{(-\infty,-V)}\cdot \measure$, namely: $$\sum_{y\in \Zh^-}\int_{A_y^h\cap (-\infty,-V)}\measure(du)\int_{[-\pi/h,\pi/h]}e^{isz}\frac{e^{\beta y}-e^{\beta u}}{A(s)^3}ds,$$ is clearly so. 
\item With respect to the term involving $c_0^h=O(h)$, make the decomposition: $$\frac{e^{\beta h}+e^{-\beta h}-2}{2h^2}=\left(\frac{e^{\beta h}+e^{-\beta h}-2}{2h^2}-\frac{\beta^2}{2}\right)+\frac{\beta^2}{2}.$$ Then use coercivity \ref{(C)} and \ref{lemma:technical:estimates:rough:I} of Lemma~\ref{lemma:technical:estimates} (respectively \eqref{lemma:FV:one} of Lemma~\ref{lemma:FV}) for the first (respectively second) term. 
\item As regards: $$(\mu-\mu^h)\int_{[-\pi/h,\pi/h]}e^{isz}\frac{\frac{1}{h}(e^{\beta h}-1)-\beta}{A(s)^3}ds$$ write:
\begin{equation}
\frac{1}{h}\left(e^{\beta h}-1\right)-\beta=\frac{1}{h}\left(\overbrace{e^{\beta h}-\frac{\beta^2 h^2}{2}-\beta h-1}^{\mytag{(I)}{FVI}}+\overbrace{\frac{\beta^2 h^2}{2}}^{\mytag{(II)}{FVII}}\right).
\end{equation}
By an expansion into a series, which converges absolutely and locally uniformly, and coercivity \ref{(C)}, it is clear that \ref{FVI} has the desired property, whereas \eqref{lemma:FV:one} of Lemma~\ref{lemma:FV} may be applied to \ref{FVII}. 
\item Finally it will be sufficient to consider: $$\sum_{y\in\Zh^{-}}\int_{A_y^h\cap [-V,0)}\measure(du)\int_{[-\pi/h,\pi/h]}e^{isz}\frac{e^{\beta u}-\beta u-(e^{\beta  y}-\beta y)}{A(s)^3}ds,$$ which we need bounded by a (constant times $h$) uniformly in $z\in \RR$. For this to work it is sufficient that the innermost integral produces ($\vert u\vert$ times $h$ times a constant). Moreover, it is enough to produce $\vert y\vert$ (or, a fortiori, $\vert y-u\vert$) in place of $\vert u\vert$. Now write: 
\begin{eqnarray*}
e^{\beta u}-\beta u-(e^{\beta  y}-\beta y)&=&(e^{\beta y}-1)\left((e^{\beta (u-y)}-\beta(u-y)-1)+\beta(u-y)\right)+\\
&&\left(e^{\beta(u-y)}-\frac{\beta^2(u-y)^2}{2}-\beta(u-y)-1\right)+\frac{\beta^2(u-y)^2}{2}.
\end{eqnarray*}
These terms can now be dealt with in part straightforwardly and in part by employing \eqref{lemma:FV:three} and \eqref{lemma:FV:one} of Lemma~\ref{lemma:FV}.
\end{itemize}

Third, with respect to~\ref{W:c} of \eqref{eq:difference:Wqs}, again an integration by parts is made outright, thus: 
$$\frac{d}{ds}\left(e^{isx}\frac{1-e^{ish}}{A^h(s)}\right)=ixe^{isx}\frac{1-e^{ish}}{A^h(s)}+e^{isx}\frac{-ihe^{ish}}{A^h(s)}-e^{isx}\frac{(1-e^{ish})A^{h\prime}(s)}{A^h(s)^2}.$$ Now the left-hand side is handled using coercivity \ref{(C)}. On the right-hand side, we apply \eqref{lemma:FV:one} of Lemma~\ref{lemma:FV} to the second term. Finally, in the third term on the right-hand side we may replace $A^{h\prime}(s)$ by $A'(s)$, followed by Fubini for $A'$ and an application of \eqref{lemma:FV:three} of Lemma~\ref{lemma:FV}. All in all, a term of order $\frac{1}{x}O(h)$ thus emerges on the right-hand side of \eqref{eq:difference:Wqs}. 

Part~\ref{diffusion_zero_finite_variation:ii} can be obtained by explicit computation, and is elementary. 
\end{proof}

\begin{proposition}[$\diffusion=0$, $\kappa(0)<\infty$  ($\Zq$ convergence)]
Suppose $\diffusion=0$ and $\kappa(0)<\infty$. Let $q>0$. 

\begin{enumerate}[(i)]
\item\label{diffusion_zero_finite_variation:Zqs:i} For any $\gamma>\Phi(q)$, there are $\{A_0,h_0\}\subset (0,\infty)$ such that for all $h\in (0,h_0)$ and then all $x\in \Zh^{++}$: $$\left\vert \diffZ\right\vert\leq A_0he^{\gamma x}.$$
\item\label{diffusion_zero_finite_variation:Zqs:ii} For the L\'evy triplet $(0,\delta_{-1},1)$ and the nested sequence $(h_n:=1/2^n)_{n\geq 1}\downarrow 0$, for each $q> 0$ and any $x\in \cup_{n\geq 1}\mathbb{Z}_{h_n}^{++}\cap (0,1)$: $$\lim_{n\to\infty}\frac{\diffZn}{h_n}=\frac{1}{2}q(1+q)xe^{x(1+q)}. $$
\end{enumerate}

\end{proposition}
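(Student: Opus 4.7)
The proof of part~(i) runs in parallel with that of Proposition~\ref{proposition:Wqconvergence:trivial_diffusion_FV}: we start from the integral representation~\eqref{eq:difference:Zqs} and bound each of its three summands \ref{Z:a}, \ref{Z:b}, \ref{Z:c} using the linear coercivity~\ref{(C)} (available since $\diffusion=0$ and $\kappa(0)<\infty$), the estimate~\ref{delta2} of $\vert\psi^h-\psi\vert$, the bounds~\ref{delta1D}--\ref{delta2D} for $A^{h\prime}$, and the technical lemmas of Subsection~\ref{subsection:auxiliary_technical_results}. Term~\ref{Z:a} is handled by a direct tail estimate: the integrand decays as $O(1/\vert\beta\vert^2)$ by~\ref{(C)} combined with the extra $1/\beta$ factor relative to~\eqref{eq:difference:Wqs}, so that the tail over $\vert s\vert>\pi/h$ alone contributes $O(h)$.

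The delicate piece is~\ref{Z:b}: by Lemma~\ref{lemma:convergence:a_uniformly_bounded_family} the prefactor $\tfrac{1}{\beta}\bigl(1-\tfrac{\beta h}{1-e^{-\beta h}}\bigr)$ is $O(h)$ uniformly in $s\in[-\pi/h,\pi/h]$, while $q/(\psi^h-q)$ is only $O(1/\vert\beta\vert)$ by~\ref{(C)}, so a direct $\vert\int\cdot\,ds\vert\leq\int\vert\cdot\vert\,ds$ argument would introduce a spurious $\log(1/h)$. To shave this logarithm off we integrate by parts in $s$, exactly in the spirit of the treatment of~\ref{W:c} in Proposition~\ref{proposition:Wqconvergence:trivial_diffusion_FV}: the boundary terms at $s=\pm\pi/h$ are $O(h^2)$ by~\ref{(C)}, while the residual integrals involving $A^{h\prime}/(\psi^h-q)^2$ tested against $e^{isx}$ are, via Fubini and the explicit form~\eqref{eq:FV:derivative_of_Ah} of $A^{h\prime}$, brought into the shape covered by items~\eqref{lemma:FV:one} and~\eqref{lemma:FV:three} of Lemma~\ref{lemma:FV}. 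Term~\ref{Z:c} mirrors~\ref{W:b} of Proposition~\ref{proposition:Wqconvergence:trivial_diffusion_FV} but is lighter thanks to the additional $1/\beta$ factor in the integrand: decomposing $\psi^h-\psi$ exactly as in items~\ref{item:diffusion>0:first}--\ref{item:diffusion>0:last} of Subsection~\ref{subsection:the_difference} (specialised to $\diffusion=0$) and pairing each summand with $e^{isx}/(\beta\,(\psi^h-q)(\psi-q))$, we reduce, after an interchange of integration order, to further instances of Lemma~\ref{lemma:FV}, whence the contribution is again $O(h)$.

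Part~(ii) is a direct computation: for the triplet $(0,\delta_{-1},1)$ one has $\psi(\beta)=\beta+e^{-\beta}-1$, so $\Zq$ is given by a simple closed form on $(0,1)$, while $\Zq_{h_n}$ on $\mathbb{Z}_{h_n}^{++}\cap(0,1)$ is read off from the recursion~\eqref{equation:recursion:Zq} applied to the chain $X^{h_n}/h_n$, with $h_n=1/2^n$; a Taylor expansion in $h_n$ of the resulting difference then yields the stated limit. The principal technical obstacle throughout part~(i) is, as in Proposition~\ref{proposition:Wqconvergence:trivial_diffusion_FV}, the weakness of the linear coercivity: it forces the use of integration by parts to eliminate the logarithmic losses incurred by naive estimates, and the careful matching of the resulting pieces of $\psi^h-\psi$ (and of the prefactor in~\ref{Z:b}) against the templates of Lemma~\ref{lemma:FV} constitutes the main bookkeeping burden.
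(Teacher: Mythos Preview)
Your treatment of \ref{Z:a}, \ref{Z:c}, and part~(ii) matches the paper's. The issue is with \ref{Z:b}: your proposed integration by parts, ``in the spirit of the treatment of~\ref{W:c}'', necessarily introduces a factor $1/x$ (cf.~\eqref{eq:general_scheme_per_partes}), so the best you can obtain along those lines is $\vert\diffZ\vert\leq A_0(h/x)e^{\gamma x}$. But the proposition claims the bound $A_0he^{\gamma x}$ \emph{uniformly} over all $x\in\Zh^{++}$, in particular for $x$ of size $h$; your estimate degenerates there to $O(1)$. This is precisely the distinction between the $\Wq$ case (where the paper accepts the $1/x$) and the $\Zq$ case (where it does not), and it is the reason Theorem~\ref{theorem:rates_for_scale_functions} allows $G$ to be any bounded subset of $(0,\infty)$, not only one bounded away from zero.

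The paper avoids integration by parts in \ref{Z:b} altogether by the algebraic decomposition
\[
1-\frac{\beta h}{1-e^{-\beta h}}=\Bigl(1-\frac{\beta h}{1-e^{-\beta h}}+\frac{\beta h}{2}\Bigr)-\frac{\beta h}{2}.
\]
The bracketed term is $O((\beta h)^2)$ by Taylor expansion (using \ref{lemma:technical:coercivity:b} of Lemma~\ref{lemma:technical:estimates} for the denominator), so after division by $\beta$ and by $\vert\psi^h-q\vert\geq B_0\vert\beta\vert$ it contributes $O(h^2)$ to the integrand, hence $O(h)$ upon integrating over $[-\pi/h,\pi/h]$. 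The remaining piece $-\tfrac{\beta h}{2}$ gives, after the cancellation of $\beta$, the integral $-\tfrac{qh}{2}\int_{[-\pi/h,\pi/h]}e^{isx}A^h(s)^{-1}ds$, which is $O(h)$ directly by item~\eqref{lemma:FV:one} of Lemma~\ref{lemma:FV} with $l=M=a=0$, $b=1$. No $1/x$ appears. Your diagnosis that a naive triangle inequality produces $O(h\log(1/h))$ is correct; the cure, however, is this second-order expansion of the prefactor rather than integration by parts.
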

\begin{proof} 
With respect to~\ref{diffusion_zero_finite_variation:Zqs:i}, we have as follows. First, \ref{Z:a} of \eqref{eq:difference:Zqs} is of order $O(h)$ by coercivity. Second, in \ref{Z:b}, we employ the decomposition: $$1-\frac{\beta h}{1-e^{-\beta h}}=\left(1-\frac{\beta h}{1-e^{-\beta h}}+\frac{\beta h}{2}\right)-\frac{\beta h}{2}.$$ Then the first term may be estimated via \ref{lemma:technical:coercivity:b} of Lemma~\ref{lemma:technical:estimates} (for the denominator), a Taylor expansions into absolutely and locally uniformly convergent series (for the numerator) and coercivity \ref{(C)}; while to the second term we apply \eqref{lemma:FV:one} of Lemma~\ref{lemma:FV}. It follows that \ref{Z:b} of \eqref{eq:difference:Zqs} is $O(h)$. Finally, when it comes to \ref{Z:c} of \eqref{eq:difference:Zqs}, we have (beyond a non-zero multiplicative constant): 
\begin{equation*}
\int_{[-\pi/h,\pi/h]}e^{isx}\frac{(A(s)-A^h(s))}{A(s)A^h(s)\beta}ds.
\end{equation*}
This can now be seen to be $O(h)$ in the same manner as \eqref{eq:FV:Wqs:another_one} was seen to be so (indeed, one can simply follow, word-for-word, the argument pursuant to \eqref{eq:FV:Wqs:another_one}, and recognize that the substitution of $A(s)A^h(s)\beta$ in place of $A(s)^3$ results in no material change). 

Part~\ref{diffusion_zero_finite_variation:Zqs:ii} follows by a direct computation. 
\end{proof}

\subsection{$\diffusion=0$, infinite variation paths}\label{subsection:sclaes:diffusion0infvariation}\label{subsection:scales:diffusion=0:infinite_variation}
Finally we consider the case when $\diffusion=0$ and $\kappa(0)=\infty$. We assume here that Assumption~\ref{assumption:salient} is in effect. Note also that $\delta^0=1$ and we work under scheme 2. We do not establish sharpness of the rates. 

\begin{proposition}[$\diffusion=0$ \& $\kappa(0)=\infty$]\label{proposition:convergence:diffusion_zero_infinite_variation}
Assume $\diffusion=0$ and Assumption~\ref{assumption:salient}, let $q\geq 0$, $\gamma>\Phi(q)$. Then there are $\{A_0,h_0\}\subset (0,\infty)$ such that for all $h\in (0,h_0)$ and then all $x\in \Zh^{++}$: 
\begin{enumerate}[(i)]
\item\label{diffusion_zero_infinite_variation:one} $\left\vert\diffW\right\vert\leq A_0\frac{h^{2-\epsilon}}{x}e^{\gamma x}$.
\item\label{diffusion_zero_infinite_variation:two} $\left\vert \diffZ\right\vert\leq A_0 h^{2-\epsilon}e^{\gamma x}.$
\end{enumerate}
\end{proposition}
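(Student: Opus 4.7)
My plan follows the three-step framework of Paragraph~\ref{subsubsection:method_for_obtaining_the_rates_of_convergence} applied to the integral representations \eqref{eq:difference:Wqs} and \eqref{eq:difference:Zqs}. Since $\delta^0=1$ in the present case, term~\ref{W:c} of \eqref{eq:difference:Wqs} vanishes outright. The tools at hand are: coercivity~\ref{(C)}, giving $\vert A(s)\vert\land\vert A^h(s)\vert\geq B_0\vert\beta\vert^\epsilon$; the difference bound~\ref{delta2}, $\vert \psi^h-\psi\vert(\beta)\leq A_0[h\xi(h/2)\vert\beta\vert^3+(h+\zeta(h/2))\vert\beta\vert^2]$; the derivative bounds~\ref{delta1D}--\ref{delta2D}, notably $\vert A^{h\prime}(s)\vert\leq A_0\vert\beta\vert^{\epsilon-1}$ and $\vert A^{h\prime}-A'\vert(s)\leq A_0[h+\zeta(h/2)+\xi(h/2)]\vert\beta\vert$; and Remark~\ref{remark:under_assumption_salient_etc}, which under Assumption~\ref{assumption:salient} compresses $h,\zeta(h/2),\xi(h/2)$ into $O(h^{2-\epsilon})$.

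For~\eqref{diffusion_zero_infinite_variation:one} I would integrate terms~\ref{W:a} and~\ref{W:b} by parts via \eqref{eq:general_scheme_per_partes}, which produces the desired $1/x$ prefactor. In~\ref{W:a} the boundary at $\pm\pi/h$ contributes $\frac{1}{x}O(h^\epsilon)$ by coercivity, while the leftover integrand $\vert A'(s)\vert/\vert A(s)\vert^2\leq C\vert\beta\vert^{-\epsilon-1}$ is absolutely integrable on $\{\vert s\vert>\pi/h\}$ (as $\epsilon>1$) and yields $\frac{1}{x}O(h^\epsilon)$. In~\ref{W:b}, setting $\phi(s):=(\psi^h-\psi)(\beta)/[(\psi-q)(\psi^h-q)](\beta)$, the boundary at $\pm\pi/h$ is again $\frac{1}{x}O(h^\epsilon)$ by \ref{delta2} and coercivity. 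Rewriting $\phi'=\frac{A^{h\prime}-A'}{AA^h}-\frac{A^h-A}{AA^h}\bigl(\frac{A'}{A}+\frac{A^{h\prime}}{A^h}\bigr)$ and using \ref{(C)} with \ref{delta1D} shows the logarithmic derivatives $A'/A,A^{h\prime}/A^h$ are $O(\vert\beta\vert^{-1})$; combined with \ref{delta2D} and \ref{delta2}, the first summand is pointwise $O(h^{2-\epsilon}\vert\beta\vert^{1-2\epsilon})$ and the second is $O(h\xi(h/2)\vert\beta\vert^{2-2\epsilon}+(h+\zeta(h/2))\vert\beta\vert^{1-2\epsilon})$. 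The $\vert\beta\vert^{1-2\epsilon}$ pieces are absolutely integrable over $\mathbb{R}$ since $\epsilon>1$, each contributing $O(h^{2-\epsilon})$. The $\vert\beta\vert^{2-2\epsilon}$ piece integrates over $[-\pi/h,\pi/h]$ to $O(h^{2\epsilon-3})$ (with a logarithm at $\epsilon=3/2$ and a constant for $\epsilon>3/2$), and after multiplication by $h\xi(h/2)=O(h^{3-\epsilon})$ produces $O(h^\epsilon)\subseteq O(h^{2-\epsilon})$. Assembling everything and restoring the prefactor $e^{\gamma x}/(2\pi)$ proves~\eqref{diffusion_zero_infinite_variation:one}.

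For~\eqref{diffusion_zero_infinite_variation:two} the extra factor of $1/\beta$ in \eqref{eq:difference:Zqs} makes integration by parts unnecessary. Term~\ref{Z:a} is handled directly: coercivity gives an integrand of $O(\vert\beta\vert^{-\epsilon-1})$, whose integral over $\{\vert s\vert>\pi/h\}$ is $O(h^\epsilon)$. For~\ref{Z:b}, Lemma~\ref{lemma:convergence:a_uniformly_bounded_family} shows $\bigl\vert\frac{1}{\beta}\bigl(1-\frac{\beta h}{1-e^{-\beta h}}\bigr)\bigr\vert\leq Ch$ uniformly for $s\in[-\pi/h,\pi/h]$ and $h$ small, so after multiplication by $q/(\psi^h-q)$ the integrand is $O(h\vert\beta\vert^{-\epsilon})$, which integrates to $O(h)$. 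Term~\ref{Z:c} is controlled by the same envelope $O(h\xi(h/2)\vert\beta\vert^{2-2\epsilon}+(h+\zeta(h/2))\vert\beta\vert^{1-2\epsilon})$ as the second summand of $\phi'$ above, yielding $O(h^{2-\epsilon})$ by the computation just described. Summing and using $h,h^\epsilon\leq h^{2-\epsilon}$ gives~\eqref{diffusion_zero_infinite_variation:two}.

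The principal difficulty lies in the integration by parts for~\ref{W:b}: unlike the finite-variation analogue (Proposition~\ref{proposition:Wqconvergence:trivial_diffusion_FV}), coercivity is of fractional order $\epsilon\in(1,2)$, so the integrals $\int_{[-\pi/h,\pi/h]}\vert\beta\vert^{2-2\epsilon}ds$ and $\int_{\mathbb{R}}\vert\beta\vert^{1-2\epsilon}ds$ behave qualitatively differently according to whether $\epsilon$ lies in $(1,3/2)$ or $(3/2,2)$, with the former diverging at infinity. The balance works out because Assumption~\ref{assumption:salient} ensures $\xi(h/2),\zeta(h/2)=O(h^{2-\epsilon})$, so each potentially divergent integral is paired with a prefactor of the correct order to produce a net $O(h^{2-\epsilon})$; this delicate matching explains why the exponent $\epsilon\in(1,2)$ from Assumption~\ref{assumption:salient} is exactly the right quantity driving the rate. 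Sharpness appears harder to extract from this route, which is consistent with the proposition not asserting it.
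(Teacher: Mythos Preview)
Your proposal is correct and follows essentially the same route as the paper: integration by parts on \ref{W:a} and \ref{W:b} for part~\ref{diffusion_zero_infinite_variation:one}, direct estimation on \ref{Z:a}--\ref{Z:c} for part~\ref{diffusion_zero_infinite_variation:two}, all powered by \ref{(C)}, \ref{delta2}, \ref{delta1D}--\ref{delta2D} and Remark~\ref{remark:under_assumption_salient_etc}. Your decomposition $\phi'=\frac{A^{h\prime}-A'}{AA^h}-\frac{A^h-A}{AA^h}\bigl(\frac{A'}{A}+\frac{A^{h\prime}}{A^h}\bigr)$ is algebraically equivalent to the paper's $\frac{A^2(A^{h\prime}-A')+A'(A-A^h)(A+A^h)}{A^2(A^h)^2}$, and your explicit case split on $\epsilon$ relative to $3/2$ for the $\vert\beta\vert^{2-2\epsilon}$ integral simply makes visible a computation the paper leaves implicit.
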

\begin{proof}

With respect to~\ref{diffusion_zero_infinite_variation:one}, we have as follows. Note that~\ref{W:a} in \eqref{eq:difference:Wqs} is $\frac{1}{x}O(h^\epsilon)$ by an integration by parts argument and \ref{delta1D} from Subsection~\ref{subsection:_the_difference_of_derivatives}. We do the same for~\ref{W:b}: 
\begin{eqnarray*}
&&\frac{d}{ds}\left(e^{isx}\left(\frac{1}{A(s)}-\frac{1}{A^h(s)}\right)\right)=ixe^{isx}\left(\frac{1}{A(s)}-\frac{1}{A^h(s)}\right)+\\
&+&e^{isx}\left(\frac{A^2(s)(A^{h\prime}(s)-A'(s))+A'(s)(A(s)-A^h(s))(A(s)+A^h(s))}{A^2(s)A^h(s)^2}\right).
\end{eqnarray*}
Upon integration, one gets on the left-hand side a contribution of $\frac{1}{x}O(h^\epsilon)$ to \eqref{eq:difference:Wqs}, by coercivity \ref{(C)}. With regard to the rightmost quotient on the right-hand side, we obtain a contribution of order $\frac{1}{x}O(h^{2-\epsilon})$, as follows again by coercivity \ref{(C)}, Remark~\ref{remark:under_assumption_salient_etc}, \ref{delta1D} and the estimates \ref{delta2} and \ref{delta2D}. Remark that $\epsilon> 2-\epsilon$. 

With respect to~\ref{diffusion_zero_infinite_variation:two}, we have as follows. \ref{Z:a} of \eqref{eq:difference:Zqs} is of order $O(h^\epsilon)$ by coercivity \ref{(C)}. Also, \ref{Z:b} is of order $O(h)$. Finally~\ref{Z:c} is of order $O(h^{2-\epsilon})$ immediately, with no need for an integration by parts. 
\end{proof}

\subsection{A convergence result for the derivatives of ${\Wq}$ ($\diffusion>0$)}\label{subsection:functionals_of_scale_fncs_conv} 

\begin{proposition}
Let $q\geq 0$, $\diffusion>0$. Note that $\Wq$ is then differentiable on $(0,\infty)$ \cite[Lemma 2.4]{kuznetsovkyprianourivero}. Moreover, for any $\gamma>\Phi(q)$, there exist $\{A_0,h_0\}\subset (0,\infty)$, such that for all $x\in \Zh^{++}\backslash \{h\}$:
\begin{equation*}
\left\vert\Wqprime(x)-\frac{\Wq_h(x)-\Wq_h(x-2h)}{2h}\right\vert\leq  A_0\frac{e^{\gamma x}}{x}\left(h+\zeta(h/2)+\xi(h/2)h\log(1/h)\right).
\end{equation*}
\end{proposition}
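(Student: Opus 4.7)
The plan is to adapt the analysis of Proposition~\ref{proposition:Wqconvergence:non-trivial_diffusion}~\ref{Wqs:diffusion_positibe:i} from $\Wq$ itself to its derivative. Differentiability of $\Wq$ on $(0,\infty)$ when $\diffusion>0$ is provided by~\cite[Lemma 2.4]{kuznetsovkyprianourivero}. Quadratic coercivity~\ref{(C)} justifies differentiation under the integral sign in~\eqref{eq:integral_represantation_Wq}, yielding $\Wqprime(x) = (2\pi)^{-1}\lim_{T\to\infty}\int_{-T}^{T}\beta e^{\beta x}/A(s)\,ds$ (with $\beta=\gamma+is$). Likewise, by Corollary~\ref{corollary:scale_fncs_laplace-transforms},
\[
\frac{\Wq_h(x)-\Wq_h(x-2h)}{2h} = \frac{1}{2\pi}\int_{-\pi/h}^{\pi/h}\frac{(e^{\beta h}-e^{-\beta h})/(2h)}{A^h(s)}e^{\beta x}\,ds.
\]
Subtracting and following the template of~\eqref{eq:difference:Wqs}, I decompose $e^{-\gamma x}2\pi[\Wqprime(x)-(\Wq_h(x)-\Wq_h(x-2h))/(2h)]$ as $\mathrm{(a')}+\mathrm{(b')}+\mathrm{(c')}$, where $\mathrm{(a')}$ and $\mathrm{(b')}$ are obtained from~\ref{W:a} and~\ref{W:b} of~\eqref{eq:difference:Wqs} by inserting an extra factor $\beta$ in the numerator, and
\[
\mathrm{(c')}:=\int_{[-\pi/h,\pi/h]}e^{isx}\frac{\beta-(e^{\beta h}-e^{-\beta h})/(2h)}{A^h(s)}\,ds.
\]

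This extra factor of $\beta$ renders naive triangle-inequality estimates insufficient, so the key step is to integrate by parts in $s$ as in~\eqref{eq:general_scheme_per_partes}, trading a power of $|\beta|$ for a factor $1/x$. For $\mathrm{(a')}$, IBP produces a boundary term at $|s|=\pi/h$ of size $|\beta|/|A|=O(h)$ by~\ref{(C)} (the contributions at $\pm T$ vanish as $T\to\infty$), and a remainder integrand bounded by $A_0/|\beta|^2$ via~\ref{delta1D} (using $|A'|=O(|\beta|)$), whose tail integral is $O(h)$; thus $|\mathrm{(a')}|=O(h)/x$. For $\mathrm{(c')}$, Lemma~\ref{lemma:technical:estimates}\ref{lemma:technical:estimates:rough:II} bounds the numerator by $A_0 h^2|\beta|^3$, and its $s$-derivative equals $-i(\cosh(\beta h)-1)$, bounded by $A_0 h^2|\beta|^2$ via Lemma~\ref{lemma:technical:estimates}\ref{lemma:technical:estimates:iv}. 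IBP together with~\ref{(C)} and~\ref{delta1D} then yield boundary and remainder contributions both $O(h)$, so $|\mathrm{(c')}|=O(h)/x$.

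The delicate term is $\mathrm{(b')}$. After IBP, the integrand that remains is, up to a factor $1/(ix)$,
\[
\frac{d}{ds}\frac{\beta(\psi^h-\psi)}{A A^h}=\frac{i(\psi^h-\psi)+\beta(A^{h\prime}-A')}{AA^h}-\beta(\psi^h-\psi)\,\frac{A'A^h+A^{h\prime}A}{(AA^h)^{2}}.
\]
Combining the estimate~\ref{delta2} for $|\psi^h-\psi|$, the estimate~\ref{delta2D} for $|A^{h\prime}-A'|$, the derivative bound~\ref{delta1D} ($|A'|,|A^{h\prime}|=O(|\beta|)$), and coercivity~\ref{(C)} ($|A|,|A^h|\gtrsim|\beta|^2$), each of the four summands above is dominated (up to a multiplicative constant) by
\[
h^{2}+\frac{h\xi(h/2)}{|\beta|}+\frac{h}{|\beta|^{3}}+\frac{V\zeta(h/2)}{|\beta|^{2}}.
\]
Integrating over $[-\pi/h,\pi/h]$ gives, respectively, $O(h)$, $O(\xi(h/2)h\log(1/h))$, $O(h)$ and $O(\zeta(h/2))$; the boundary term at $|s|=\pi/h$ is also $O(h)$, since there $|\beta(\psi^h-\psi)|/|AA^h|\lesssim|\beta|\cdot h^{2}|\beta|^{4}/|\beta|^{4}=h^{2}|\beta|=O(h)$ by~\ref{delta2}. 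Summing and dividing by $x$ produces the asserted bound.

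The main obstacle is the logarithmic piece in $\mathrm{(b')}$: IBP is indispensable because without it the bound $|\psi^h-\psi|\leq A_0 h^{2}|\beta|^{4}+\dots$ combined with an extra $|\beta|$ in the numerator yields a divergent $\int_{[-\pi/h,\pi/h]}h^{2}|\beta|\,ds$. Yet even after IBP the integral $\int_{[-\pi/h,\pi/h]}h\xi(h/2)/|\beta|\,ds$ grows like $\log(1/h)$ and cannot be further reduced without generating larger errors elsewhere; this is precisely the origin of the factor $\xi(h/2)h\log(1/h)$ in the final estimate.
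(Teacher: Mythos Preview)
Your proposal is correct and follows essentially the same route as the paper: both derive integral representations for $\Wqprime(x)$ and the central difference quotient, split the resulting difference into a tail piece, a resolvent-difference piece, and a derivative-discretization piece, and then handle each via integration by parts together with coercivity \ref{(C)} and the estimates \ref{delta2}, \ref{delta2D}, \ref{delta1D}. The only minor distinction is that the paper obtains the formula for $\Wqprime$ via the Laplace-transform identity $\widehat{\Wqprime}(\beta)=\beta/(\psi(\beta)-q)$ rather than by differentiating under the integral sign in~\eqref{eq:integral_represantation_Wq} (the latter is valid but, since the resulting integrand decays only like $1/|\beta|$, strictly speaking requires a brief auxiliary justification).
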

\begin{remark}
The case $\diffusion=0$ appears much more difficult to analyze, since the balance between coercivity and the estimates of the differences of Laplace exponents (and their derivatives) worsens. 
\end{remark}
\begin{proof}
First, integration by parts, monotone convergence and the fact that $\Wq(0)=0$ \cite[p. 222, Lemma 8.6]{kyprianou} yield (for $\beta>\Phi(q)$): $\widehat{\Wqprime}(\beta)=\beta\widehat{\Wq}(\beta)=\beta/(\psi(\beta)-q)$. Then analytic continuation, Laplace inversion and dominated convergence allow to conclude, for any $x>0$ that: $$\Wqprime(x)=\frac{1}{2\pi}\int_{-\infty}^\infty\frac{\beta e^{\beta x}}{\psi(\beta)-q}ds.$$ On the other hand, it follows directly from Corollary~\ref{corollary:scale_fncs_laplace-transforms}, that for $h\in (0,h_\star)$ and then $x\in \Zh^{++ }$: $$\frac{\Wq_h(x)-\Wq(x-2h)}{2h}=\frac{1}{2\pi}\int_{-\pi/h}^{\pi/h}e^{\beta x}\left(\frac{e^{\beta h}-e^{-\beta h}}{2h}\right)\frac{ds}{\psi^h(\beta)-q}.$$  


Now, it will be sufficient to estimate the following integrals:
\footnotesize
\begin{equation*}
\int_{(-\infty,\infty)\backslash [-\pi/h,\pi/h]}\beta e^{\beta x}A(s)^{-1}ds;\ \int_{-\pi/h}^{\pi/h}e^{\beta x}\left(\frac{e^{\beta h}-e^{-\beta h}}{2h}-\beta\right)A^h(s)^{-1}ds; \ \int_{-\pi/h}^{\pi/h}\beta e^{\beta x}\left(A(s)^{-1}-A^h(s)^{-1}\right)ds.
\end{equation*}
\normalsize
An integration by parts, coupled with coercivity \ref{(C)} and the boundedness in linear growth of $A'$ \ref{delta1D}, establishes the first of these two integrals as being of order $\frac{1}{x}O(h)$. The same emerges as being true of the second integral, this time using the boundedness in linear growth of $A^{h\prime}$ instead, but also \ref{lemma:technical:estimates:iv} and \ref{lemma:technical:estimates:rough:II} of Lemma~\ref{lemma:technical:estimates}. Finally, with respect to the third integral, again one performs integration by parts, and then uses \ref{delta2}, coercivity \ref{(C)}, the decomposition \eqref{eq:finite_variation:zero} and \ref{delta2D}. The claim follows. 
\end{proof}

\section{Numerical illustrations and concluding remarks}\label{section:numerical_illustrations} 
\subsection{Numerical examples}
We illustrate our algorithm for computing $\sc$, described in Eq.~\eqref{eq:LinRecursion} of the Introduction, in two concrete examples, applying it to determine some relevant quantities arising in applied probability. The examples are chosen with two criteria in mind:
\begin{enumerate}
\item They are natural from the modeling perspective (computation of (Example~\ref{example:log-normal}) ruin parameters in the classical Cram\'er-Lundberg model with log-normal jumps  and (Example~\ref{example:main}) the L\'evy-Kintchine triplet of the limit law of a CBI process). 
\item They do not posses a closed form formula 
for the Laplace exponent of the spectrally negative L\'evy processes. Such examples arise often in practice, making it difficult to apply the standard algorithms for scale functions based on Laplace inversion. Our algorithm is well-suited for such applications.
\end{enumerate}

\begin{example}\label{example:log-normal}
A popular choice for the claim-size modeling in the Cram\'er-Lundberg surplus process is the log-normal distribution \cite[Paragraph I.2.b, Example 2.8]{asmussen}. Fixing the values of the various parameters,  consider the spectrally negative L\'evy process $X$ having $\diffusion=0$; $\measure(dy)=\mathbbm{1}_{(-\infty,0)}(y)\exp(-(\log(-y))^2/2)/(\sqrt{2\pi}(-y))dy$; and (with $V=0$) $\mu=5$ (this satisfies the security loading condition \cite[Section~1.2]{gerber_shiu}). Remark that the log-normal density has fat tails and is not completely monotone.

We complement the computation of $\sc$ by applying it to the calculation of the density $k$ of the deficit at ruin, on the event that $X$ goes strictly above the level $a=5$, before venturing strictly below $0$, conditioned on $X_0=x=2$: $$\EE_x[-X_{\tau_0^-}\in dy,\tau_0^-<\tau_a^+]=k(y)dy$$ ($\tau_0^-$, respectively $\tau_a^+$, being the first entrance time of $X$ to $(-\infty,0)$, respectively $(a,\infty)$). Indeed, $k(y)$ may be expressed as \cite[Theorem 5.5]{gerber_shiu} $k(y)=\int_0^af(z+y)\frac{\sc(x)\sc(a-z)-\sc(a)\sc(x-z)}{\sc(a)}dz$, where $f(y):=\exp(-(\log(y))^2/2)/(\sqrt{2\pi}y)$, $y\in (0,+\infty)$. We approximate the integral $k$ by the discrete sum $k_h$, given for $y\in (0,\infty)$ as follows: \footnotesize $$k_h(y):=h\left[f(y+a)\frac{\sc_h(x)\sc_h(0)}{2\sc_h(a)}+\sum_{k=1}^{a/h-1}f(kh+y)\sc_h(a-kh)\frac{\sc_h(x)}{\sc_h(a)}-\sum_{k=1}^{x/h-1}\sc_h(x-kh)f(kh+y)-\frac{\sc_h(0)f(x+y)}{2}\right].$$ \normalsize  Results are reported in Figure~\ref{figure:log-normal}. 

\begin{figure}
                \includegraphics[width=\textwidth]{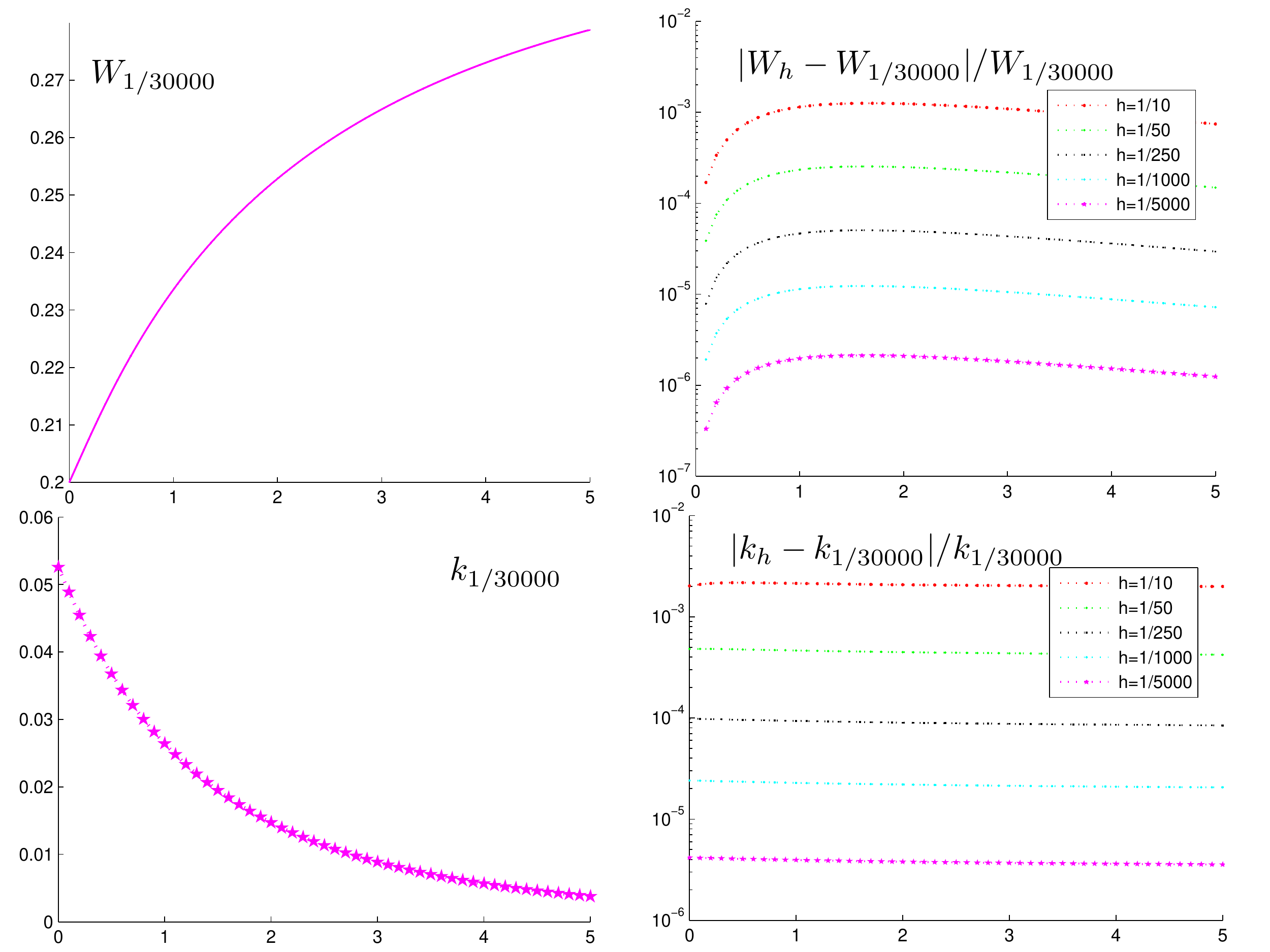}
\caption{The scale function $\sc$ and the density $k$ of the deficit at ruin on the event $\{\tau_0^-<\tau_5^+\}$, for the log-normal Cram\'er-Lundberg process. The relative errors are consistent with the linear order of convergence predicted by Theorem~\ref{theorem:rates_for_scale_functions}. See Example~\ref{example:log-normal} for details.}\label{figure:log-normal}
\end{figure}

\end{example}

\begin{example}\label{example:main}
We take $\diffusion=0$; the L\'evy measure $\measure=\measure_a+\measure_c$ has atomic part $\measure_a=\frac{1}{2}\left(\delta_{-1}+\delta_{-2}\right)$, whilst the density of its absolutely continuous part $\measure_c(dy)=l(y)dy$ is given by: \footnotesize$$l(y)=\frac{3}{2(-y)^{5/2}}\mathbbm{1}_{[-1,0)}(y)+\frac{1}{2(-y-1)^{1/2}}\mathbbm{1}_{[-2,-1)}(y)+\left(\frac{e^{\cos(y)}(3+y\sin(y))}{(-y)^4}+\frac{e}{(-y)^3}\right)\mathbbm{1}_{(-\infty,-1)}(y),\ y\in \mathbb{R};$$ \normalsize and (with $V=1$) $\mu=15$. Remark 
the case is extreme: there are two atoms, while the density is stable-like at $0$, has a fat tail at $-\infty$, and a discontinuity (indeed, a pole; in particular, it is not completely monotone). Furthermore, there is no Gaussian component, and the sample paths of the process have infinite variation. 

We compute $\sc$ for the L\'evy process $X$ having the above characteristic triplet, and complement this with the following application. Let furthermore $X^F$ be an independent L\'evy subordinator, given by $X^F_t=t+Z_t$, $t\in [0,\infty)$, where $Z$ is a compound Poisson process with L\'evy measure $m(dy)=e^{-y}\mathbbm{1}_{(0,\infty)}(y)dy$. Denote the dual $-X$ of $X$ by $X^R$. To the pair $(X^F,X^R)$ there is associated, in a canonical way, (the law of) a (conservative) CBI process \cite{kawazu_watanabe}. The latter process converges to a limit distribution $L$, as time goes to infinity, since $\psi'(0+)>0$ \cite[Theorem~2.6(c)]{keller_mijatovic} and since further the log-moment of $m$ away from zero is finite \cite[Corollary~2.8]{keller_mijatovic}. Moreover, the limit $L$ is infinitely divisible, and: $$-\log\int_0^\infty e^{-ux}dL(x)=u\gamma-\int_{(0,\infty)}(e^{-ux}-1)\frac{k(x)}{x}dx,$$ where $\gamma=b\sc(0)$ vanishes, whilst: $$k(x)=b\sc'(x+)+\int_{(0,\infty)}[\sc(x)-\sc(x-\xi)]m(d\xi),$$ where $b=1$ and $m$ are the drift, respectively the L\'evy measure, of $X^F$ \cite[Theorem~3.1]{keller_mijatovic}. We compute $k$ via approximating, for $x\in \Zh^{++}$, $k(x)$ by $k_h(x)$: $$k_h(x):=b\frac{\sc_h(x)-\sc_h(x-h)}{h}+\sc_h(x-h)m[h/2,\infty)-\sum_{k=1}^{x/h-1}\sc_h(x-kh-h)m[kh-h/2,kh+h/2).$$ Results are reported in Figure~\ref{figure:example:main}.
\end{example}

\begin{figure}
                \includegraphics[width=\textwidth]{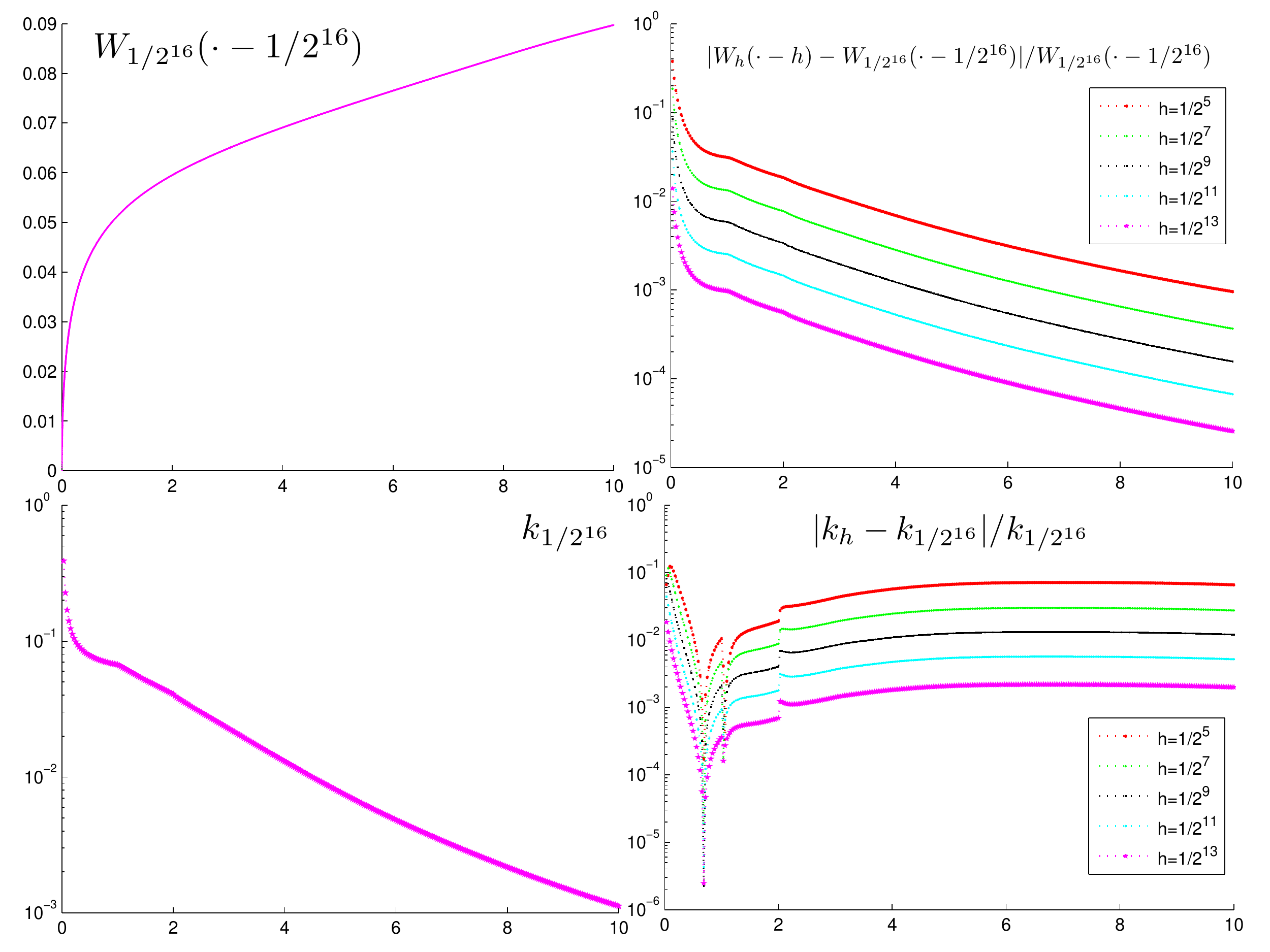}
        \caption{The scale function $\sc$ for the spectrally negative L\'evy process $X$, as described in Example~\ref{example:main}; followed by the $k$-function $k$ of the weak limit (as time goes to infinity) of the CBI process, whose spectrally positive component is the dual $X^R=-X$ of $X$, whilst the L\'evy subordinator part is the sum of a unit drift and a compound Poisson process of unit intensity and mean one exponential jumps.  The relative errors are consistent with the $O(\sqrt{h})$ order of convergence predicted by Theorem~\ref{theorem:rates_for_scale_functions}. See Example~\ref{example:main} for details.
}\label{figure:example:main}
\end{figure}


Let us also mention that we have tested our algorithm on 
very simple processes with completely monotone L\'evy densities \cite{albrecher} (Brownian motion with drift; positive drift minus a compound Poisson subordinator with exponential jumps; spectrally negative stable L\'evy process --- see Appendix~\ref{appendix:further_examples}), and the results were in nice agreement with the explicit formulae which are available for the scale functions in the latter cases.

\subsection{Concluding remarks}




(1) Computational cost. To compute $\Wq_h(x)$ or $\Zq_h(x)$ for some $x\in \Zh$ one effects recursions \eqref{equation:recursion:Wq} and \eqref{equation:recursion:Zq} (as applied to $Y=X^h/h$, $h\in (0,h_\star)$), at a cost of $O((x/h)^2)$ operations (assuming given the parameters of $X^h$). 

\noindent 
(2) Quantity $\Zq_h(x)$ may be obtained from the values of $\Wq_h$ on
$[0,x]\cap \Zh$ at a cost of order $O(x/h)$ operations by a nonnegative summation
(see Proposition~\ref{proposition:scale_functions_for_USF}). 

\noindent 
(3) The computation of the functions $\Wq$, $q\geq 0$, can be reduced, under an exponential change of measure, to the computation of $W$  \cite[p. 222, Lemma~8.4]{kyprianou} for a process having $\Phi(0)=0$ \cite{surya}. Under such an exponential tilting $W_h(x)$ will have a temperate growth \cite[Proposition 4.8(ii)]{vidmar:fluctuation_theory}, since $\Phi^h(0)\to \Phi(0)$. \noindent

Finally, in comparison to the Laplace inversion methods discussed in \cite[Section~5.6]{kuznetsovkyprianourivero}, we note: 
\noindent (1)  Regarding \emph{only} the efficiency of our algorithm (i.e. how costly it is, to achieve a given precision): Firstly, that Filon's method (with Fast Fourier Transform) appears to outperform ours when an explicit formula for the Laplace exponent $\psi$ is known. Secondly, that our method is largely insensitive to the degree of smoothness of the target scale function -- and can match or outperform Euler's, the Gaver-Stehfest and the fixed Talbot's method in regimes when the scale function is less smooth, even as $\psi$ remains readily available (such, at least, was the case for Sets~3 and~4 of \cite[pp. 177-178]{kuznetsovkyprianourivero} --- see Appendix~\ref{appendix:further_examples}). Thirdly, that when $\psi$  is not given in terms of elementary/special function, any Laplace inversion algorithm, by its very nature, must resort to further numerical evaluations of $\psi$ (at complex values of its argument), which hinders its efficiency and makes it hard to control the error. Indeed, such evaluations of $\psi$ appear disadvantageous, as compared to the more innocuous operations required to compute the coefficients present in our recursion. 

\noindent (2) In our method there is only one spatial discretization parameter $h$ to vary. On the other hand, Filon's method (which, when coupled with Fast Fourier Transform, appears the most efficient of the Laplace inversion techniques), has additionally a cutoff parameter in the (complex) Bromwich integral.

\bibliographystyle{plain}
\bibliography{Biblio_scale_functions}

\begin{thebibliography}{10}

\bibitem{abate_unified}
J.~Abate and W.~Whitt.
\newblock {A Unified Framework for Numerically Inverting Laplace Transforms}.
\newblock {\em INFORMS Journal on Computing}, 18(4):408--421, 2006.

\bibitem{albrecher}
H.~Albrecher, F.~Avram, and D.~Kortschak.
\newblock On the efficient evaluation of ruin probabilities for completely
  monotone claim distributions.
\newblock {\em Journal of Computational and Applied Mathematics}, 233(10):2724
  -- 2736, 2010.

\bibitem{asmussen}
S.~Asmussen and H.~Albrecher.
\newblock {\em Ruin Probabilities}.
\newblock Advanced series on statistical science and applied probability. World
  Scientific, 2010.

\bibitem{avram_exitproblems}
F.~Avram, A.~E. Kyprianou, and M.~R. Pistorius.
\newblock {Exit Problems for Spectrally Negative L\'evy Processes and
  Applications to (Canadized) Russian Options}.
\newblock {\em {Annals of Applied Probability}}, 14(1):215--238, 2004.

\bibitem{bertoin}
J.~Bertoin.
\newblock {\em {L\'e}vy Processes}.
\newblock Cambridge Tracts in Mathematics. Cambridge University Press,
  Cambridge, 1996.

\bibitem{bhattacharya}
R.~N. Bhattacharya and E.~C. Waymire.
\newblock {\em A Basic Course in Probability Theory}.
\newblock Universitext - Springer-Verlag. Springer, 2007.

\bibitem{biffis}
E.~Biffis and A.~E. Kyprianou.
\newblock A note on scale functions and the time value of ruin for {L\'e}vy
  insurance risk processes.
\newblock {\em Insurance: Mathematics and Economics}, 46:85--91, 2010.

\bibitem{bingham_branching}
N.~H. Bingham.
\newblock Continuous branching processes and spectral positivity.
\newblock {\em Stochastic Processes and their Applications}, 4(3):217 -- 242,
  1976.

\bibitem{BorodinSalminen}
A.~N. Borodin and P.~Salminen.
\newblock {\em Handbook of {B}rownian motion---facts and formulae}.
\newblock Probability and its Applications. Birkh\"auser Verlag, Basel, second
  edition, 2002.

\bibitem{brunner}
H.~Brunner.
\newblock {\em Collocation Methods for Volterra Integral and Related Functional
  Differential Equations}.
\newblock Cambridge Monographs on Applied and Computational Mathematics.
  Cambridge University Press, 2004.

\bibitem{cardoso}
R.~M.~R. Cardoso and H.~R. Waters.
\newblock Recursive calculation of finite time ruin probabilities under
  interest force.
\newblock {\em {Insurance: Mathematics and Economics}}, 33(3):659 -- 676, 2003.

\bibitem{kyprianou:smoothness}
T.~Chan, A.~E. Kyprianou, and M.~Savov.
\newblock {Smoothness of scale functions for spectrally negative L{\'e}vy
  processes}.
\newblock {\em Probability Theory and Related Fields}, 150(3-4):691--708, 2011.

\bibitem{CohenAM}
A.~M. Cohen.
\newblock {\em Numerical methods for {L}aplace transform inversion}, volume~5
  of {\em Numerical Methods and Algorithms}.
\newblock Springer, New York, 2007.

\bibitem{davies}
B.~Davies.
\newblock {\em Integral Transforms and Their Applications}.
\newblock Texts in Applied Mathematics. Springer, {Third} edition, 2002.

\bibitem{dicksongray}
D.~C.~M. Dickson and J.~R. Gray.
\newblock Approximations to ruin probability in the presence of an upper
  absorbing barrier.
\newblock {\em Scandinavian Actuarial Journal}, 1984(2):105--115, 1984.

\bibitem{dickson}
D.~C.~M. Dickson and H.~R. Waters.
\newblock Recursive calculation of survival probabilities.
\newblock {\em ASTIN Bulletin}, 21(2):199--221, 1991.

\bibitem{evard}
J.~Cl. Evard and F.~Jafari.
\newblock {A Complex Rolle's Theorem}.
\newblock {\em The American Mathematical Monthly}, 99(9):858--861, 1992.

\bibitem{hubalek}
F.~Hubalek and A.~E. Kyprianou.
\newblock {Old and New Examples of Scale Functions for Spectrally Negative
  L\'evy Processes}.
\newblock In R.~Dalang, M.~Dozzi, and F.~Russo, editors, {\em {Seminar on
  Stochastic Analysis, Random Fields and Applications VI}}, Progress in
  Probability, pages 119--145. Springer, 2011.

\bibitem{jury}
E.~I. Jury.
\newblock {\em Theory and application of the z-transform method}.
\newblock Robert E. Krieger Publishing Co., Huntington, New York, 1964.

\bibitem{kawazu_watanabe}
K.~Kawazu and S.~Watanabe.
\newblock {Branching Processes with Immigration and Related Limit Theorems}.
\newblock {\em {Theory of Probability and its Applications}}, 16(1):36--54,
  1971.

\bibitem{keller_mijatovic}
M.~Keller-Ressel and A.~Mijatovic.
\newblock On the limit distributions of continuous-state branching processes
  with immigration.
\newblock {\em Stochastic Processes and Their Applications}, 122:2329--2345,
  2012.

\bibitem{kuznetsovkyprianourivero}
A.~Kuznetsov, A.~E. Kyprianou, and V.~Rivero.
\newblock The theory of scale functions for spectrally negative {L\'e}vy
  processes.
\newblock In {\em {L\'e}vy Matters II}, Lecture Notes in Mathematics, pages
  97--186. Springer Berlin Heidelberg, 2013.

\bibitem{kyprianou}
A.~E. Kyprianou.
\newblock {\em {Introductory Lectures on Fluctuations of {L\'e}vy Processes
  with Applications}}.
\newblock Springer-Verlag, Berlin Heidelberg, 2006.

\bibitem{gerber_shiu}
A.~E. Kyprianou.
\newblock {\em {Gerber-Shiu Risk Theory}}.
\newblock {EEA Series}. Springer, 2013.

\bibitem{lambert}
A.~Lambert.
\newblock Species abundance distributions in neutral models with immigration or
  mutation and general lifetimes.
\newblock {\em {Journal of Mathematical Biology}}, 63(1):57--72, 2011.

\bibitem{linz}
P.~Linz.
\newblock {\em {Analytical and Numerical Methods for Volterra Equations}}.
\newblock Studies in Applied Mathematics. Society for Industrial and Applied
  Mathematics, 1985.

\bibitem{loeffen}
R.~L. Loeffen.
\newblock {On optimality of the barrier strategy in de Finetti's dividend
  problem for spectrally negative L\'evy processes}.
\newblock {\em {Annals of Applied Probability}}, 18(5):1669--1680, 2008.

\bibitem{mijatovicpistorius}
A.~Mijatovi\'c and M.~R. Pistorius.
\newblock {On the drawdown of completely asymmetric L\'evy processes}.
\newblock {\em {Stochastic Processes and their Applications}},
  122(11):3812--3836, 2012.

\bibitem{Scale_Function_Code}
A.~Mijatovi{\'c}, M.~Vidmar, and S.~Jacka.
\newblock {M}atlab code for the scale function algorithm in
  {E}quation~\eqref{eq:LinRecursion} above, 2013.
\newblock Available at
  \url{http://www.ma.ic.ac.uk/~amijatov/Abstracts/ScaleFun_Levy.html}.

\bibitem{vidmarmijatovicsaul}
A.~Mijatovi{\'c}, M.~Vidmar, and S.~Jacka.
\newblock Markov chain approximations for transition densities of {L\'e}vy
  processes.
\newblock {\em Electronic Journal of Probability}, 19(7):1--37, 2014.

\bibitem{panjer}
H.~H. Panjer and S.~Wang.
\newblock On the stability of recursive formulas.
\newblock {\em ASTIN Bulletin}, 23(2):227--258, 1993.

\bibitem{revuzyor}
D.~Revuz and M.~Yor.
\newblock {\em Continuous Martingales and Brownian Motion}.
\newblock Grundlehren der Mathematischen Wissenschaften. Springer-Verlag,
  Berlin Heidelberg, 1999.

\bibitem{sato}
K.~I. Sato.
\newblock {\em {L\'e}vy Processes and Infinitely Divisible Distributions}.
\newblock Cambridge studies in advanced mathematics. Cambridge University
  Press, Cambridge, 1999.

\bibitem{surya}
B.~A. Surya.
\newblock {Evaluating Scale Functions of Spectrally Negative L\'evy Processes}.
\newblock {\em Journal of Applied Probability}, 45(1):135--149, 2008.

\bibitem{vidmar:fluctuation_theory}
M.~Vidmar.
\newblock {Fluctuation theory for upwards skip-free L\'evy chains}.
\newblock arXiv:1309.5328 [math.PR], 2013.

\bibitem{vylder}
F.~De Vylder and M.~J. Goovaerts.
\newblock Recursive calculation of finite-time ruin probabilities.
\newblock {\em Insurance: Mathematics and Economics}, 7(1):1--7, 1988.

\end{thebibliography}

\appendix

\section{Proofs of lemmas from Paragraph~\ref{subsection:some_technical_estimates_and_bounds}}\label{appendix:technical_lemmas}
\begin{proof}(Of Lemma~\ref{lemma:technical:estimates}.)
Assertions \ref{lemma:technical:estimates:i}-\ref{lemma:technical:estimates:rough:II} obtain at once by expansion into Taylor series which converge absolutely and locally uniformly. \ref{lemma:technical:coercivity:xx} follows from the equality $1-\cos(u)=2\sin^2(u/2)$ and the concavity of $\sin\vert_{[0,\xi/2]}$. Then \ref{lemma:technical:coercivity:a} is got from (letting $u=\gamma_0+is_0$, $\{\gamma_0, s_0\}\subset \mathbb{R}$): $$\left\vert \frac{1}{u^2}\left(\cosh u-1\right)\right\vert^2=\frac{\left[(\cosh\gamma_0-1)+(1-\cos s_0)\right]^2}{(\gamma_0^2+s_0^2)^2}\geq \frac{(\gamma_0^2/2+1-\cos s_0)^2}{(\gamma_0^2+s_0^2)^2},$$ since $\cosh\vert_{\mathbb{R}}$, upon expansion into a power series, is a nondecreasing limit of its partial sums. Finally, \ref{lemma:technical:coercivity:b} obtains from (again $u=\gamma_0+is_0$, $\{\gamma_0, s_0\}\subset \mathbb{R}$): $$\left\vert \frac{1}{u}\left( e^u-1\right)\right\vert^2=\frac{(e^{\gamma_0}-1)^2+2e^{\gamma_0}(1-\cos s_0)}{\gamma_0^2+s_0^2},$$ noting that $\vert e^{\gamma_0}-1\vert\geq  B\vert\gamma_0\vert$ for all $\gamma_0\geq L$, for some $B>0$, whereas $\exp\vert_{[L,\infty)}$ is itself bounded away from zero.
\end{proof}

\begin{proof}(Of Lemma~\ref{lemma:convergence:a_uniformly_bounded_family}.)
Expressing $f_{\gamma_0}(u):=\frac{1-e^{-\gamma_0-is_0}-(\gamma_0+is_0)}{(1-e^{-\gamma_0-is_0})(\gamma_0+is_0)}$, use \ref{lemma:technical:coercivity:b} of Lemma~\ref{lemma:technical:estimates} in the denominator, and expansion into Taylor series which converge absolutely and locally uniformly in the numerator. 
\end{proof}

\begin{proof}(Of Lemma~\ref{lemma:fundamental:inequalities}.)
The first inequality follows from (writing $z=\gamma_0+is_0$, $\{\gamma_0,s_0\}\subset \mathbb{R}$): $$\vert e^{z}-1\vert^2=(e^{\gamma_0}-1)^2+2e^{\gamma_0}(1-\cos s_0).$$ Then, since $\gamma_0\leq 0$, $e^{\gamma_0}\leq 1$ and $1-e^{\gamma_0}\leq -\gamma_0$ (by comparing derivatives). Finally, use $1-\cos s_0\leq s_0^2/2$.

The second inequality obtains from the relation (for $\{\gamma_0,s_0\}\subset \mathbb{R}$): $$e^{\gamma_0+is_0}-(\gamma_0+is_0)-1=e^{\gamma_0}(\cos(s_0)-1)+(e^{\gamma_0}-\gamma_0-1)+i(e^{\gamma_0}-1)\sin(s_0)+i(\sin(s_0)-s_0),$$ noting in addition, that $e^{\gamma_0}-\gamma_0-1\leq \gamma_0^2/2$ for $\gamma_0\leq 0$ (compare derivatives), $\vert \sin(s_0)\vert\leq \vert s_0\vert$ and finally $\mathrm{sgn}(s_0)(s_0-\sin(s_0))\leq (\vert s_0\vert^3/6)\land (2\vert s_0\vert)\leq 2s_0^2$.
\end{proof}

\begin{proof}(Of Lemma~\ref{lemma:technical:2ndorder}.)
Apply the complex Mean Value Theorem \cite[p. 859, Theorem 2.2]{evard}.
\end{proof}

\section{Proofs for Paragraph~\ref{subsection:some_asymptotic_properties_at_zero}}\label{appendix:some_asymptotic_properties_of_measures_on_R}

\begin{proof}(Of Proposition~\ref{appendix:proposition:integrability:main}.)
It is Fubini's Theorem that: $$I:=\int_1^\infty \frac{ds}{s^2}\int \nu(dx)(1-\cos(sx))=\int \nu(dx)\int_1^\infty \frac{ds}{s^2}(1-\cos(sx)).$$ Next, for each $x\in \mathbb{R}$, do integration by parts for the integral $\int_1^\infty \frac{ds}{s^2}(1-\cos(sx))$; first, $$\frac{d}{ds}\left(\frac{1- \cos(sx)}{s}\right)=-\frac{1-\cos(sx)}{s^2}+x\frac{\sin(sx)}{s};$$ second, integrate from $1$ to $N$ against $ds$; third, let $N\to\infty$ and use the Monotone Convergence Theorem. We obtain: $$I=\int\nu(dx)\left(x\int_1^\infty\frac{\sin(sx)}{s}ds+(1-\cos(x))\right).$$ The inner integral is, of course, in the improper Riemann sense; now change variables in the latter to get: $$I=\int\nu(dx)\left(\vert x\vert\int_{\vert x\vert}^\infty\frac{\sin(u)}{u}du+(1-\cos(x))\right).$$ Since the sine integral is bounded, $1-\cos(x)\leq x^2/2$ and $\nu$ is compactly supported, it follows that $I<\infty$ whenever $\int\vert x\vert \nu(dx)<\infty$. Conversely, if $\int \vert x\vert \nu(dx)=\infty$, since $\int_{\vert x\vert}^\infty\frac{\sin(u)}{u}du\to \int_0^\infty \frac{\sin(u)}{u}du\in (0,\infty)$ as $x\to 0$, we deduce $I=\infty$ by the local finiteness of $\nu$ in $\mathbb{R}\backslash \{0\}$. 
\end{proof}

\begin{proof}(Of Lemma~\ref{lemma:fubini}.)
Fubini's Theorem.
\end{proof}

\begin{proof}(Of Proposition~\ref{proposition:generally_on_measure_asymptotics}.)
Apply Lemma~\ref{lemma:fubini}.
\end{proof}

\begin{proof}(Of Proposition~\ref{proposition:generally_on_measure_asymptotics_bis}.)
The first assertion follows at once from \ref{fubini:three} of Lemma~\ref{lemma:fubini} (applied to the measure $\nu$). The second follows from the first and \ref{fubini:two} of Lemma~\ref{lemma:fubini} applied to the measure $\lambda$. Now we consider $\int (e^{isy}-1)\lambda(dy)$. It is assumed without loss of generality that $\nu$ is supported by the positive half-line and $s>0$. Furthermore, there is an $A\in (0,\infty)$, with $\hat{\lambda}(u):=\lambda(u,1)\leq A/u^\alpha$ for all $u\in (0,1)$.

Consider the imaginary part of $\int (e^{isy}-1)\lambda(dy)$ first. Fubini and dominated convergence yield: 
\footnotesize 
$$\int \lambda(dy)\sin(sy)=s\int \lambda(dy)\int_0^y\cos(su)du=s\int_0^1du\cos(su)\hat{\lambda}(u)=s\sum_{k=0}^\infty \int_{(0,1)\cap [k\pi/2,k\pi+\pi/2)}du\cos(su)\hat{\lambda}(u)=:I(s).$$ 
\normalsize
We thus obtain an alternating series, whence: $\vert I(s)\vert\leq s \int_0^{3\pi/(2s)}du\hat{\lambda}(u)$. This is crucial. Indeed, taking into account that $\hat{\lambda}(u)\leq A/u^\alpha$ throughout the integration region, we now obtain immediately  $\sup_{s>0}\vert I(s)\vert /s^\alpha<\infty$.

The real part is treated in a similar vein. First, Fubini yields: $$\int \lambda(dy)(1-\cos(sy))=s\int \lambda(dy)\int_0^y\sin(su)du=s\int_0^1du\sin(su)\hat{\lambda}(u)$$ and the remaining steps are very similar and hence omitted. 
\end{proof}

\section{More numerical examples}\label{appendix:further_examples}

\begin{figure}[!htb]
\includegraphics[width=1\textwidth]{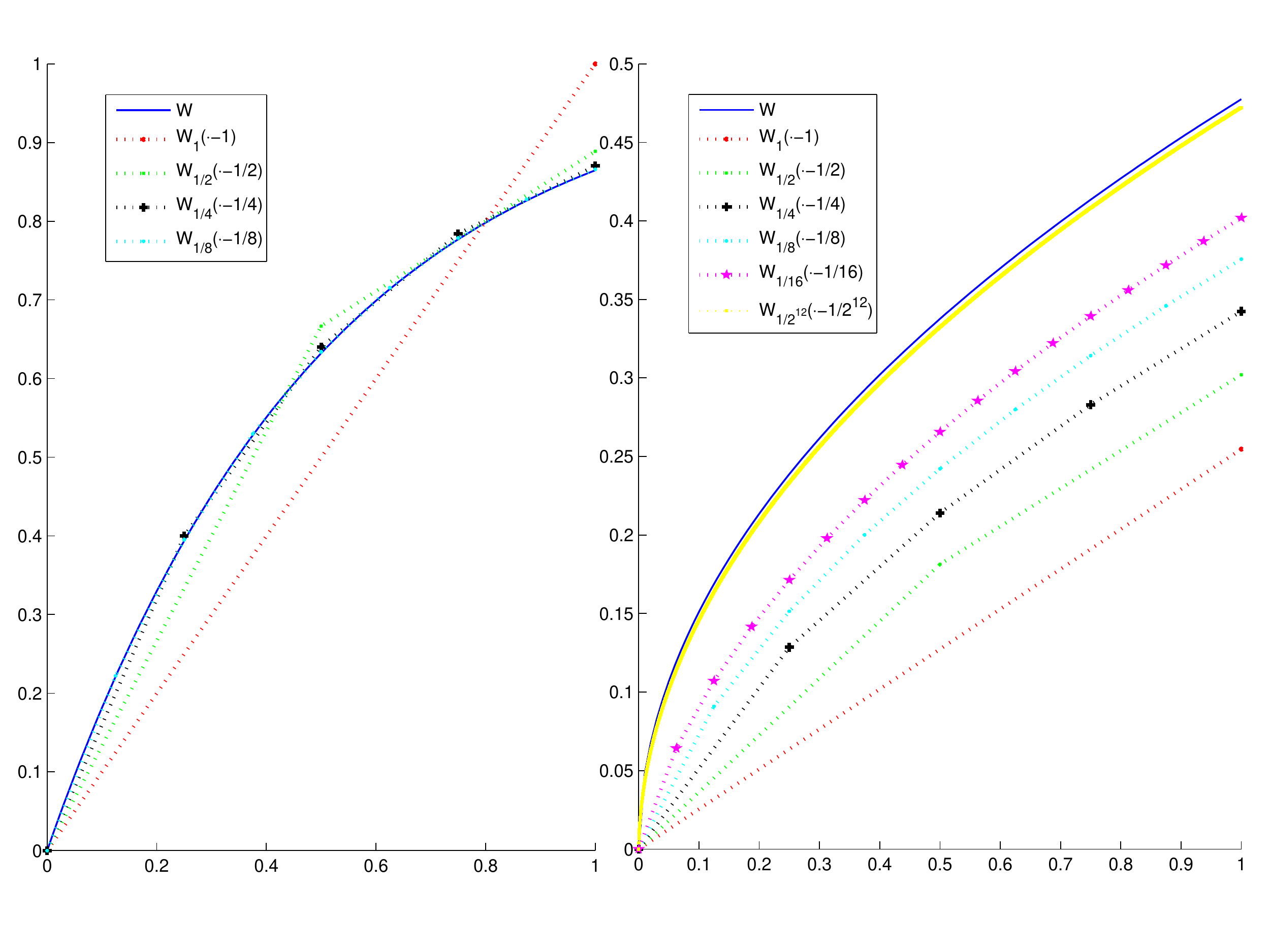}
\caption{Left: Brownian motion with drift ($\diffusion=1$, (with $V=0$) $\mu=1$, $\measure=0$). Right: a stable process ($\diffusion=0$, (with $V=1$) $\mu=1/(\beta-1)$, $\measure(dy)=\frac{1}{(-y)^{1+\beta }}\mathbbm{1}_{(-\infty,0)}(y)dy$, $\beta=3/2$). Note the quadratic, respectively `square-root', order of convergence.}
\label{fig:W_convergence_BM_drift}
\end{figure}

\begin{example}\label{example:compare}
This example is taken from \cite[pp. 177-178; Eq.~(137), Table~4 (Sets~3 and~4)]{kuznetsovkyprianourivero}; with two \emph{caveats}: (i) parameter $c_4$ is set equal to $e$ in Set~3, since there appears to be a typo in the last term of the expression for the Laplace exponent \cite[p. 178, Equation~(138)]{kuznetsovkyprianourivero} and presumably the form of the Laplace exponent as given (\emph{sic}), is the one which was used in the actual computations of \cite{kuznetsovkyprianourivero}; (ii) our parameter $\mu$ (with $V=1$) takes on such values as to ensure equality of Laplace exponents (i.e. laws of the processes) between us and \cite{kuznetsovkyprianourivero} for each of the two sets. We perform the computation for the function $W^{(1/2)}$ on the decreasing sequence $(h_n=1/(20\cdot n))_{n\in \{5,20,80,320\}}$.  $\sc_{h_{2000}}^{(1/2)}(\cdot-\delta_0h_{2000})$ is taken as a benchmark, and this gives $\text{max rel}_n:=\max_{i\in [100]}\frac{\vert W_{h_n}^{(1/2)}(x_i-\delta_0h_n)-W_{h_{2000}}^{(1/2)}(x_i-\delta_0h_{2000})\vert}{W_{h_{2000}}^{(1/2)}(x_i-\delta_0h_{2000})}$, where $x_i=i/20$, $i\in [100]$ . Note the linear order of convergence.
\vspace{0.25cm}

\begin{center}
\begin{tabular}{|c|c|c|c|c|}\hline
$n$ & $5$ & $20$ & $80$ & $320$  \\\hline
$\text{max rel}_n$ (Set 3) & $ 0.0011$ & $3.4086\cdot 10^{-5}$ & $6.6513\cdot 10^{-6}$ & $1.2391\cdot 10^{-6}$  \\\hline
$\text{max rel}_n$ (Set 4) & $0.0121$ & $0.0031$ & $7.5203\cdot 10^{-4}$ & $1.6512\cdot 10^{-4}$ \\\hline 
\end{tabular}
\label{table:sets}
\end{center}

\end{example}

\begin{figure}[!htb]
\includegraphics[width=\textwidth]{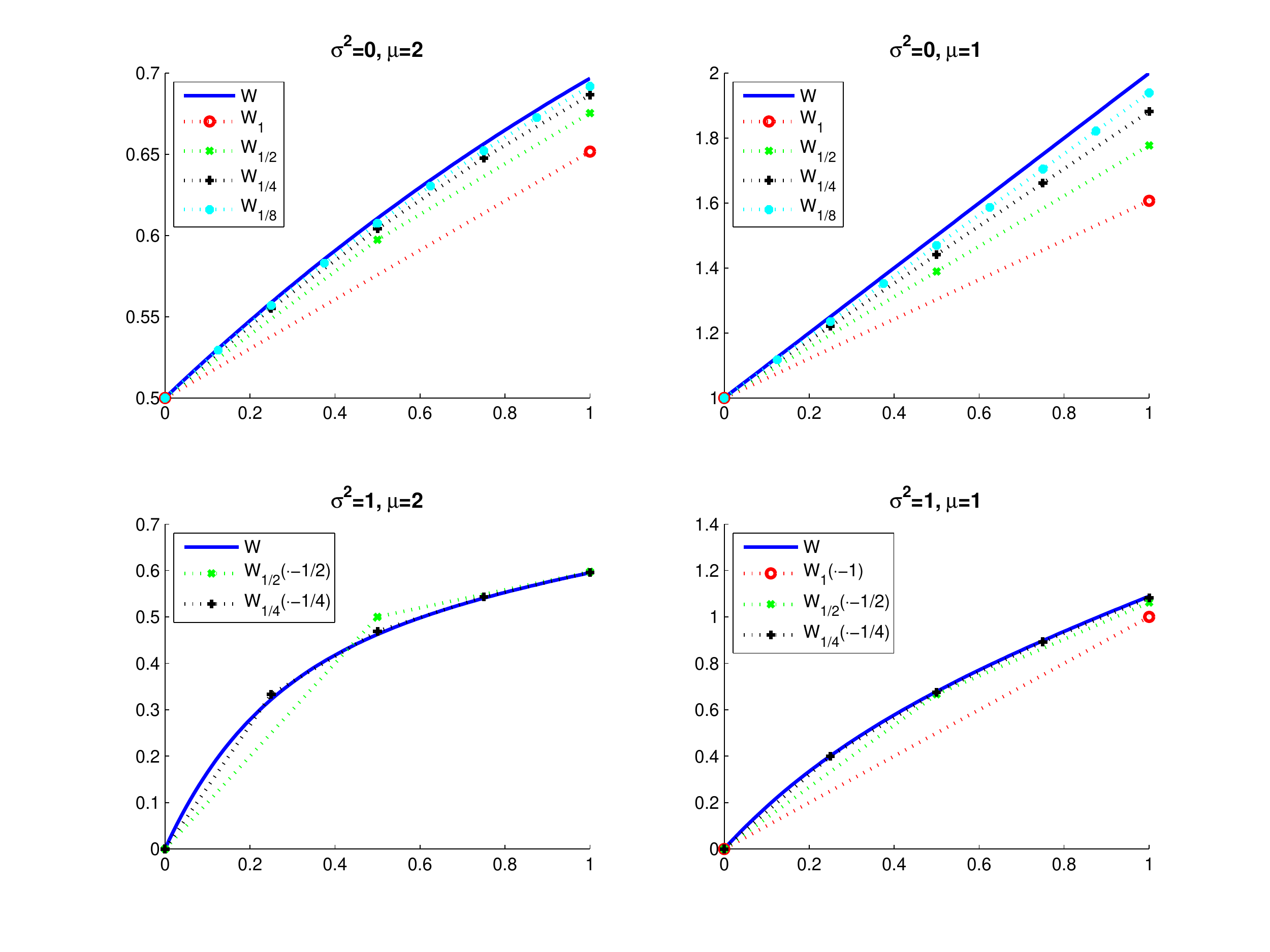}
\caption{Exponential jumps model ($\measure(dy)=a\rho e^{\rho y}\mathbbm{1}_{(-\infty,0)}(y)dy$, $a=\rho=1$, $\diffusion=0$ and (with $V=0$) $\mu\in \{2,1\}$). Note the linear order of convergence. }
\label{fig:W_convergence_Exp_jumps}
\end{figure}


\end{document}